\numberwithin{equation}{section}
\newtheorem{theorem}{Theorem}[section]
\newtheorem{atheo}{Theorem}
\newtheorem{claim}[theorem]{Claim}
\newtheorem{lemma}[theorem]{Lemma}
\newtheorem{proposition}[theorem]{Proposition}
\newtheorem{corollary}[theorem]{Corollary}
\newtheorem{remark}[theorem]{Remark}
\newtheorem{definition}[theorem]{Definition}
\long\def\xcom#1{}
\newcommand{\cA}{{\ensuremath{\mathcal A}} }
\newcommand{\cC}{{\ensuremath{\mathcal C}} }
\newcommand{\cE}{{\ensuremath{\mathcal E}} }
\newcommand{\cF}{{\ensuremath{\mathcal F}} }
\newcommand{\cL}{{\ensuremath{\mathcal L}} }
\newcommand{\cO}{{\ensuremath{\mathcal O}} }
\newcommand{\cS}{{\ensuremath{\mathcal S}} }
\newcommand{\cV}{{\ensuremath{\mathcal V}} }
\newcommand{\cW}{{\ensuremath{\mathcal W}} }
\newcommand{\gep}{\varepsilon}       
\renewcommand{\tilde}{\widetilde}          
\DeclareMathSymbol{\leqslant}{\mathalpha}{AMSa}{"36} 
\DeclareMathSymbol{\geqslant}{\mathalpha}{AMSa}{"3E} 
\DeclareMathSymbol{\eset}{\mathalpha}{AMSb}{"3F}     
\newcommand{\R}{\mathbb{R}}
\newcommand{\Z}{\mathbb{Z}}
\newcommand{\N}{\mathbb{N}}
\def\bs{\boldsymbol}
\DeclareMathOperator{\sign}{sign}
\newcommand\bP{\ensuremath{\bs{\mathrm{P}}}}
\newcommand\bE{\ensuremath{\bs{\mathrm{E}}}}
\newcommand{\ind}{{\sf 1}}
\renewcommand{\epsilon}{\varepsilon}
\newenvironment{myitemize}{%
\begin{list}{$\bullet$}%
        {%
        \setlength{\itemsep}{0.4em}%
        \setlength{\topsep}{0.5em}%
        \setlength\leftmargin{2.45em}%
        \setlength\labelwidth{2.05em}%
        \setlength{\labelsep}{0.4em}%
        }%
        }%
{\end{list}}
\renewenvironment{itemize}{
\begin{myitemize}}%
{\end{myitemize}}
 \newcommand{\be}[1]{\begin{equation}\label{#1}}
 \newcommand{\ee}{\end{equation}}
 \newcommand{\bl}[1]{\begin{lemma}\label{#1}}
 \newcommand{\el}{\end{lemma}}
 \newcommand{\br}[1]{\begin{remark}\label{#1}}
 \newcommand{\er}{\end{remark}}
 \newcommand{\bt}[1]{\begin{theorem}\label{#1}}
 \newcommand{\et}{\end{theorem}}
 \newcommand{\bd}[1]{\begin{definition}\label{#1}}
 \newcommand{\ed}{\end{definition}}
 \newcommand{\bcl}[1]{\begin{claim}\label{#1}}
 \newcommand{\ecl}{\end{claim}}
 \newcommand{\bp}[1]{\begin{proposition}\label{#1}}
 \newcommand{\ep}{\end{proposition}}
 \newcommand{\bc}[1]{\begin{corollary}\label{#1}}
 \newcommand{\ec}{\end{corollary}}
 \newcommand{\bpr}{\begin{proof}}
 \newcommand{\epr}{\end{proof}}
 \newcommand{\bi}{\begin{itemize}}
 \newcommand{\ei}{\end{itemize}}
\newcommand{\esp}[1]{\mathbb{E}\etc{#1}}
\newcommand{\ens}[1]{\left\{#1\right\}}
\newcommand{\prob}[1]{\mathbb{P}\etp{#1}}
\newcommand{\etc}[1]{\left [#1 \right ]}
\newcommand{\etp}[1]{\left (#1 \right )}
\newcommand{\valabs}[1]{\left|#1 \right|}
\newcommand{\unsur}[1]{\frac{1}{#1}}
\newcommand{\probmubeta}[1]{\bP_{\beta,\mu_\beta}\etp{#1}}
\newcommand{\undemi}{\frac12}
\newcommand{\un}[1]{1_{\etp{#1}}}
\date{\today}
\begin{document}

\begin{frontmatter}
\title{A shape theorem for the scaling limit of the IPDSAW at criticality
}
\runtitle{Shape Theorem for critical IPDSAW}

\begin{abstract} \footnote{\today}

In this paper we give a complete characterization of the 
scaling limit of the critical Interacting Partially Directed Self-Avoiding Walk (IPDSAW) introduced in \cite{ZL68}.  As the system size $L\in \N$ diverges, 
we prove that the set of occupied sites, rescaled horizontally by $L^{2/3}$ and vertically by $L^{1/3}$  converges in law for the Hausdorff distance towards a
non trivial random set. This limiting set is built with a Brownian motion $B$ conditioned to come back at the origin at $a_1$ the time at which its  
geometric area reaches $1$. The modulus of $B$ up to $a_1$ gives the height of the limiting set, while 
its center of mass process is an independent Brownian motion.

Obtaining the shape theorem requires to derive a functional central limit theorem for the excursion of a random walk with Laplace symmetric increments conditioned 
on sweeping a prescribed geometric area.  This result is proven in a companion paper 
\cite{CarPet17b}.
%

%
%
  
\end{abstract}

\author{\fnms{Philippe}
  \snm{Carmona}\corref{}\ead[label=e1]{philippe.carmona@univ-nantes.fr}
\ead[label=u1,url]{http://www.math.sciences.univ-nantes.fr/~carmona/}}


\and
\author{\fnms{Nicolas} \snm{Pétrélis}\ead[label=e2]{nicolas.petrelis@univ-nantes.fr}}
\affiliation{Université de Nantes}
\affiliation{Université de Nantes 
and Universidad de Chile}
\affiliation{Université de Nantes}

\address{Laboratoire de Math\'ematiques Jean Leray UMR 6629\\
Universit\'e de Nantes, 2 Rue de la Houssini\`ere\\
BP 92208, F-44322 Nantes Cedex 03, France\\ \printead{e1}\\\printead{e2}
}

\runauthor{P. Carmona et al.}

\begin{keyword}[class=MSC]
\kwd[Primary ]{60K35}
\kwd[; Secondary ]{82B26}
\kwd{82B41}
\end{keyword}

\begin{keyword}
\kwd{Polymer collapse}
\kwd{phase transition}
\kwd{shape theorem}
\kwd{Brownian motion}
\kwd{Local limit theorem}
\end{keyword}

\thanks{{\it Acknowledgements.}  The author thanks the Centre Henri Lebesgue ANR-11-LABX-0020-01 for creating an attractive mathematical environment.}

\end{frontmatter}

\tableofcontents

\section{Introduction and results}

Deriving the scaling limit of a polymer model at its critical point is a difficult issue that had been tackled so far in \cite{DGZ05} or in \cite{S13} for wetting models  and in \cite{CD09} for a Laplacian pinning-model.  With the present paper, we display the scaling limit 
of the critical-IPDSAW. It is a Shape Theorem whose limiting object is a truly $2$-dimensional random set.


\subsection{The model}\label{s11}

The interacting partially directed self-avoiding walk (IPDSAW)  is a \emph{self-avoiding} random walk on $\mathbb{Z}^2$ that only takes unitary steps \emph{upwards, downwards and to the right}. Thus, the set of allowed $L$-step paths is
\begin{align}\label{defWL}
\nonumber\mathcal{W}_L=\{w=(w_i)_{i=0}^L\in(\mathbb{N}_0\times\mathbb{Z})^{L+1}:\,&w_0=0,\, w_L-w_{L-1}=\rightarrow,\\
\nonumber&w_{i+1}-w_i\in\{\uparrow,\downarrow,\rightarrow\}\;\, \forall 0\leq i<L-1,\\
\nonumber&w_i\neq w_j\;\,\forall i<j\}.
\end{align}
Any non-consecutive vertices of the walk though adjacent on the lattice are called \textit{self-touchings} and an energetic reward $\beta\geq0$ is assigned to each trajectory for each self-touching. Thus, every random walk trajectory $w=(w_i)_{i=0}^L\in\mathcal{W}_L$ is associated with the Hamiltonian 
\begin{equation}\label{eq:Hal}
H_{L}(w):=\sum_{\substack{i,j=0\\i<j-1}}^L\mathbf{1}_{\{\lVert w_i-w_j\rVert=1\}},
\end{equation}
which allows us to define $P_{L,\beta}$ the polymer law in size $L$ as, 
\begin{equation}\label{polmes}
P_{L,\beta}(w)= \frac{ e^{\beta H_{L}(w)}}{Z_{L,\beta}}, \quad w\in \cW_L, 
 \end{equation}
where $Z_{L,\beta}$ is the normalizing constant known as the partition function of the system. The exponential growth rate of the partition function is captured by the 
free energy of the model, i.e., 
$f(\beta)=\lim_{L\to \infty}\frac{1}{L} \log Z_{L,\beta}$.
\smallskip

 The IPDSAW undergoes a collapse transition at some
$\beta_c$ that is explicitly known  (see e.g. \cite{BGW92} or \cite[Theorem 1.3]{NGP13}) and  the phase diagram is partitioned into an extended phase $\cE=[0,\beta_c)$ inside which the free energy is larger than $\beta$
and a collapsed phase $\cC=[\beta_c,\infty)$ where the free energy equals $\beta$.  The asymptotics of the free energy close to criticality are
analyzed in \cite[Theorem B]{CNGP13} where the phase transition is proven to be second order with a critical exponent $3/2$, i.e., 
$f(\beta_c-\epsilon)=\beta_c-\epsilon+\gamma \epsilon^{3/2}+o(\epsilon^{3/2})$ where the pre-factor $\gamma$  is closely related with a
continuous model built with Brownian trajectories that are penalized energetically depending on their geometric area.  

In \cite{CNGP13} and \cite{CP15}, a rather complete description of the main geometric features of a typical path sampled from $P_{L,\beta}$   is provided 
inside the extended phase $(\beta<\beta_c)$ and inside the collapsed phase $(\beta>\beta_c)$ (see the discussion in Section \ref{discuss} below).  
However, the scaling limit of the model at criticality ($\beta=\beta_c$), which is the most delicate case, was still to be derived and this is the object of the present paper.

\subsection{Main result: the limiting shape of IPDSAW at criticality}

We identify each  $w\in \Omega_L$ with a connected compact subset of $\R^2$ denoted by $S(\omega)$ that extends the sites of $\Z^2$
occupied by $w$ to squares of length $1$, i.e.,  
\be{defS}
S(w)=\Big\{ \cup_{i=0}^{L} w(i)+\big[-\tfrac12,\tfrac12\big]^2\Big\}, \quad w\in \cW_L.
\ee 
For
$(v_1,v_2)\in (0,\infty)^2$, we let $T_{v_1,v_2}$ be a rescaling operator that acts on $\mathfrak S$ the set of closed subsets of $\R^2$ endowed with the Hausdorff distance. For $S\subset \R^2$, the set $T_{v_1,v_2}(S)$ is
obtained after rescaling $S$ by $v_1$ horizontally and by $v_2$ vertically, i.e.,
\be{defscal}
T_{v_1,v_2}(S)=\bigg\{ \bigg (\frac{x}{v_1},\frac{y}{v_2} \bigg)\colon \, (x,y)\in S  \bigg\}.
\ee
For  $(\alpha_{1},\alpha_{2})$ in $[0,1]$, we denote by 
$Q_{L,\beta}^{\alpha_1,\alpha_2}$ the law of $T_{L^{\alpha_{1}},L^{\alpha_{2}}}(S(w))$ seen as a random variable 
on $\mathfrak{S}$ endowed with its Borel $\sigma$-algebra and when $w$ is sampled from $P_{L,\beta}$. 

With Theorem \ref{Theo-shape} 
below, we prove that, at criticality, the IPDSAW rescaled in time by $L^{2/3}$ and in space by $L^{1/3}$ converges in distribution towards a non trivial random set, built 
with the help of two independent Brownian motions.

%
%
\begin{atheo}[Shape Theorem]\label{Theo-shape}
For $\beta=\beta_c$, we have 
\begin{equation}
Q_{L,\beta}^{\frac{2}{3},\frac{1}{3}}\xrightarrow[L\to \infty]{d} \cS_{\text{crit}}(B,D)
\end{equation} 
with $\cS_{\text{crit}}(B,D)$ a random subset of $\R^2$ defined as
\begin{equation}\label{defcS}
\cS_{\text{crit}}(B,D)=\bigg\{(x,y)\in [0,a_1]\times \R\colon\,  D_x-\frac{|B_x|}{2}\leq y \leq D_x+\frac{|B_x|}{2}\bigg\}
\end{equation}
where
$B$ and $D$ are independent Brownian motions of variance $\sigma_\beta^2$ (defined below \eqref{lawP}), where $a_1$ is the time at which the geometric area described by $B$ reaches $1$, that is,  $\int_0^{a_1} |B_u| du=1$ and with $B$ conditioned  on the 
event $B_{a_1}=0$.
\end{atheo}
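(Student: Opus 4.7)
The strategy is to reduce the shape theorem to the functional CLT of the companion paper \cite{CarPet17b} and then upgrade that path-space convergence to Hausdorff convergence of the rescaled planar occupied set. I would begin by using the classical bijection between $w\in\cW_L$ and its sequence of signed vertical stretches $(\ell_1,\dots,\ell_N)\in\Z^N$, where $N$ is the number of horizontal steps and $N+\sum_{n=1}^N|\ell_n|=L$. The Hamiltonian $H_L(w)$ rewrites as a sum of local terms $(\ell_n\wedge\ell_{n+1})^{-}$ counting self-touchings between oppositely oriented consecutive stretches, and a Cram\'er-type tilt at $\beta_c$ then identifies $P_{L,\beta_c}$, conditionally on $N$, with the law of an excursion of a random walk $(S_n)_{n=0}^N$ with Laplace-symmetric i.i.d.\ increments, conditioned on $S_N=0$ and on its geometric area $\sum_{n=1}^N|S_n|$ taking the prescribed value $L-N$. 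Concentration results from \cite{CNGP13,CP15} ensure that under $P_{L,\beta_c}$ the horizontal length satisfies $N/L^{2/3}\to c^\ast$ in probability, so that one can restrict attention to the bulk event $N\approx c^\ast L^{2/3}$.

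The second step is to invoke the FCLT of \cite{CarPet17b} for the area-constrained excursion of this effective walk. Because at criticality consecutive stretches tend to alternate in sign, the walk $S$ itself does not admit a direct Brownian scaling limit; the right observables are instead the midpoint process $M_n:=(S_{n-1}+S_n)/2$ and the half-amplitude process $|\ell_n|/2$. After rescaling time by $L^{2/3}$ and space by $L^{1/3}$, these converge jointly in law to $(D,|B|/2)$, where $D$ and $B$ are independent Brownian motions of variance $\sigma_\beta^2$ and $B$ is conditioned on $\int_0^{a_1}|B_u|\,du=1$ together with $B_{a_1}=0$. The independence of $D$ and $B$ reflects an asymptotic decoupling between the ``location'' (midpoint) and the ``shape'' (half-amplitude) degrees of freedom of the area-conditioned walk, and is exactly what allows the limiting shape to be a strip drawn around an independent Brownian midline.

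To translate this joint FCLT into the claimed Hausdorff convergence, the key geometric observation is that, up to a unit boundary thickening, $S(w)$ is the union over $n=1,\dots,N$ of the thin vertical slabs $\{n\}\times[M_n-|\ell_n|/2,\,M_n+|\ell_n|/2]$, so that after applying $T_{L^{2/3},L^{1/3}}$ the rescaled set is a union of rectangles whose horizontal positions, vertical midpoints and vertical half-widths are precisely given by the rescaled discrete processes entering the FCLT. Pathwise uniform convergence of the midpoint and amplitude processes then yields identification of the scaling limit with $\cS_{\text{crit}}(B,D)$ of \eqref{defcS}, once one also shows that $N/L^{2/3}\to a_1$, which follows from the discrete identity $L-N=\sum_n|S_n|$ combined with its continuous counterpart $\int_0^{a_1}|B_u|\,du=1$.

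The heart of the proof, and the main obstacle, is this last upgrade: Hausdorff convergence is strictly stronger than pathwise convergence of the driving processes. On the upper-bound side one needs a high-probability tail control on $\max_{n\leq N}|\ell_n|/L^{1/3}$ to rule out macroscopic spikes with no continuous counterpart, and on the lower-bound side one needs a uniform modulus-of-continuity estimate on the rescaled midpoint and amplitude processes to ensure that every point of $\cS_{\text{crit}}(B,D)$ is asymptotically reached by the discrete set. A further subtlety is the joint convergence of $N/L^{2/3}$ with the random stopping time $a_1$: since $a_1$ is $B$-measurable while $N$ arises from an integer area constraint, coupling the discrete and continuous area-stopping times rigorously will probably demand a local limit theorem complement to the FCLT of \cite{CarPet17b}, together with a careful analysis of how the polymer's random horizontal extent relates to the area functional of the limiting Brownian motion.
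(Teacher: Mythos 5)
Your high-level skeleton does coincide with the paper's: stretch representation and identification with the Laplace walk conditioned on area (Theorem \ref{transf21}), joint scaling limit of the profile and the center-of-mass walk (Theorem \ref{midl}), then Skorohod representation plus the observation that the occupied set is a strip of half-width $|h_L|/2$ around the midline $m_L$ to get Hausdorff convergence (Section \ref{step1}). The genuine gap is that you treat the central step as already available: you invoke ``the FCLT of \cite{CarPet17b} for the area-constrained excursion'' as if it directly gave the joint convergence of profile and midline for the \emph{whole} conditioned walk. The companion paper only treats a \emph{single} excursion away from the origin conditioned on its geometric area (Theorem \ref{convexc}). The full walk conditioned on $\{K_{\xi_L}=L,\,V_{\xi_L+1}=0\}$ returns to the origin many times; its renewal set rescales to the $1/3$-stable regenerative set (Theorem \ref{convhaus}), and the limiting profile $|B|$ conditioned on $B_{a_1}=0$ is not a normalized excursion. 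Bridging the one-excursion FCLT and Theorem \ref{midl} is where most of the paper's work lies: the area-clock time change (Theorem \ref{midlaux} and Section \ref{step2}), the excursion decomposition and the reconstruction procedure adapted from \cite{DGZ05} (Proposition \ref{propded}), the truncation of small excursions together with the control of the center-of-mass walk across them --- delicate precisely because $M$ does \emph{not} return near $0$ at the endpoints of excursions of $V$ (Proposition \ref{proposui}, with the Brownian counterpart Proposition \ref{propfin}) --- and the Bessel-bridge identification (Theorem \ref{Bessel}). None of this appears in your plan, and the asymptotic independence of midline and profile, which you simply assert, is itself one of the nontrivial points the paper must establish.

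Two further problems. First, your claim that $N/L^{2/3}\to c^{\ast}$ in probability is false at criticality: the horizontal extension has a genuinely random limit, namely $a_1$ in distribution (\cite[Theorem 2.2]{CP15}), as you yourself use two paragraphs later; there is no deterministic ``bulk event'' $N\approx c^{\ast}L^{2/3}$ to restrict to, and an argument organized around such concentration would fail (that behaviour belongs to the collapsed phase, at scale $\sqrt{L}$). Second, you locate the ``heart of the proof'' in upgrading path convergence to Hausdorff convergence and in coupling $N/L^{2/3}$ with $a_1$; in the paper, once the joint convergence of the stopped profile and center-of-mass walk is established, this upgrade is short --- Skorohod representation, the identity of the rescaled occupied set with the strip up to a Hausdorff error $O(L^{-1/3})$, and the automatic convergence of the horizontal extent $i_L$ to $a_1$ --- so no separate spike-control or local-limit complement is needed at that stage. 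Finally, note that with the Laplace walk $V$ the midpoint of the $n$-th stretch is not $(V_{n-1}+V_n)/2$: the center-of-mass walk is the alternating functional $M_i=\sum_{j\le i}(-1)^{j+1}U_j/2$, and it is exactly this alternation of signs that makes its fluctuations on short excursions the hardest estimate of the proof.
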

Let us say a few words about the 4 main challenges that we faced to prove Theorem \ref{Theo-shape}. Thanks to the representation Theorem 
\ref{transf21}, everything boils down to studying a random walk $V$ conditioned on having a prescribed large geometric area. To be more specific, we need to consider the joint convergence of a couple of processes:
the profile $|V|$ (corresponding to $|B|$ in Theorem \ref{Theo-shape}) and the center-of-mass walk $M$ (corresponding to $D$ in Theorem \ref{Theo-shape}).
\begin{enumerate}
\item Proving the convergence of time-changed discrete processes with an implicit time-change to corresponding time-changed continuous processes.
\item Handling the fluctuations of the center-of-mass walk $M$ on the excursions of the profile $|V|$. The main difficulty is that these are  not independent at fixed time horizon, although 
we shall prove that they are asymptotically independent.
\item Extending the pioneering work of \cite{DKW13} to obtain local limit theorems for a $3$ component process, i.e., an excursion of the profile conditioned on having a large extension, the associated center-of-mass walk and the geometric areas. 
This issue is settled in \cite{CarPet17b}.
\item Adapting to our needs  the reconstruction procedure introduced in \cite{DGZ05}.
\end{enumerate}


\begin{figure}[h]
    \centering
    \begin{subfigure}[b]{0.3\textwidth}
        \includegraphics[width=\textwidth]{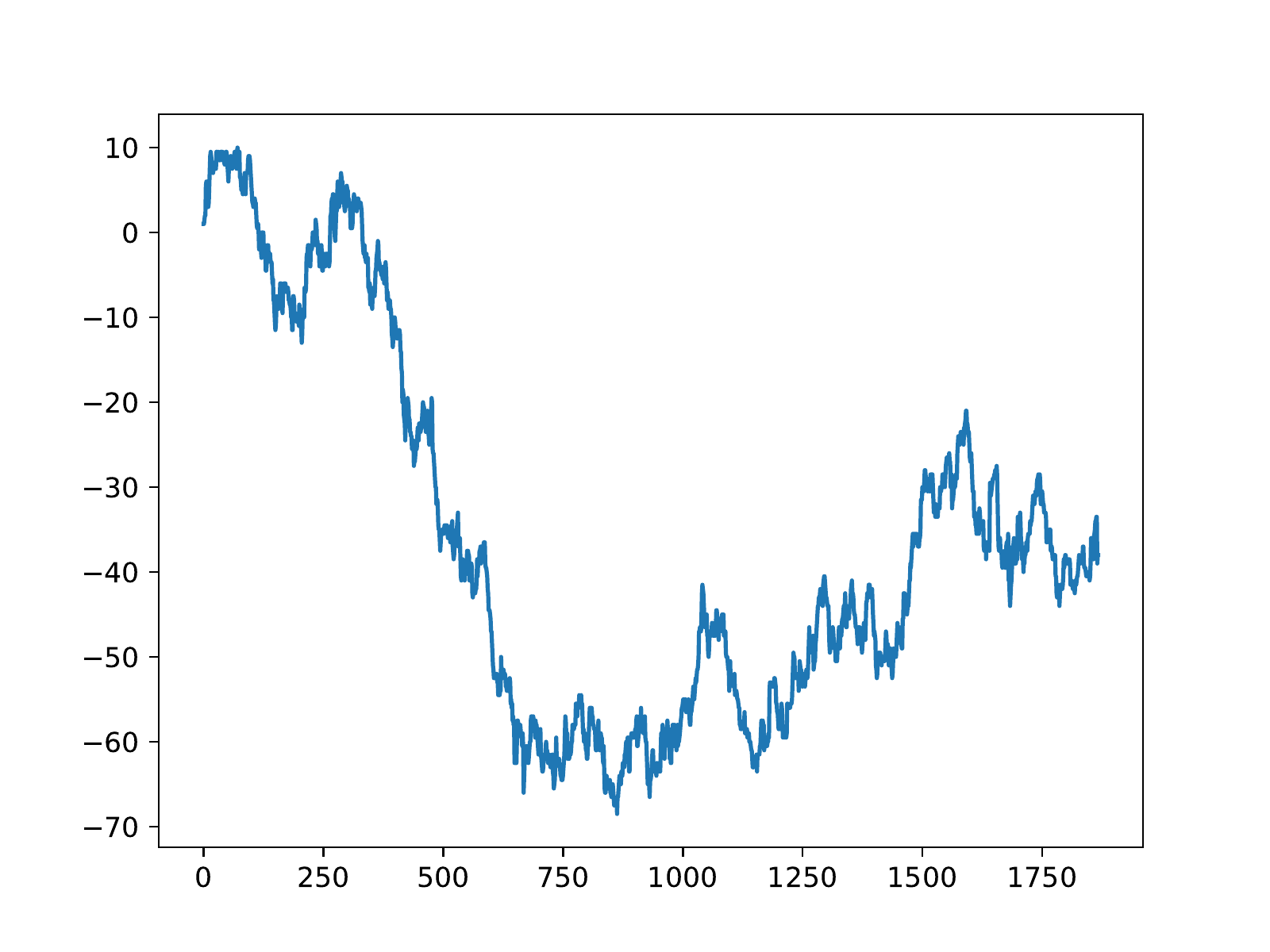}
        \caption{Center of Mass Walk}
        \label{fig:gull}
    \end{subfigure}
    ~ 
    \begin{subfigure}[b]{0.3\textwidth}
        \includegraphics[width=\textwidth]{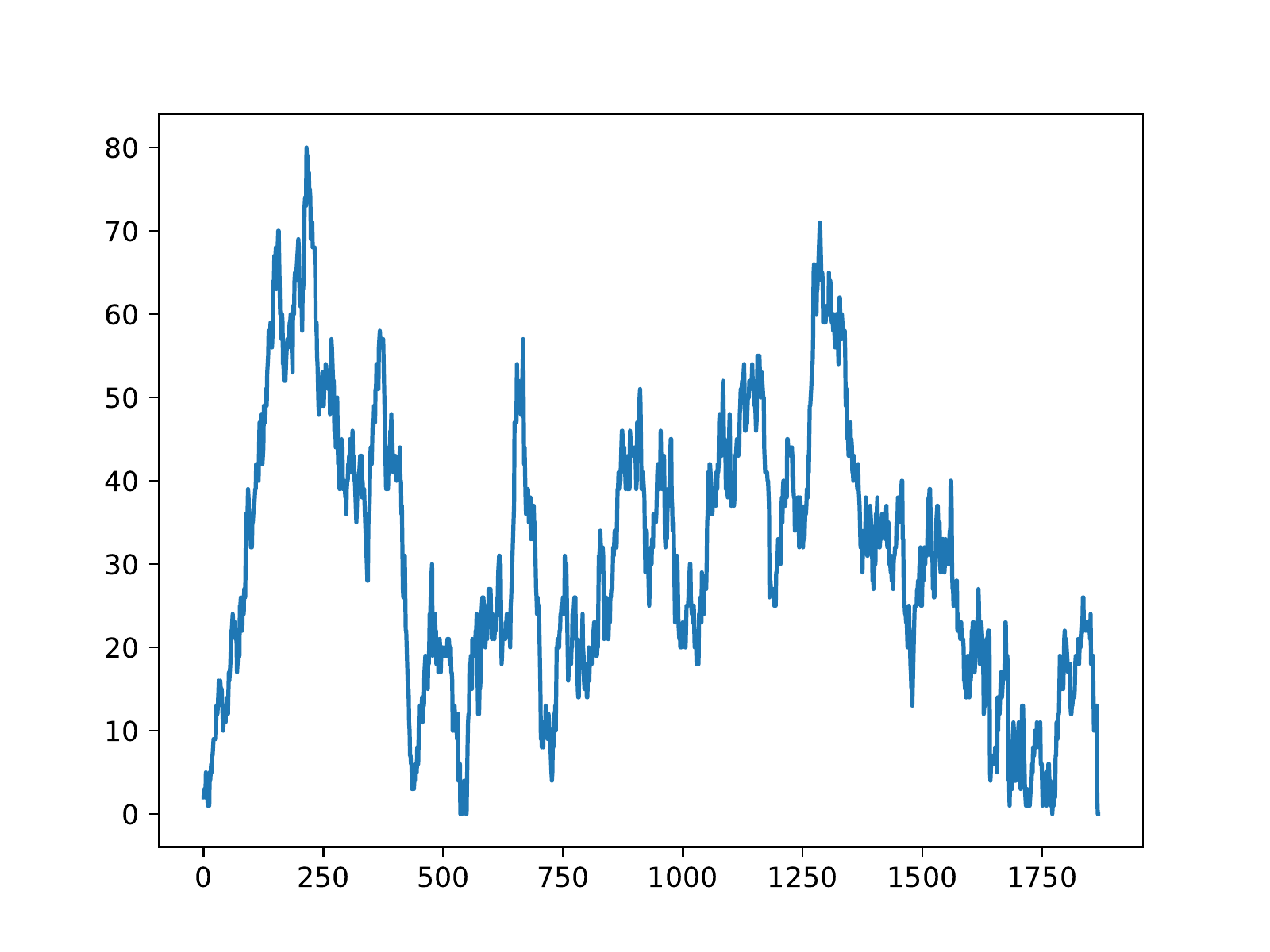}
        \caption{Profile}
        \label{fig:tiger}
    \end{subfigure}
    ~ 
    \begin{subfigure}[b]{0.3\textwidth}
        \includegraphics[width=\textwidth]{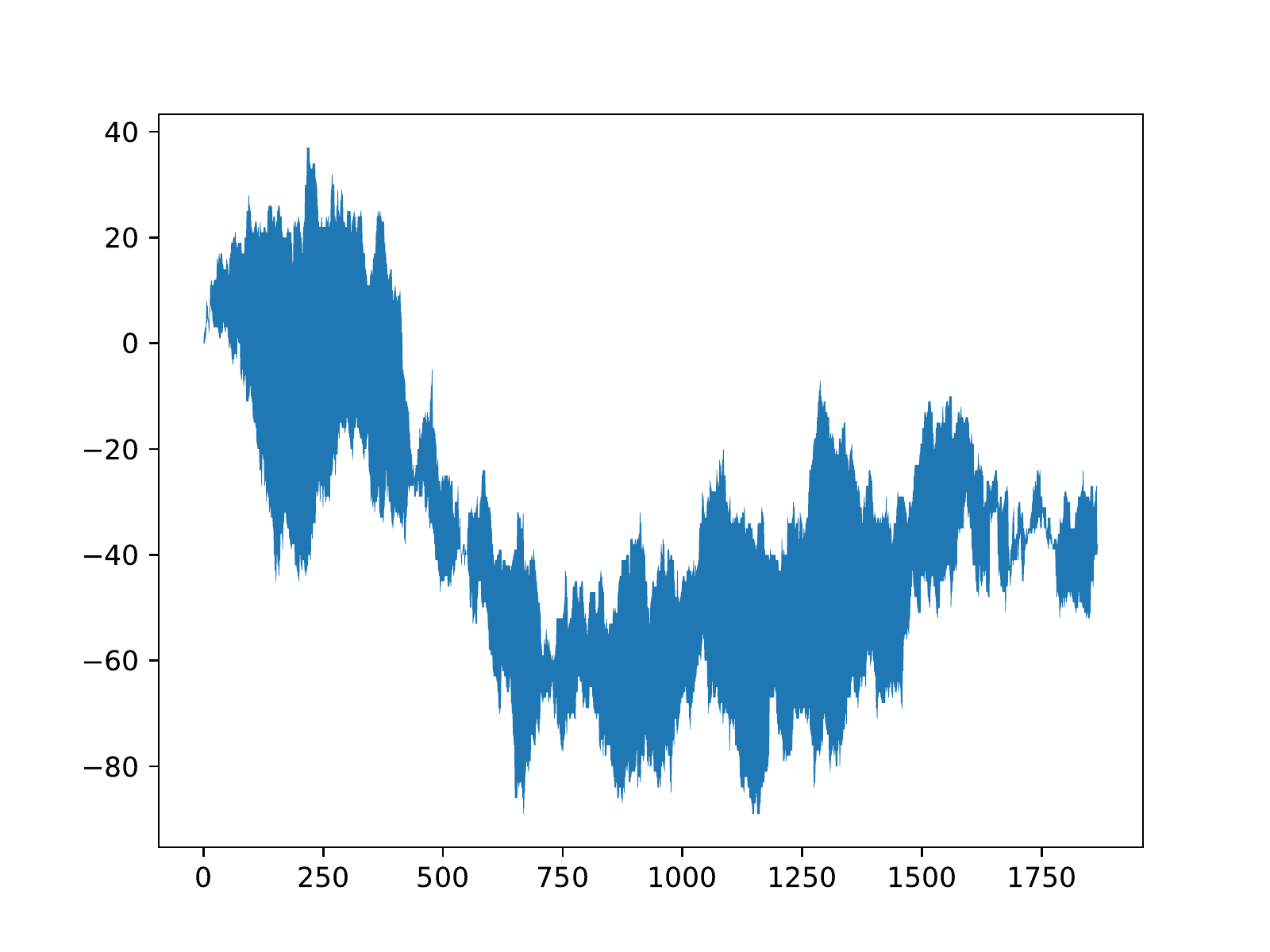}
        \caption{Enveloppe of IPDSAW}
        \label{fig:mouse}
    \end{subfigure}
    \caption{Critical IPDSAW, length $L=60000$, exact simulation}\label{fig:animals}
\end{figure}

  

\subsection{Reminder: scaling limits in the non-critical regimes}\label{discuss}
In the present section, we will explain why the shape Theorem stated above completes the picture of the scaling limit of IPDSAW initiated in  \cite{CNGP13} and \cite{CP15}. To that aim we need first to recall the stretch representation of the model, and then to associate with every configuration its profile and its center-of-mass walk, from which the occupied set in \eqref{defS} can be reconstructed. With these tools in hand we will briefly recall the scaling limits obtained in \cite{CNGP13} and \cite{CP15} concerning the extended and the collapsed regime of IPDSAW. We will terminate this section by explaining why the critical regime (that is the object of the present paper)  is more delicate than the others.  
\smallskip

\noindent {\it Stretch  description of a path.} There is a natural representation of any path in $\cW_L$ as a collection of oriented vertical stretches separated by one horizontal step. Thus, we set 
$\Omega_L:=\bigcup_{N=1}^L\mathcal{L}_{N,L}$, where $\mathcal{L}_{N,L}$ is the set of all possible configurations consisting of $N$ vertical stretches that have a total length $L$, that is
\begin{equation}\label{defLL}
\textstyle\mathcal{L}_{N,L}=\Bigl\{l\in\mathbb{Z}^N:\sum_{n=1}^N|l_n|+N=L\Bigr\}.
\end{equation}
A one to one correspondence between $\Omega_L$ and $\cW_L$ is obtained by 
associating with a given $l\in \Omega_L$ the path $w_l$  of $\cW_L$ that starts at $0$,
takes $|l_1|$ vertical steps north if $l_1>0$ and south if $l_1<0$, then takes one horizontal step, then takes $|l_2|$ vertical steps north if $l_2>0$ and south if $l_2<0$ then takes one horizontal step and so on...

For $N\in \{1,\dots,L\}$ and $l\in \cL_{N,L}$, the Hamiltonian associated with $w_l$ can be rewritten as 
\begin{equation}\label{hamm}
\textstyle H_L(w_l)=H_{L}(l_1,\ldots,l_N)=\sum_{n=1}^{N-1}(l_n\;\tilde{\wedge}\;l_{n+1})
\end{equation}
where
\begin{equation}\label{hammm}
x\;\tilde{\wedge}\;y=\begin{dcases*}
	|x|\wedge|y| & if $xy<0$,\\
  0 & otherwise.
  \end{dcases*}
\end{equation}
Thus,  the polymer measure in \eqref{polmes} becomes
\begin{equation}
P_{L,\beta}(l)= \frac{ e^{\beta H_{L}(w_l)}}{Z_{L,\beta}}, \quad l\in \Omega_L.
 \end{equation}
 We recall \ref{defS} and we denote by    
$S(l)$ the occupied set associated with any $l\in \Omega_L$ (i.e. $S(l)=S(w_l)$). We observe that $S(l)$ can be fully reconstructed with two auxiliary processes, i.e, the \emph{center-of-mass walk} $M_l$
and the \emph{ profile} $|l|$.
To be more specific, we associate with each $l \in \cL_{N,L}$ the profile $|l|=(|l_i|)_{i=0}^{N+1}$ (with $l_{N+1}=0$ by convention) and the center-of-mass walk $M_l=\big(M_{l,i}\big)_{i=0}^{N+1}$  that links the middles of each stretch consecutively, i.e.,   $M_{l,0}=0$  and 
\be{droitmi}
M_{l,i}=l_1+\dots+l_{i-1}+\frac{l_i}{2},\quad i\in \{1,\dots,N\},
\ee
and $M_{l,N+1}=l_1+\dots+l_N$. 

\smallskip

\noindent {\it Particularities of the critical regime.} 
A consequence of the fact that the occupied set $S(l)$ associated with $l\in \Omega_L$ can be recovered from its profile and center of mass walk is that  for every $(\alpha_1,\alpha_2)\in [0,1]^2$ the scaling limit of the rescaled occupied set $T_{L^{\alpha_1},L^{\alpha_2}}(S(l))$ (with $l$ a typical path sampled from $P_{L,\beta}$) can be derived from the scaling limit of  $(|l|, M_l)$ rescaled in time by $L^{\alpha_1}$ and in space by $L^{\alpha_2}$.
This is the strategy adopted in \cite{CNGP13} for the collapsed regime and in \cite{CP15} for the extended regime. In the extended regime (i.e., $\beta<\beta_c$), the horizontal extension of a typical path follows a law of large number of speed $L$ (that is $\alpha_1=1$), the vertical fluctuations of its
 center-of-mass walk are of order $\sqrt{L}$ (i.e., $\alpha_2=1/2$) whereas its vertical stretches are of finite size. Therefore, once rescaled vertically by $\sqrt{L}$ the profile 
vanishes whereas the center-of-mass walk displays a Brownian limit.  In other words,  once rescaled vertically by $\sqrt{L}$ and horizontally by $L$ and in the limit $L\to \infty$
the upper and lower envelopes of $T_{L,\sqrt{L}}(S(l))$ coalesce into a continuous trajectory whose law is that of a Brownian motion, i.e., 
one can straightforwardly deduce from \cite[Theorem 2.8]{CP15} that 
\begin{equation}
Q_{L,\beta}^{1,\frac{1}{2}}\xrightarrow[L\to \infty]{d} \big\{(s,\sigma_\beta B_s)\colon\; s\in [0,e_{\beta}]\big\}
\end{equation}
where $\sigma_\beta$ and $e_\beta$ are explicit constants and $B$ is a standard Brownian motion.
\smallskip

In the collapsed regime (i.e., $\beta>\beta_c$) the fraction of self-touching performed by a typical trajectory equals $1+o(1)$, which forces the vertical stretches to be long and with alternating signs. As a consequence the typical horizontal extension of a path sampled from $P_{L,\beta}$ is much shorter than its counterpart in the extended regime and follows a law of large number of speed $\sqrt{L}$ (i.e., $\alpha_1=1/2$). The typical length of vertical stretches is $\sqrt{L}$ as well (i.e., $\alpha_2=1/2$)  and the profile rescaled in time and space by $\sqrt{L}$ converges towards a deterministic Wulff shape. The center-of-mass walk, in turn, fluctuates with an amplitude $L^{1/4}$ and therefore vanishes when we rescale it in time and space by $\sqrt{L}$. Unlike the extended regime, inside the collapsed phase the scaling limit of $T_{\sqrt{L},\sqrt{L}}(S(l))$ is driven by the profile only and we recall  \cite[Theorem D]{CNGP13} which states that 
\begin{equation}
Q_{L,\beta}^{\frac{1}{2},\frac{1}{2}}\xrightarrow[L\to \infty]{d} \cS_\beta
\end{equation}
where $\cS_\beta$ is a deterministic Wulff-Shape, symmetric with respect to the $x$-axis.
\smallskip

In \cite[Theorem 2.2]{CP15} we proved that, at criticality,  the horizontal extension of a typical path follows a central limit theorem with speed $L^{2/3}$ and a limiting law corresponding to that of the random 
time $a_1$ at which the geometric area swept by a Brownian motion (of variance $\sigma_\beta^2$) reaches $1$ conditioned on the fact that the Brownian touches $0$ at $a_1$.  Thus, the last pending issue concerning the scaling limits of IPDSAW was to derive the scaling limit of the full path at criticality. This is the object of the present paper but let us insist on the fact that this is also the hardest issue. 
The reason is that, unlike the extended regime or the inside of the collapsed regime, at criticality  the profile and the center-of-mass walk display vertical fluctuations of the same order  (i.e. $L^{1/3}$).

\section{Organization of the proof}\label{organi}

The present Section is an extended outline of the proof of Theorem \ref{Theo-shape}. In Section \ref{rwcritp} we settle some notation to state Theorem \ref{transf21} which
sheds light on the fact that the critical-IPDSAW can be studied indirectly with the help of an auxiliary random walk conditioned on sweeping a prescribed geometric area.  
Then, in Section \ref{centerofmass} 
we state Theorem \ref{midl} which provides the scaling limits of the properly rescaled profile and center-of-mass walk for a typical configuration sampled from $P_{L,\beta}$. Theorem \ref{midl} actually implies Theorem \ref{Theo-shape}
but we will not prove Theorem \ref{midl} directly. As exposed carefully in Remark \ref{tzl}, we will rather apply a time change on both profile and center-of-mass walk to state Theorem \ref{midlaux} which implies Theorem \ref{midl}
but turns out to be easier to prove.   


\subsection{Random walk representation of IPDSAW at its critical point}\label{rwcritp}

The stretch representation of IPDSAW (displayed in Section \ref{discuss} above) was initially used in \cite{NGP13} to develop an alternative probabilistic approach of the model. This new approach involves an auxiliary random walk that we describe below before stating Theorem \ref{transf21} which enlightens  the particular relationship  between this random walk
and the model at criticality  (i.e., at $\beta=\beta_c$).
\smallskip

We  let $\mathbf{P}_{\beta}$ be the law of the random walk $V:=(V_n)_{n\in\mathbb{N}}$ starting from the origin and whose increments
$(U_i)_{i\in\mathbb{N}}$ are i.i.d and follow a discrete Laplace law, i.e.,
\begin{equation}\label{lawP}
\mathbf{P}_{\beta}(U_1=k)=\tfrac{e^{-\frac{\beta}{2}|k|}}{c_{\beta}}\quad\forall k\in\mathbb{Z}\quad\text{with}\quad c_{\beta}:=\tfrac{1+e^{-\beta/2}}{1-e^{-\beta/2}}.
\end{equation}  
and we set $\sigma_\beta^2:=\text{Var}_\beta(U_1)$.
For $L\in \N$ and $N\in \{1,\dots,L\}$ we set 
\be{defG}
\cV_{N,L-N}:=\{V\colon G_N(V)=L-N, V_{N+1}=0\} \quad \text{with} \quad  G_N(V)=\textstyle \sum_{i=0}^{N} |V_i|,
\ee
 and
we denote by $T_N$ the  one-to-one correspondence that maps 
$\cV_{N,L-N}$ onto $\cL_{N,L}$ as 
\be{defTN}
T_N(V)_i=(-1)^{i-1} V_i\quad  \text{for all} \quad i\in \{1,\dots N\}.
\ee 
For $n\in \N$ and for $V=(V_i)_{i=0}^\infty\in \Z^\N$  we  define 
 $K_n(V):=n+G_N(V)=\sum_{i=1}^n 1+|V_i|$ and its pseudo-inverse 
\begin{equation}\label{taul}
\xi_s=\inf\{i\geq 0\colon\, K_i(V)\geq s\}, \quad s\in [0,\infty).
\end{equation}  
We note incidentally  that \eqref{taul} implies 
$K_j=\max\{n\geq 1\colon\, \xi_n=j\}$ for $j\in \N_0$. With a slight abuse of notation (and for random walk trajectories $V$ only) we will call $K_n$ the geometric area swept by $V$ up to time $n$ 
although it would be more correct to call it geometric area plus extension.

With these notations in hand, we state the fundamental Theorem \ref{transf21} below.  With this Theorem, we claim that at criticality, studying the model IPDSAW is completely equivalent to 
studying the $V$ random walk conditioned on sweeping a prescribed geometric area.
\begin{atheo}[Random Walk Representation at criticality]\label{transf21}
\begin{equation}\label{transf2}
P_{L,\beta_c}\bigl(l\in\cdot \bigr)=\mathbf{P}_{\beta_c}\bigl(T_{\xi_L}(V)\in\cdot \mid V_{\xi_L+1}=0, \ K_{\xi_L}=L\bigr).
\end{equation}
\end{atheo}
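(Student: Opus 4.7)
The proof rests on combining the bijection $T_N$ of \eqref{defTN} between $\cL_{N,L}$ and $\cV_{N,L-N}$ with a special algebraic feature of $\beta_c$. My plan is to fix $N \in \{1,\dots,L\}$ and $l \in \cL_{N,L}$, set $V = T_N^{-1}(l)$ (so $V_i = (-1)^{i-1} l_i$ and $V_0 = V_{N+1}=0$), compute $\mathbf{P}_{\beta}$ of this specific trajectory exactly, and then observe that at $\beta=\beta_c$ the $N$-dependent prefactor collapses.

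First I would express the total variation of $V$ in terms of $H_L(l)$. The increments $U_i := V_i-V_{i-1} = (-1)^{i-1}(l_i+l_{i-1})$ satisfy $|U_i| = |l_i+l_{i-1}|$, and a case analysis on the signs of $l_{i-1},l_i$, combined with definition \eqref{hammm} of $\tilde\wedge$, yields
\[
|U_i| \;=\; |l_i| + |l_{i-1}| - 2\,(l_i\,\tilde\wedge\,l_{i-1}), \qquad i \in \{1,\dots,N\},
\]
while $|U_{N+1}| = |V_N| = |l_N|$. Summing and invoking \eqref{hamm} together with the length constraint $\sum_{i=1}^N|l_i|+N=L$ gives
\[
\textstyle \sum_{i=1}^{N+1}|U_i| \;=\; 2(L-N)-2H_L(l).
\]

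Next, from \eqref{lawP} and the independence of the increments, any trajectory in $\cV_{N,L-N}$ has probability
\[
\mathbf{P}_\beta(V) \;=\; c_\beta^{-(N+1)} \exp\bigl(-\tfrac{\beta}{2}\textstyle\sum_{i=1}^{N+1}|U_i|\bigr) \;=\; c_\beta^{-1}\,(e^\beta c_\beta^{-1})^N\, e^{-\beta L}\, e^{\beta H_L(l)}.
\]
The event $\{V_{\xi_L+1}=0,\,K_{\xi_L}=L\}$ is the disjoint union $\bigsqcup_{N=1}^L\{V\in\cV_{N,L-N}\}$: indeed the strict monotonicity of $n\mapsto K_n$ forces $\xi_L=N$ on $\{K_N=L\}$, and on this piece $T_{\xi_L}(V)=T_N(V)$.

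The crucial step is the classical characterization of criticality (see \cite[Theorem 1.3]{NGP13}, going back to \cite{BGW92}): $\beta_c$ is the unique positive $\beta$ with $c_\beta = e^\beta$. At $\beta=\beta_c$ the factor $(e^{\beta_c} c_{\beta_c}^{-1})^N$ equals $1$, so the probability above reduces to $c_{\beta_c}^{-1} e^{-\beta_c L} e^{\beta_c H_L(l)}$, independent of $N$. Summing over $l\in\bigcup_N\cL_{N,L}$ then yields $\mathbf{P}_{\beta_c}(V_{\xi_L+1}=0,\,K_{\xi_L}=L) = c_{\beta_c}^{-1} e^{-\beta_c L} Z_{L,\beta_c}$, and taking the ratio reproduces \eqref{polmes}. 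The only subtle point is this collapse: off criticality the factor $(e^\beta c_\beta^{-1})^N$ survives and the random walk representation carries an additional tilt on the number of stretches (as used in \cite{NGP13,CNGP13,CP15}), whereas at $\beta_c$ the tilt disappears and one obtains the simple unconditional form \eqref{transf2}.
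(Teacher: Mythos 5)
Your proof is correct and is essentially the paper's argument: both rest on the identity $x\,\tilde\wedge\,y=\tfrac12(|x|+|y|-|x+y|)$, the bijection $T_N$, and the characterization $c_{\beta_c}=e^{\beta_c}$ (i.e.\ $\Gamma_{\beta_c}=1$) that kills the $N$-dependent prefactor. The only difference is organizational — you compute the weight path by path and identify the Radon--Nikodym factor directly, whereas the paper aggregates first into the partition-function identity \eqref{tgh} and then reads off \eqref{egalisucc}; your careful justification that $\{V_{\xi_L+1}=0,\,K_{\xi_L}=L\}$ decomposes as the disjoint union over $N$ of $\cV_{N,L-N}$ with $\xi_L=N$ is a point the paper leaves implicit.
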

This theorem will be proven in Section \ref{proofRWR}.

%


\subsection{Center-of-mass Walk and Profile} \label{centerofmass}

With every 
$l\in \Omega_L$, we associate $\tilde M_{l,L}$ and $|\tilde l |_L$    the cadlag processes on $[0,\infty)$ obtained by rescaling the center-of-mass walk $M_l$ and the profile $|l|$
by $L^{2/3}$ horizontally and by $L^{1/3}$ vertically, i.e.,
\begin{align}\label{cadlagM}
\tilde M_{l,L}(s)&=  \frac{1}{L^{1/3}} M_{l,\lfloor s L^{2/3}\rfloor\wedge N_l}, \quad s\in [0,\infty),\\
|\tilde l|_L(s)&= \frac{1}{L^{1/3}} |l_{\lfloor s L^{2/3}\rfloor\wedge N_l}|, \quad s\in [0,\infty),
\end{align}
where $N_l$ is the number of vertical stretches composing $l$ (i.e. $l\in \cL_{N_l,L}$).

We  denote by $R_{L,\beta}$ the law of $(|\tilde l|_L,\tilde M_{l,L} )$ with $l$ sampled from $P_{L,\beta}$ and we state Theorem \ref{midl} 
which claims that the rescaled profile and center-of-mass walk of a typical configuration of the critical-IPDSAW converge simultaneously towards
Brownian motions stopped at some particular random time. This Theorem is illustrated with Figure \ref{fig:animals}, where an 
exact simulation of the critical IPDSAW is provided in length $L=60000$. 
  
\smallskip

\begin{atheo} \label{midl}
At criticality ($\beta=\beta_c$)
we have 
\begin{equation}
R_{L,\beta}\xrightarrow[L\to \infty]{d} \big (|B_{s\wedge a_1}|,D_{s\wedge a_1}\big)_{s\in [0,\infty)}
\end{equation} 
where
$B$ and $D$ are independent Brownian motions of variance $\sigma^2_\beta$, where $a_1$ is the time at which the geometric area described by $B$ reaches $1$, that is,  $\int_0^{a_1} |B_u| du=1$ and with $B$ conditioned  on the 
event $B_{a_1}=0$.
\end{atheo}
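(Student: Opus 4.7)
The natural approach is to use Theorem \ref{transf21}: under $P_{L,\beta_c}$, a typical configuration $l$ has the same law as $T_{\xi_L}(V)$ under $\mathbf{P}_{\beta_c}(\cdot\mid V_{\xi_L+1}=0,\,K_{\xi_L}=L)$. Through the bijection $T_N$, the profile $|l|$ coincides with $|V|$ and, because $l_j=(-1)^{j-1}V_j$, the center-of-mass walk rewrites as the alternating partial sum $M_{l,i}=\sum_{j=1}^{i-1}(-1)^{j-1}V_j+\tfrac{1}{2}(-1)^{i-1}V_i$. Hence Theorem \ref{midl} reduces to a joint functional CLT for the pair $(|V|,M_V)$ under the above conditioning, at horizontal scale $L^{2/3}$ and vertical scale $L^{1/3}$.

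Because $\xi_L$ is an implicit stopping time, I would first establish the time-changed statement (Theorem \ref{midlaux} announced in Remark \ref{tzl}) which parametrizes time by the walk step $n$ rather than by the cumulative area $K_n$. Theorem \ref{midl} then follows by composition with the inverse time change $s\mapsto K_{\lfloor sL^{2/3}\rfloor}/L$, whose asymptotic continuity is a consequence of the profile convergence together with the identity $K_n=n+\sum_{i\leq n}|V_i|$.

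The marginal convergence of the profile is furnished by the companion paper \cite{CarPet17b}: a functional invariance principle for the Laplace walk $V$ conditioned on $\{V_{N+1}=0,\,K_N=L\}$ with $N$ of order $L^{2/3}$, itself a consequence of a sharp local limit theorem for the three-component vector (walk endpoint, geometric area, algebraic area). Under the $(L^{2/3},L^{1/3})$ rescaling, this yields convergence of $L^{-1/3}|V_{\lfloor n L^{2/3}\rfloor}|$ to $|B|$ on $[0,a_1]$, with $B$ the Brownian motion of the statement.

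The heart of the proof, and the main obstacle, is to extract joint convergence of the center-of-mass walk towards a Brownian motion $D$ \emph{independent} of $B$; at finite $L$ both processes are built from the same walk $V$ and are strongly correlated. Adapting the reconstruction procedure of \cite{DGZ05}, I would decompose $V$ into excursions between successive zero-crossings (whose macroscopic structure is already captured by $|B|$ on $[0,a_1]$); exploit that the alternating partial sum $\sum_{j=1}^{i-1}(-1)^{j-1}V_j$ can be rewritten, via $V_k=\sum_{\ell\leq k}U_\ell$, as a partial sum of $U_\ell$'s with alternating indices, hence as a Laplace random walk of half the length whose distribution is otherwise largely unconstrained by the profile; and finally use the local-CLT asymptotics of \cite{CarPet17b} to quantify the joint conditioning $\{V_{N+1}=0,\,K_N=L\}$ and conclude that the residual randomness of $M_{l,\cdot}$ converges to a Brownian motion of variance $\sigma_\beta^2$ asymptotically independent of the profile limit. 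The most delicate step is this last one: the endpoint constraint $\{V_{N+1}=0\}$ couples the alternate sub-walks that produce $M_{l,\cdot}$, and propagating asymptotic independence through this coupling, uniformly in $L$ and simultaneously with the area conditioning $\{K_N=L\}$, is precisely where the fine local-CLT of the companion paper is indispensable.
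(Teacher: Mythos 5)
Your overall architecture (reduce to the random walk via Theorem \ref{transf21}, prove the area-parametrized statement first, then undo the time change) is the same as the paper's, but the heart of your plan has a genuine gap: the mechanism by which you obtain the joint limit with $D$ independent of $B$ does not exist in the form you invoke. The companion paper \cite{CarPet17b} does \emph{not} provide a local limit theorem or invariance principle for the full walk under the global conditioning $\{V_{N+1}=0,\,K_N=L\}$ with $N\asymp L^{2/3}$; its input (Theorem \ref{convexc} and the associated local limit theorem) concerns a \emph{single excursion} conditioned on its geometric area, for which the joint limit ``normalized Brownian excursion plus an independent Brownian motion'' is established. The paper never needs to ``quantify the joint conditioning'' globally: the global constraint is converted into the renewal event $\{L\in\mathfrak{X}\}$ (after the tightness of $y_L$ in Remark \ref{tyl}), the set of cumulated excursion areas is shown to converge to the $1/3$-stable regenerative set (Theorem \ref{convhaus}), the large excursions are plugged in via Theorem \ref{convexc} with a sign-symmetry argument eliminating the orientation variables, and the identification with the conditioned Brownian object goes through the Bessel-bridge Theorem \ref{Bessel}. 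Your alternative — splitting $M$ into alternating sub-walks and arguing their law is ``largely unconstrained by the profile'' — is only a heuristic; without the excursion-by-excursion reconstruction you have no tool to propagate asymptotic independence through the conditioning.

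Two further steps you treat as automatic are in fact substantial. First, the reconstruction requires controlling the center-of-mass walk on the \emph{small} excursions (its value at the start of each large excursion does not return near $0$ as $V$ does); this is Proposition \ref{proposui}, proved by a martingale decomposition, cyclic-shift estimates and the stopping-time bound of Lemma \ref{boundtauti} — it is one of the two main new difficulties flagged in Remark \ref{tzl} and is absent from your plan (as is its Brownian counterpart, Proposition \ref{propfin}). Second, inverting the time change is not a soft consequence of profile convergence and $K_n=n+\sum_{i\le n}|V_i|$: the inverse involves $\int_0^s du/(L^{-1/3}+|\widetilde V_L(u)|)$, so one must show that the conditioned walk spends a negligible proportion of its $O(L^{2/3})$ steps below $\eta L^{1/3}$ uniformly as $\eta\to0$ (estimate \eqref{tppr}), which in the paper requires the order-statistics bounds of \cite{CP15} and the excursion-extension asymptotics of \cite{CarPet17b}.
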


We will prove in Section \ref{step1} that Theorem \ref{midl} implies Theorem \ref{Theo-shape}.  
For this reason, the target of the present paper will become to prove Theorem \ref{midl}, but let us first
recall Theorem \ref{transf21} which allows us to view $R_{L,\beta}$ as the law of two other 
cadlag processes built with the $V$ random walk. 
In this spirit, for a given random walk trajectory  $V$, we define $M=(M_i)_{i=0}^\infty$ the counterpart of the center-of-mass walk introduced in 
\eqref{droitmi} as 
\be{defM}
M_{i}=\sum_{j=1}^{i} (-1)^{j+1} \frac{U_j}{2},\quad i\in \N,
\ee
and with $M_0=0$. We let  $\widehat V_L$  and  $\widehat M_L$ be the cadlag processes obtained after rescaling $V$ and $M$ by $L^{2/3}$ in time and by $L^{1/3}$ in space and stopped at 
$\xi_L$ (recall \eqref{taul}), that is,

\begin{align}\label{defmchap}
\widehat V_L \colon  [0,\infty)&\mapsto \R \hspace{2.5 cm} \text{and} \quad \quad \quad  \widehat M_L \colon  [0,\infty)\mapsto \R\\
\nonumber s&\to L^{-\frac{1}{3}} \ V_{\lfloor sL^{2/3}\rfloor  \wedge \xi_L} \hspace{3.8 cm} s\to L^{-\frac{1}{3}} \ M_{\lfloor sL^{2/3}\rfloor  \wedge \xi_L}
\end{align}
A consequence of Theorem \ref{transf21} is that, 
\be{equalaw1}
R_{L,\beta_c}=_{\text{Law}} (| \widehat V_L|, \widehat M_L)
\ee
with $V$ sampled from $\mathbf{P}_{\beta_c}\bigl(\cdot \mid V_{\xi_L+1}=0, \ K_{\xi_L}=L\bigr)$. In the proof of Therorem \ref{midl} (see Section \ref{step2} below), we will use the representation of $R_{L,\beta_c}$ in 
\eqref{equalaw1}.

\smallskip

\noindent {\it Renewal structure}  
We introduce a renewal structure which roughly consists of the excursions of the $V$ random walk away from the origin and turns out to be a fundamental tool of our analysis.
To  that purpose, we define a sequence of stopping times $(\tau_k)_{k\in\N}$ similar to ladder times by the prescription $\tau_0=0$ and
\begin{equation}\label{premm}
  \tau_{k+1} = \inf\ens{i> \tau_k : V_{i-1}\neq 0 \text{ and }
    V_{i-1}V_i \le 0},
\end{equation}
so that the length of the $k$-th excursions is given by  
\begin{equation}\label{deuxx}
  \mathfrak{N}_k = \tau_k -\tau_{k-1} \quad(k\ge 1)\,,
\end{equation}
and the area swept
\be{trois}
\mathfrak{A}_k =\valabs{V_{\tau_{k-1}}} + \cdots + \valabs{V_{\tau_k-1}} \quad(k\ge 1)\,.
\ee
For each excursion we consider the sum of its length and its geometric area.   For this reason we define the quantity $X_i=\mathfrak{N}_i+\mathfrak{A}_i$ for $i \in \N$ and, with a slight abuse of notation, we will call $X_i$ the geometric area swept by the $i$-th excursion.  We set $S_0=0$ and $S_n=X_1 + \cdots + X_n$ for $n\geq 1$  so that we can define $\mathfrak{X}$ a random set of points on $\N_0$ as
\be{defX}
\mathfrak{X}=\{S_n, n\in \N_0\}.
\ee
We will also need to consider $v_L$  the number of excursions that have been completed by $V$ when its geometric area reaches $L$, i.e.,    $v_L:=\max\{i\geq 0\colon S_i\leq L\}$.

\begin{remark}\label{tyl}
It turns out that it is sufficient to prove Theorem \ref{midl} with $V$ sampled from ${\bf P}_{\beta_c}(\cdot\,|\, L\in \mathfrak{X})$ instead of  $\mathbf{P}_{\beta_c}(\cdot \mid V_{\xi_L+1}=0, \ K_{\xi_L}=L)$. Understanding this last point requires to define, for every $L\in \N$, the random variable  $y_L$ which, for $V$ sampled from $\mathbf{P}_{\beta_c}(\cdot \mid V_{\xi_L+1}=0, \ K_{\xi_L}=L)$, records the length along which V  sticks to the origin before $\xi_L+1$, i.e., 
$$y_L:=\max\{k\geq 0\colon\, V_{\xi_L-k+1}=\dots=V_{\xi_L+1}=0\}.$$
 We will prove in Section \ref{prtyl} that $(y_L)_{L\in \N}$ forms 
a tight family of random variables and therefore  it is sufficient to consider the events $y_L=k$ for finitely many $k$. Moreover,  for $k\in \N$, the event $y_L=k$ yields that  $L-k+1\in \mathfrak{X}$ and $V_{\xi_{L-k+1}}=0$, so that  the trajectory $(V_i)_{i=0}^{\xi_{L-k+1}}$ has for law ${\bf P}_{\beta_c}(\cdot \, |\, L-k+1\in \mathfrak{X}, V_{\xi_{L-k+1}}=0)$.  We conclude by noticing that 
the symmetric Laplace distribution of the increments of $V$  yields that 
$(V_i)_{i=0}^{\xi_{L-k}}$ and $(M_i)_{i=0}^{\xi_{L-k}}$ have the same law when $V$ is sampled from ${\bf P}_{\beta_c}(\cdot \, |\, L-k+1\in \mathfrak{X}, V_{\xi_{L-k+1}}=0)$ as when $V$ is sampled from ${\bf P}_{\beta_c}(\cdot \, |\, L-k+1\in \mathfrak{X})$.
\end{remark}
\smallskip

\begin{remark}\label{tzl}
Our strategy to prove Theorem \ref{midl} is reminiscent of the strategy used in  
\cite{DGZ05} to derive the scaling limit of a particular polymer model, i.e., the critical wetting model. To be more specific, the authors prove that, at criticality,  a $1+1$-dimensional $L$-step random walk pinned at an horizontal hard-wall,
constrained to start and end at the wall and rescaled in time by $L$ and in space by $\sqrt{L}$  converges in distribution towards 
the modulus of a Brownian bridge. To achieve this result they display a smart reconstruction of the path under the polymer measure using the following $4$ features of their model
\begin{itemize}
\item[i)] the Hamiltonian only depends on $|\cA_L|$ where $\cA_L:=\{0,y_1,\dots,y_{n_L}=L\}$ is the set of pinned sites of a given $L$-step random walk path. 
Moreover, under the critical polymer measure, $\frac{1}{L} \cA_L$ converges in law  towards 
$\cA_\infty:=\{s\in [0,1]\colon\, \beta_s=0\}$ with $(\beta_s)_{s\in [0,1]}$ a Brownian bridge,  
\item[ii)] under the a priori random walk law $P$ and once conditioned on  $\cA_L:=\{0,y_1,\dots,y_{n_L}=L\}$ the excursions 
of the random walk are independent and their respective length are prescribed by the inter-arrivals of  $\cA_L$,
\item [iii)] a random walk excursion of length $N$ rescaled in time by $N$ and in space by $\sqrt{N}$ converges in distribution towards a standard Brownian excursion,
\item [iv)] under the critical polymer measure, the inter-arrivals of $\cA_L$, i.e., $(y_{i+1}-y_i)_{i\geq 0}$ are almost surely finite and heavy-tailed random variables.
\end{itemize}
Their technique consists of using i) in combination with Skohorod's representation theorem to first  sample $\cA_L$ under the polymer measure 
and $\cA_\infty$ such that $\frac{1}{L}  \cA_L$ converges almost surely towards $\cA_\infty$. Then, with iv) they claim that it suffices to consider finitely many inter-arrivals (and therefore excursions) of
$\cA_L$ to reconstruct a fraction of the path arbitrary close to $1$. Finally, they use (ii-iii)  in combination with  Skohorod's representation theorem to sample random walk excursions on the longest inter-arrivals of $\cA_L$ 
 and recover the modulus of a Brownian bridge.
  
Let us briefly describe the 4 features of IPDSAW (a--d) which can be substituted to (i--iv) above in order to adapt the reconstruction technic in our context:
\begin{itemize}
\item[a)] Theorem \ref{transf21}  implies that the Hamiltonian of IPDSAW is somehow absorbed in the auxiliary random walk law ${\bf P}_\beta(\cdot\, |\, L\in \mathfrak{X})$. The   
sequence of cumulated geometric areas $\mathfrak{X}_L:=\mathfrak{X}\cap[0,L]=\{0,x_1,\dots,x_{v_L}\}$ (recall \eqref{defX}) plays the role of $\cA_L$ and $\frac{1}{L}\mathfrak{X}_L$ converges in distribution 
towards $\mathfrak{X}_\infty:=\{s\in [0,1]\colon\, B_{a_s}=0\}$ where $B$ is defined as in the statement of Theorem \ref{midl}, 
\item[b)] under the random walk law ${\bf P}_\beta$ and once conditioned on $\mathfrak{X}_L$,  Proposition \ref{pro:renstruccritical} below guarantees that the excursions (in modulus) are independent and their respective geometric area are prescribed by the inter-arrivals of $\mathfrak{X}_L$, 
\item[c)] with Theorem \ref{convexc}, we claim that, once conditioned on sweeping a geometric area $N$,  a random walk excursion and its associated center-of-mass walk rescaled in time by $N^{2/3}$
and in space by $N^{1/3}$ converge towards a Brownian excursion normalized by its area and an independent Brownian motion,
\item[d)] Under ${\bf P}_\beta$, the inter-arrivals of $\mathfrak{X}_L$, i.e., $(z_{i+1}-z_i)_{i\geq 0}$ are almost surely finite and heavy-tailed random variables.
\end{itemize}
 Although the statements (a--d) constitute the skeleton of our path reconstruction (borrowed from \cite{DGZ05}), adapting this method to the context of critical-IPDSAW raises two major additional challenges that are addressed in the present paper.  The first difficulty comes from the fact that we consider simultaneously the random walk $V_L$ and its associated center-of-mass walk $M_L$. 
If the random walk comes back very close to the origin at the end of its excursions this is not the case of the center-of-mass walk. Therefore, one needs informations on the position of $M_L$ at the beginning of every long excursion of $V_L$. This requires to control the fluctuations of $M_L$ on the "short" excursions of $V_L$ and it will be the object of  Propositions \ref{proposui}
and \ref{propfin} in Section \ref{prmidlaw}.  

 The second difficulty comes from the fact that the system size $L$ provides a conditioning on the  
  geometric area rather than on the length of the paths taken into account. To be more specific, in the wetting model the paths are constrained to 
 complete their last excursion with a total length that equals $L$ whereas in the present model there are no constraint on the total length but the paths must complete their last excursion
 with a total geometric area that equals $L$. 
 This is the reason why we perform below a time change on $\widehat V_L$ and  $\widehat M_L$ so that the resulting random processes 
are defined on $[0,1]$ and that for $s\in [0,1]$ they are observed at the time at which the geometric area swept by $V$ equals $s L$. Operating this time change allows us to state 
Theorem \ref{midlaux} whose proof is simpler although it is equivalent to Theorem \ref{midl}.
 
\end{remark}

%
We recall \eqref{taul} and we define 
the cadlag processes $\widetilde V_L$ and  $\widetilde M_L$ as
\begin{align}
\widetilde  V_L \colon  [0,1]&\mapsto \R \hspace{2.5 cm} \text{and} \hspace{1.5cm}  \widetilde M_L \colon  [0,1]\mapsto \R\\
\nonumber s&\to L^{-\frac{1}{3}} \ V_{\xi_{sL}}\hspace{5.1 cm} s\to L^{-\frac{1}{3}} \ M_{\xi_{sL}}
\end{align}
We will need to perform the same type of time change for the standard Brownian motion $B$, i.e., for $s\geq 0$ we denote by $A_s$ 
the geometric area
swept by $B$ up to time $s$, that is, 
\be{defarea}
A_s=\int_0^s |B_u| du.
\ee 
The continuity and strict monotonicity of $A$ allows us to define 
$a$ as the inverse of  $A$, i.e., $A_{a_s}=s$ for $s\in [0,\infty)$. 
For $B$ and $D$ are two independent Brownian motions, we define the continuous processes $\widehat B$, $\widetilde B$, $\widehat D$ and $\widetilde D$ as the Brownian counterparts of $\widetilde V_L$, $\widehat V_L$, $\widetilde M_L$ and $\widehat M_L$, respectively,  i.e., 
\begin{align}
\widehat  B \colon  [0,\infty)&\mapsto \R \hspace{2.5 cm} \text{and} \hspace{1.5cm}    \widetilde B \colon  [0,1]\mapsto \R\\
\nonumber s&\to B_{s\wedge a_1} \hspace{5.1 cm} s \to B_{a_s}
\end{align}
\begin{align}
\widehat  D \colon  [0,\infty)&\mapsto \R \hspace{2.5 cm} \text{and} \hspace{1.5cm}    \widetilde D \colon  [0,1]\mapsto \R\\
\nonumber s&\to D_{s\wedge a_1} \hspace{5.1 cm} s \to D_{a_s}
\end{align}
\smallskip

 We denote by $\widetilde R_{L,\beta}$
the law of $(|\widetilde  V_L|, \widetilde  M_L)$ when  $V$ is sampled from
$\mathbf{P}_{\beta_c}\bigl(\cdot \mid L\in \mathfrak{X}\bigr)$ and we state Theorem \ref{midlaux} 
which is the counterpart of Theorem \ref{midl} with $\widetilde  V_L, \widetilde  M_L , \widetilde  B, \widetilde D$
instead of  $\widehat  V_L, \widehat  M_L ,  \widehat B, \widehat D$. In Section \ref{step2}, we will display an explicit link between those quantities with 
equations \eqref{rattach} and \eqref{rattach2} and we will prove that  Theorem \ref{midl} is a consequence of Theorem \ref{midlaux}.

\begin{atheo} \label{midlaux}
For $\beta>0$, 
\begin{equation}
\widetilde R_{L,\beta}\xrightarrow[L\to \infty]{d} \big (|B_{a_s}|, D_{a_s}\big)_{s\in [0,1]}
\end{equation} 
where
$B$ and $D$ are independent Brownian motions of variance $\sigma_\beta^2$
and $a$ is the inverse function of the geometric area swept by $B$ and where $B$ is considered under the conditioning $\{B_{a_1}=0\}$.
\end{atheo}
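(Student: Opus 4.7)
The plan is to implement the reconstruction strategy outlined in Remark \ref{tzl}, carrying out features (a)--(d) while handling the two additional difficulties mentioned there (the coupling of the profile with the center-of-mass walk, and the conditioning on geometric area rather than on length). The time change $s \mapsto \xi_{sL}$ has precisely been chosen so that an excursion of $V$ with geometric area $N$ occupies an interval of length $N/L$ on the rescaled horizontal axis, which matches in the limit the time change $s \mapsto a_s$ for the Brownian motion $B$, since $a$ reads off times by accumulated area.

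\textbf{Step 1 (coupling and reduction to finitely many large excursions).} By feature (a) and Skorohod's representation theorem, I couple $\mathfrak{X}_L/L$ under $\mathbf{P}_{\beta_c}(\cdot\mid L\in \mathfrak{X})$ with $\mathfrak{X}_\infty=\{s\in[0,1]\colon B_{a_s}=0\}$ so that the convergence is almost sure (in Hausdorff distance on closed subsets of $[0,1]$). Using feature (d), for every $\varepsilon>0$ I fix $K=K(\varepsilon)$ such that with high probability the $K$ largest inter-arrivals of $\mathfrak{X}_\infty$ in $[0,1]$ have total length at least $1-\varepsilon/2$; by the a.s.\ convergence of $\mathfrak{X}_L/L$, the same statement holds for the $K$ largest inter-arrivals of $\mathfrak{X}_L$ for $L$ large. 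These $K$ large inter-arrivals select a finite number of ``macroscopic'' excursions of $V$ whose geometric areas, rescaled by $L$, converge almost surely to the corresponding excursion areas of $|B|$ in $[0,a_1]$.

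\textbf{Step 2 (convergence on the large excursions).} Conditionally on $\mathfrak{X}_L$, feature (b) gives independence of the excursions (in modulus), and feature (c) (Theorem \ref{convexc} in the companion paper \cite{CarPet17b}) gives, for each macroscopic excursion of prescribed geometric area $N_L^{(j)}\sim c_j L$, the joint convergence of the rescaled excursion $|V|/L^{1/3}$ and its rescaled center-of-mass walk $M/L^{1/3}$ to a Brownian excursion of area $c_j$ and an independent Brownian motion on an interval of length $c_j$. A further application of Skorohod's theorem allows me to realise these convergences simultaneously for $j=1,\dots,K$ on a common probability space, so that on the $K$ large excursions the pair $(|\widetilde V_L|,\widetilde M_L)$ converges almost surely to the corresponding restriction of $(|B_{a_s}|,D_{a_s})$.

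\textbf{Step 3 (control of the short excursions).} This is where I expect the main obstacle. On the profile, the short excursions can be handled crudely since their total length on the $s$-axis is at most $\varepsilon$ and their heights are $o(L^{1/3})$ uniformly, so $|\widetilde V_L|$ is negligible there. For the center-of-mass walk the situation is genuinely more delicate: $M$ does not return to $0$ at the end of a $V$-excursion, so even collectively the short excursions displace $\widetilde M_L$ by an amount that must be carefully estimated. This is exactly what Propositions \ref{proposui} and \ref{propfin} are designed for; together they show that along the short excursions the increments of $\widetilde M_L$, rescaled by $L^{1/3}$, behave like independent centered Gaussian variables with variance $\sigma_\beta^2$ times the aggregated $s$-length of the short gaps, and moreover are asymptotically independent of the excursion profile. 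Combining this with Step~2 yields joint convergence of $(|\widetilde V_L|, \widetilde M_L)$, on the large excursions and on the short gaps between them, to a process that on the $K$ large gaps coincides with the restriction of $(|B_{a_s}|,D_{a_s})$ and on the short gaps is a small Brownian interpolation of $D_{a_s}$ (the profile being negligible).

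\textbf{Step 4 (assembly and passage to the limit).} Letting $\varepsilon\to 0$ (equivalently $K\to\infty$), the concatenation obtained in Steps~2--3 converges to $(|B_{a_s}|,D_{a_s})_{s\in[0,1]}$ under the conditioning $B_{a_1}=0$, because $|B_{a_\cdot}|$ is reconstructed from its excursion decomposition on $\mathfrak{X}_\infty$ while the independent Brownian motion $D_{a_\cdot}$ is recovered from its independent Gaussian increments on the gaps. A diagonal argument over $\varepsilon$ finishes the convergence in distribution in the Skorohod topology, giving Theorem \ref{midlaux}. The hardest point, as emphasised in items 2 and 3 of the introduction, will be Step 3: establishing the asymptotic independence of $B$ and $D$ and quantifying the fluctuations of $M$ along the short excursions, which is what the fine estimates of Propositions \ref{proposui} and \ref{propfin} and the three-component local limit theorem of \cite{CarPet17b} are used to secure.
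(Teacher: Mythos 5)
Your outline follows the same route as the paper (truncation of the profile and the center-of-mass walk outside the excursions of geometric area at least $L/k$, Propositions \ref{proposui} and \ref{propfin} for the small excursions, Theorems \ref{convhaus} and \ref{convexc} plus Skorohod couplings for the large ones, and a reassembly which is the content of Proposition \ref{propded}), but as written it has genuine gaps. First, Step 3 misstates what Propositions \ref{proposui} and \ref{propfin} provide: they show that the aggregate displacement of $\widetilde M_L$ (resp.\ of $\widetilde D$) accumulated over \emph{all} small excursions is uniformly negligible as $k\to\infty$ --- the truncated process $\widetilde M_{L,k}$ is frozen there --- not that these increments are asymptotically Gaussian with variance $\sigma_\beta^2$ times the aggregated $s$-length of the short gaps, nor that they are asymptotically independent of the profile. (Your variance heuristic is also off: the fluctuation of $M$ over a block of excursions scales with their total time-length on the $L^{2/3}$ scale, not with their area, which is what the $s$-variable measures.) No distributional identification of the short-gap contribution is available or needed; the correct assembly is the two-sided uniform approximation ($\widetilde R_{L,\beta}$ close to $\widetilde R_{L,\beta}^{\,k}$, $(\widetilde B,\widetilde D)$ close to $(\widetilde B^k,\widetilde D^k)$, and $\widetilde R_{L,\beta}^{\,k}\to(\widetilde B^k,\widetilde D^k)$), so the ``small Brownian interpolation of $D_{a_s}$ on the gaps'' in Steps 3--4 should be removed rather than proved.

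Second, the identification of the limit on the large excursions is more delicate than Steps 2 and 4 acknowledge. The excursions of $V$ are independent only in modulus; the contribution of the $j$-th large excursion to the center-of-mass walk enters with the sign $\alpha_{j,L}=(-1)^{\tau_{e_j-1}}\sign(V_{\tau_{e_j}-1})$ of \eqref{defalpha}, which depends on the parity of its random starting time and on the alternation of signs through all preceding (including small) excursions, so the large-excursion contributions to $\widetilde M_L$ are not simply independent of each other and of the profile as claimed; the paper removes these signs by conditioning on a suitable filtration and using that $(\cE,D)$ and $(\cE,-D)$ have the same law, and your proposal contains no substitute for this step. Finally, gluing area-normalized Brownian excursions on the inter-arrivals of the limiting regenerative set together with independent Brownian displacements does produce a limit process, but identifying it with $(|B_{a_s}|,D_{a_s})_{s\in[0,1]}$ under the conditioning $B_{a_1}=0$ requires Theorem \ref{Bessel} (the zero set of $|B_{a_\cdot}|$ under this conditioning is distributed as $\widetilde C_{1/3}$ and its excursion law is $\gamma_{\cE}$); your Step 4 asserts this identification without invoking it, and without it the reconstruction only yields \emph{some} process built over $\widetilde C_{1/3}$, not the stated one.
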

The proof of Theorem \ref{midlaux} is the object of Section \ref{prmidlaw} below.

%

\subsection{Outline of the proof of Theorem \ref{midlaux}}\label{prmidlaw}

Our proof of  Theorem \ref{midlaux} relies on the renewal structure introduced in (\ref{premm}--\ref{defX}) above and it may be divided into three steps.
\begin{enumerate}
\item handling small excursions of random walk with Proposition \ref{proposui}, of Brownian motions with Proposition \ref{propfin},
\item handling large excursions with Theorems \ref{convhaus}, \ref{convexc} and \ref{Bessel},
\item reconstructing the limiting process with Proposition \ref{propded}.
 \end{enumerate}

 Here, the geometric area 
of each excursion will be of particular importance. For $k\in \N$, we will indeed truncate the rescaled profile $|\widetilde V_L|$ and the rescaled center-of-mass walk  $\widetilde M_L$ 
(respectively the time-changed Brownian motions $|\widetilde B|$ and $\widetilde D$)  outside the excursions of $V$  (resp. $B$) sweeping a geometric area
larger than $L/k$ (resp. $1/k$) to obtain  $|\widetilde V_{L,k}|$ and $\widetilde M_{L,k}$ (resp. $|\widetilde B^k|$ and $\widetilde D^k$). Then,  the proof of Theorem \ref{midlaux} will be organized as follows. 
With Proposition \ref{proposui} (proven in  Section \ref{step4}),  we state that provided $k$ and $L$ are  large enough,  
 $||\widetilde M_L-\widetilde M_{L,k}||_\infty+||\widetilde V_L-\widetilde V_{L,k}||_\infty$ is arbitrary small in probability.  With Proposition \ref{propfin} (proven in Section \ref{prpropfin}) we prove that,
 provided $k$ is large enough  
 $||\widetilde B-\widetilde B^k||_\infty+||\widetilde D-\widetilde D^k ||_\infty$ also is arbitrarily small in probability.
 Finally, with Proposition \ref{propded} (proven in Section \ref{step3}), we provide a simplified version of Theorem \ref{midlaux} by substituting the truncated processes $(|\widetilde V_{L,k}|,\widetilde M_{L,k})$ to $(|\widetilde V_L|$, $\widetilde M_{L})$ and $(\widetilde B^k, \widetilde D^k)$ to $(|\widetilde B|$, $\widetilde D)$, respectively.    
 Those three propositions imply Theorem \ref{midlaux}.

 \begin{remark}\label{frombtoub}
For  sake of conciseness,  we will display the proof of Theorem \ref{midlaux} under the law ${\bf P}_{\beta,\mu_\beta}$ instead of ${\bf P}_{\beta}$. The only difference between those two laws is 
that, under ${\bf P}_{\beta,\mu_\beta}$ the law of $V_0$  is $\mu_\beta$ (defined in \eqref{defubeta}) which is symmetric on $\Z$ with an exponential tail. Proving Theorem \ref{midlaux} under ${\bf P}_\beta$ is not more difficult but (and this is explained in Proposition \ref{pro:renstruccritical} below) it would force us to consider separately the very first excursion of each path from all the other excursions. This distinction 
is not necessary anymore under ${\bf P}_{\beta,\mu_\beta}$ and this lightens the proofs a little bit.

 \end{remark}
\smallskip

\noindent {\it Truncation of the profile and center-of-mass walk.} We recall \eqref{defM} and we observe that the center-of-mass walk can be written as 
\be{defM2}
M_{i}=-\frac{V_0}{2}+\sum_{j=1}^{i} (-1)^{j-1} \frac{V_j-V_{j-1}}{2}=\sum_{j=0}^{i-1} (-1)^{j-1} V_j+ (-1)^{i-1} \frac{V_i}{2} ,\quad i\in \N.
\ee
We recall \eqref{premm} and for every $r\in\N$, we let $M^{\text{exc}}(r)$ be  the contribution of the $r$-th excursion to the center-of-mass walk, i.e., 
\be{defexcmid}
M^{\text{exc}}(r)=\sum_{i=\tau_{r-1}}^{\tau_r-1} \, (-1)^{i-1} \, V_i.
\ee 
For  $x\in \N$, we truncate $V$ outside the excursions of geometric area larger than $x$ to obtain $(V_x^+(i))_{i\in \N\cup\{0\}}$. Similarly, with the help of \eqref{defexcmid} we define the discrete process $(M^+_{x}(i))_{i\in \N\cup\{0\}}$
which remains constant outside 
the excursions of geometric area larger than $x$ and follows the center-of-mass walk elsewhere, i.e., for every $t\in \N$ and $i\in \{\tau_{t-1},\dots,\tau_t-1\}$
\begin{align}\label{tronc3}
M_{x}^+(i)&:=\sum_{r=1}^{t-1}\,  M^{\text{exc}}(r) \, \ind_{\{X_r\geq x\}}+ \Big[\sum_{j=\tau_{t-1}}^{i-1} (-1)^{j-1} V_j + 
(-1)^{i-1} \frac{V_i}{2}\Big] \,  \ind_{\{X_t\geq x\}},\\
\nonumber V_x^+(i)&:=V_i \,  \ind_{\{X_t\geq x\}}. 
\end{align}
For $k\in \N$, we let  $\widetilde V_{L,k}$ and $\widetilde M_{L,k}$ be the cadlag processes obtained from  $V_{L/k}$ and $M_{L/k}$ as we obtained $\widetilde M_{L}$ from  $M_{L}$,  i.e., 
\begin{align}\label{tronc}
\widetilde M_{L,k}(s)&:=\frac{1}{L^{1/3}} M^+_{L/k}(\xi_{sL}), \quad s\in [0,1],\\
\nonumber \widetilde V_{L,k}(s)&:=\frac{1}{L^{1/3}} V^+_{L/k}(\xi_{sL}), \quad s\in [0,1].
\end{align}

\noindent {\it Truncation of Brownian motion.} As in the discrete case, we truncate $\widetilde B$ and $\widetilde D$ outside the 
excursions of $B$ sweeping a geometric area larger than $1/k$ to obtain $\widetilde B^{\,k}$ and $\widetilde D^{\,k}$ 
, i.e.,
\begin{align}\label{defDk}
\widetilde D^k(s)&=\int_0^{a_s} \, \ind_{\Gamma_k}(u) \, d D_u,\\
\nonumber \widetilde B^k(s)&=\widetilde B_s  \, \ind_{\Gamma_k}(s),
\end{align}
where $\Gamma_k :=\ens{u >0: A_{d_u} -A_{g_u} \ge \unsur{k}}$ with
$d_u=d_u(B):=\inf\ens{t>u : B_t=0}$ , $g_u = \sup\ens{t< u : B_t =0}$
so that  $d_u - g_u$ (resp. $A_{d_u} -A_{g_u}$)  is
the length (resp. the geometric area) of the excursion straddling $u$.
\smallskip

%
With these truncated processes in hand we can state Propositions \ref{proposui} and \ref{propfin} in order to estimate the time-changed profile $|\widetilde V_L|$ and center-of-mass walk $\widetilde M_L$
and the time-changed  Brownian motions $\tilde B$ and  $\tilde D$ with their truncated versions. 
\begin{proposition}\label{proposui}
 For every $\gep>0$,
\begin{align}\label{trza}
\lim_{k\to \infty} &\limsup_{L\to \infty} \probmubeta{ \sup_{s\in [0,1]} \big|\,  \widetilde V_L(s)-\widetilde V_{L,k}(s)\, \big|\geq \gep\   \Big | \  L\in \mathfrak{X}\ }=0,\\
\lim_{k\to \infty} &\limsup_{L\to \infty} \probmubeta{ \sup_{s\in [0,1]} \big|\widetilde M_L(s)-\widetilde M_{L,k}(s)\big|\geq \gep\   \Big | \  L\in \mathfrak{X}\ }=0,
\end{align}
\end{proposition}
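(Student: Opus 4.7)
The plan is to unwrap the definitions, isolate the contributions from the short excursions (those with $X_r<L/k$) to each of the two differences, and then bound them via the i.i.d.\ structure of the excursions under ${\bf P}_{\beta,\mu_\beta}$ (the shift of initial law allowed by Remark~\ref{frombtoub}) together with second-moment estimates for $\max_i|V_i|$ and for $M^{\mathrm{exc}}(r)$ conditional on the excursion area, which are supplied by the joint local limit theorem of the companion paper \cite{CarPet17b}. Throughout I use that $X_r$ has stable tail $\bP_{\beta,\mu_\beta}(X_r\ge x)\sim c x^{-1/3}$, so that the number $v_L$ of completed excursions before cumulative area $L$ satisfies $v_L\le CL^{1/3}$ with overwhelming probability, while $\bP_{\beta,\mu_\beta}(L\in\mathfrak{X})\ge cL^{-2/3}$ by standard renewal/local-limit estimates.

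From \eqref{tronc3}--\eqref{tronc}, if $\xi_{sL}$ lies inside the $t$-th excursion, then $L^{1/3}[\widetilde V_L(s)-\widetilde V_{L,k}(s)]=V_{\xi_{sL}}\ind_{\{X_t<L/k\}}$, so the supremum in \eqref{trza} is bounded by
\[
L^{-1/3}\max_{\substack{r\le v_L\\ X_r<L/k}}\ \max_{\tau_{r-1}\le i<\tau_r}|V_i|.
\]
The local limit theorem of \cite{CarPet17b} provides Gaussian-type conditional tails $\bP_{\beta,\mu_\beta}(\max|V_i|\ge\lambda A^{1/3}\mid X_r=A)\le C e^{-c\lambda^2}$ uniform in $A$. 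A dyadic decomposition of $[1,L/k]$ by excursion area---using that only $O((2^jk)^{1/3})$ excursions among the $v_L\sim L^{1/3}$ have area $\asymp L/(2^jk)$---combined with a union bound with $\lambda\asymp\sqrt{\log k}$ gives $\max_i|V_i|\le C(L/k)^{1/3}\sqrt{\log k}$ simultaneously on all short excursions, with probability going to one. Rescaling produces an error of order $k^{-1/3}\sqrt{\log k}\to 0$ as $k\to\infty$.

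For the center-of-mass difference, one decomposes
\[
L^{1/3}\bigl[\widetilde M_L(s)-\widetilde M_{L,k}(s)\bigr]=S^k_{t-1}+R_t(\xi_{sL})\ind_{\{X_t<L/k\}},
\]
with $S_t^k:=\sum_{r\le t}M^{\mathrm{exc}}(r)\ind_{\{X_r<L/k\}}$ and $R_t(i)$ the running center-of-mass partial sum inside the $t$-th excursion. Since $|R_t(i)|\le\max_i|V_i|$ on the current excursion, the residual $R_t$ is controlled exactly as in the profile estimate above. The main term $S^k$ is a mean-zero $L^2$-martingale under ${\bf P}_{\beta,\mu_\beta}$ (the $V\mapsto -V$ symmetry together with the i.i.d.\ excursions). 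With the conditional bound $\bE_{\beta,\mu_\beta}[M^{\mathrm{exc}}(r)^2\mid X_r=A]\le CA^{2/3}$ from \cite{CarPet17b} and the local tail $\bP_{\beta,\mu_\beta}(X_r=A)\le CA^{-4/3}$,
\[
\bE_{\beta,\mu_\beta}\bigl[M^{\mathrm{exc}}(r)^2\,\ind_{\{X_r<L/k\}}\bigr]\le C\sum_{A\le L/k}A^{-2/3}\le C(L/k)^{1/3},
\]
so Doob's $L^2$ maximal inequality applied on $\{v_L\le CL^{1/3}\}$ yields $\bE\sup_t(S_t^k)^2\le CL^{2/3}k^{-1/3}$, i.e.\ an error $O(k^{-1/6})$ after rescaling.

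Finally, to transfer these estimates from ${\bf P}_{\beta,\mu_\beta}$ to the conditional law ${\bf P}_{\beta,\mu_\beta}(\cdot\mid L\in\mathfrak{X})$, one invokes the renewal lower bound $\bP_{\beta,\mu_\beta}(L\in\mathfrak{X})\ge cL^{-2/3}$ and strengthens the Doob step to $L^p$ with $p$ large enough that the exceptional probabilities remain $o(L^{-2/3})$. The chief obstacle is the conditional moment bounds on $M^{\mathrm{exc}}(r)$ and on the excursion maximum uniform in $X_r=A$, which rely on the full strength of the joint local limit theorem of \cite{CarPet17b}; once those inputs are in hand, the dyadic union bound for the profile and the Doob estimate for the center of mass are essentially mechanical, and the switch to the conditional measure is a routine cost-of-conditioning argument.
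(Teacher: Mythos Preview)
There are two substantive gaps.

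\medskip
\textbf{The martingale claim.} You assert that $S^k_t=\sum_{r\le t}M^{\mathrm{exc}}(r)\ind_{\{X_r<L/k\}}$ is a mean-zero martingale ``by $V\mapsto -V$ symmetry together with the i.i.d.\ excursions''. The global symmetry only gives that each $M^{\mathrm{exc}}(r)$ is \emph{marginally} centered; it does not give conditional centering given $\cF_{r-1}$. By the excursion reconstruction (Definition~\ref{deftiv}), when the $r$-th excursion starts at a nonzero value its sign is forced to be $-\sign(V_{\tau_{r-1}-1})$, and one computes (this is \eqref{condesp})
\[
\espmubeta{M^{\mathrm{exc}}(r)\ind_{\{X_r\le L/k\}}\mid \cF_{r-1}}=-(-1)^{\tau_{r-1}}\sign(V_{\tau_{r-1}-1})\,\varphi_1(L/k),
\]
with $\varphi_1(x)=\espmubeta{\ind_{\{V_0\neq 0\}}R_1\ind_{\{X_1\le x\}}}$ not identically zero. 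The paper therefore splits $S^k=Q-J$ with $Q$ a genuine martingale and compensator $J_n=\varphi_1(L/k)\sum_{r=2}^n(-1)^{\tau_{r-1}}\sign(V_{\tau_{r-1}-1})$; controlling $J$ requires a separate tightness argument (grouping excursions between successive visits of $V$ to $0$ and exploiting the independent Bernoulli signs $\gep_j$) that your proposal omits. Relatedly, the deterministic bound $|R_t(i)|\le\max_i|V_i|$ you invoke for the running piece is false: an alternating sum $\sum_j(-1)^{j-1}V_j$ over a stretch where $V$ keeps a fixed sign can grow linearly in the number of terms, which is why $\max_{j:X_j\le L/k}F_j$ needs its own argument (Claim~\ref{c2}, via the cyclic-shift technique).

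\medskip
\textbf{Removing the conditioning.} Your transfer to $\probmubeta{\cdot\mid L\in\mathfrak X}$ does not work as stated. The Doob bound you obtain, of the form $\probmubeta{\sup_t|S^k_t|\ge\gep L^{1/3}}\le Ck^{-1/3}\gep^{-2}$, carries no decay in $L$; passing to higher $L^p$ moments only improves the power of $k$. Dividing by $\probmubeta{L\in\mathfrak X}\asymp L^{-2/3}$ therefore yields a bound that diverges as $L\to\infty$ for each fixed $k$, so the $\limsup_L$ is infinite. The paper circumvents this with Lemma~\ref{rem}: after a time-reversal splitting (see \eqref{T1}--\eqref{equala}) one only needs the functional on the first $v_{3L/4}$ excursions, and on that $\sigma$-algebra the Radon--Nikodym density of the conditional law with respect to the unconditional one is \emph{uniformly bounded}, not of order $L^{2/3}$.
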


\begin{proposition}\label{propfin}
For every $\gep>0$,
\begin{align}\label{trza2}
\lim_{k\to \infty} \prob{ \sup_{s\in [0,1]} \big| \widetilde B(s)-\widetilde B^k(s)\big|\geq \gep }&=0,\\
\lim_{k\to \infty} \prob{ \sup_{s\in [0,1]} \big|\widetilde D(s)-\widetilde D^k(s)\big|\geq \gep }&=0.
\end{align}
where
$B$ and $D$ are defined as in Theorem \ref{midlaux}.
\end{proposition}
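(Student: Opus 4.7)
Both bounds will be reduced to two almost-sure structural properties of the path $(B_u)_{u\in[0,a_1]}$: (i) $a_1<\infty$ a.s., and (ii) for every $\gep>0$ the path has only finitely many excursions on $[0,a_1]$ whose amplitude exceeds $\gep$. Since every Brownian excursion of positive duration has strictly positive area, (i)--(ii) imply that
\[
\alpha(\gep):=\min\bigl\{A_{d_u}-A_{g_u}\colon u\in[0,a_1],\ \sup_{[g_u,d_u]}|B|>\gep\bigr\}
\]
is a.s.\ strictly positive, being the minimum of a finite collection of positive numbers. These facts, together with the independence of $D$ and $B$, are preserved under the singular conditioning $\{B_{a_1}=0\}$ used throughout.

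\emph{First bound.} From the very definition of $\widetilde B^k$, one has $|\widetilde B(s)-\widetilde B^k(s)|=|B_{a_s}|\ind_{\Gamma_k^c}(a_s)$, and since $s\mapsto a_s$ is a continuous bijection from $[0,1]$ onto $[0,a_1]$ the change of variable $u=a_s$ gives
\[
\sup_{s\in[0,1]}\bigl|\widetilde B(s)-\widetilde B^k(s)\bigr|
=\sup\bigl\{|B_u|\colon u\in[0,a_1],\ A_{d_u}-A_{g_u}<\tfrac1k\bigr\}.
\]
For any $\gep>0$ and any $k>1/\alpha(\gep)$, every excursion of area below $1/k$ has amplitude at most $\gep$, so the right-hand side is bounded by $\gep$. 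This yields convergence to $0$ a.s., hence in probability.

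\emph{Second bound.} A direct computation gives $\widetilde D(s)-\widetilde D^k(s)=W_k(a_s)$ with $W_k(t):=\int_0^t\ind_{\Gamma_k^c}(u)\,dD_u$. Conditionally on $B$, the integrand is deterministic and $W_k$ is a continuous $D$-martingale with bracket $\langle W_k\rangle_t=\int_0^t\ind_{\Gamma_k^c}(u)\,du$. Using the bijectivity of $a$, Doob's maximal inequality yields
\[
\bE\Bigl[\sup_{s\in[0,1]}\bigl(\widetilde D(s)-\widetilde D^k(s)\bigr)^2\,\Big|\,B\Bigr]
\leq 4\int_0^{a_1}\ind_{\Gamma_k^c}(u)\,du=:4\eta_k.
\]
The quantity $\eta_k$ is the total length of excursions of $B$ on $[0,a_1]$ whose area is less than $1/k$; by (i) and the strict positivity of every individual excursion area, $\eta_k\downarrow 0$ a.s.\ as $k\to\infty$. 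Markov's inequality conditional on $B$, followed by dominated convergence (the conditional probabilities take values in $[0,1]$), delivers the desired vanishing in probability. The main obstacle I anticipate is not any of these soft steps but rather the bookkeeping required to make the two structural properties (i)--(ii) and the independence $B\perp D$ fully rigorous under the specific construction of $B$ conditioned on $\{B_{a_1}=0\}$ that the paper employs.
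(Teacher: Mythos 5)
Your proposal is correct, and for the first bound it takes a genuinely different route from the paper. The paper controls $|\widetilde B(s)-\widetilde B^k(s)|=|B(a_s)-B(g(a_s))|\,\un{a_s\notin\Gamma_k}$ by a modulus-of-continuity estimate: it first chooses $A$ with $\prob{a_1\ge A}$ small (using $\esp{a_1}<+\infty$), then invokes Kolmogorov's continuity criterion on $[0,A]$ and a small moment of the H\"older constant, bounding the age $a_s-g(a_s)$ of a non-truncated excursion by $1/k$. Your argument is softer: on the compact interval $[0,a_1]$ only finitely many excursions have amplitude above $\gep$, each has strictly positive area, so for $k$ larger than the reciprocal of the minimal such area every excursion of area below $1/k$ has amplitude at most $\gep$; this yields almost sure convergence of the supremum with no continuity-modulus or moment input, and it in fact sidesteps the delicate point that a small area does not pathwise bound the excursion length, which the paper's displayed inequality implicitly uses. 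For the second bound both arguments are Doob maximal inequalities for the martingale $\int_0^{\cdot}\un{u\notin\Gamma_k}\,dD_u$; the paper works unconditionally, writes $\esp{N(1)^2}=\int_0^\infty\prob{u\notin\Gamma_k,\,u<a_1}\,du$ and needs $\esp{a_1}<+\infty$ (proved via the Bessel bridge of dimension $4/3$ and its density) to apply dominated convergence, whereas you condition on $B$, bound the conditional second moment by a constant times $\int_0^{a_1}\un{u\notin\Gamma_k}\,du$, and only need $a_1<+\infty$ almost surely. The one ingredient you leave unverified, and correctly flag, is exactly what the paper supplies: under the singular conditioning $\{B_{a_1}=0\}$ the process is realized (Theorem \ref{Bessel}) so that $a_1=\int_0^1 |B_{a_s}|^{-1}\,ds$ is almost surely finite (the paper even obtains integrability from the Bessel bridge density) and $D$ remains an independent Brownian motion; with that input both of your steps close.
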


Theorem  \ref{convhaus}, proven for instance in \cite[Proposition A.8]{CSZ}, claims that 
once rescaled by $L$ and when sampled from  $\mathbf{P}_{\beta}(\cdot\ |\ L\in \mathfrak{X})$ the set $\mathfrak{X}\cap [0,L]$ converges in law (in the space of closed subsets of $[0,1]$ endowed with the Hausdorff
distance) towards $\widetilde C_{1/3}:=C_{1/3}\cap[0,1]$ conditioned on $1\in C_{1/3}$ where $C_{1/3}$ is the  $1/3$-stable regenerative set.

\begin{atheo} \label{convhaus}
For $L\in \N$,  we  let $\mathfrak{X}$ be sampled from $\mathbf{P}_{\beta,\mu_\beta}(\cdot\ |\ L\in \mathfrak{X})$, then,
\be{limXX}
\lim_{L\to \infty} \frac{\mathfrak{X}\cap [0,L]}{L}=_{\text{Law}} \widetilde C_{1/3}.
\ee
\end{atheo}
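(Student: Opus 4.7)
The plan is to view Theorem \ref{convhaus} as an instance of a general convergence result for renewal sets whose increments are i.i.d.\ and lie in the domain of attraction of a $1/3$-stable subordinator. Concretely, I would verify two properties of the step variable $X_1 = \mathfrak{N}_1 + \mathfrak{A}_1$:
(a) under $\mathbf{P}_{\beta,\mu_\beta}$ the sequence $(X_i)_{i\ge 1}$ is i.i.d., and
(b) $\mathbf{P}_{\beta,\mu_\beta}(X_1 > t) \sim c_\beta\, t^{-1/3}$ as $t\to\infty$.
Once (a)-(b) are established, the Hausdorff convergence of the rescaled renewal range to $C_{1/3}\cap[0,1]$, together with the conditioning on $L\in\mathfrak{X}$ matching the continuous conditioning $1\in C_{1/3}$, is precisely the statement of \cite[Proposition A.8]{CSZ}, which I would invoke as a black box.

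For (a), I would appeal to Proposition \ref{pro:renstruccritical} (referred to in Section \ref{prmidlaw}), which expresses the renewal structure of $V$ as independent excursions between successive stopping times $\tau_k$ defined in \eqref{premm}. The point of working with the initial law $\mu_\beta$ (cf.\ Remark \ref{frombtoub}) is precisely to make the first excursion have the same distribution as every subsequent one, so that the strong Markov property applied at $(\tau_k)_{k\ge 0}$ yields that the pairs $(\mathfrak{N}_k,\mathfrak{A}_k)$, and hence the increments $X_k$, are genuinely i.i.d.

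For (b), I would decompose $\{X_1>t\}$ according to the length $\mathfrak{N}_1=n$. Conditionally on $\mathfrak{N}_1=n$, the excursion of $V$ has the law of a Laplace-distributed random walk bridge staying on one side of the origin; its typical geometric area is of order $n^{3/2}$ by Brownian scaling. Combined with the tail estimate $\mathbf{P}_{\beta,\mu_\beta}(\mathfrak{N}_1>n)\sim c/\sqrt{n}$ (valid for any symmetric random walk with finite variance, via the reflection principle and the local central limit theorem for $V$), the change of variable $t\leftrightarrow n^{3/2}$ then gives $\mathbf{P}_{\beta,\mu_\beta}(X_1>t)\sim c_\beta\,t^{-1/3}$. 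A concentration estimate on the area of a long excursion around its typical value $n^{3/2}$ is needed to make the substitution rigorous, but this is standard for random walks with exponential moments.

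The main obstacle is the last step, which is the conversion of the unconditioned convergence of $\mathfrak{X}\cap[0,L]$ (rescaled by $L$) into the conditional statement on $\{L\in\mathfrak{X}\}$. Since $\{1\in C_{1/3}\}$ has probability zero, one has to define the limit either via Palm calculus on the closure of the regenerative set or by approximation with fattened events $\{C_{1/3}\cap[1-\epsilon,1+\epsilon]\neq\emptyset\}$. On the discrete side this requires a sharp renewal estimate $\mathbf{P}_{\beta,\mu_\beta}(L\in\mathfrak{X})\asymp L^{-2/3}$, which follows from the Doney-type local limit theorem for renewals in the domain of attraction of a $1/3$-stable law. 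The matching of these two asymptotic orders is what makes the conditional convergence well-defined, and it is precisely this point that is encapsulated in the statement borrowed from \cite{CSZ}.
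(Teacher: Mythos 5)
Your proposal follows essentially the same route as the paper: the paper treats Theorem \ref{convhaus} exactly as the reduction you describe, namely the i.i.d.\ excursion structure from Proposition \ref{pro:renstruccritical}, the tail and renewal estimates \eqref{tailest} quoted from \cite[Lemma 4.6]{CP15} (which also supply the matching order ${\bf P}_{\beta,\mu_\beta}(L\in\mathfrak{X})\asymp L^{-2/3}$ needed to make sense of the conditioning), and then \cite[Proposition A.8]{CSZ} invoked as a black box. The only deviation is that you re-derive the $1/3$-stable tail of $X_1$ by decomposing over $\mathfrak{N}_1$ instead of citing \cite{CP15}; this does give the right regular-variation index, but it should be phrased through the non-degenerate limit law of the excursion area rescaled by $n^{3/2}$ (the Brownian-excursion area) rather than a concentration around a typical value $n^{3/2}$ — a point that affects only the constant, not the exponent required by \cite{CSZ}.
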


Theorems \ref{convexc} and \ref{Bessel} below are proven in a
companion paper \cite{CarPet17b}. 
 With Theorem \ref{convexc}, we state that, a random walk excursion
 (together with its center-of-mass walk) conditioned to have a prescribed area $L$, properly rescaled and  subject to an adhoc time change converge towards a Brownian excursion normalized by its area  (together with an independent Brownian motion)
also subject to a similar time change.

\begin{atheo} \label{convexc}
We consider $V$ sampled from $\probmubeta{\cdot\ |\ X_1=L}$, then,
\begin{align}\label{vv}
\lim_{L\to \infty} \Big (\valabs{\widetilde V_L(s)}, \widetilde M_L(s)\Big)_{s\in [0,1]}=_{\text{Law}}  \big(\cE_{a_s}, B_{a_s} \big)_{s\in [0,1]}
\end{align}
where $\cE$ is a Brownian excursion normalized by its area and $a$ is the inverse function of this 
area and where $B$ is a standard brownian motion independent of $\cE$.
\end{atheo}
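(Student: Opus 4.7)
My plan is to combine an invariance principle for random walk excursions of prescribed geometric area with an asymptotic independence argument for the center-of-mass walk, and close with a continuous-mapping argument for the time change. The key structural observation is the orthogonal decomposition $V_i = S_o^{(i)} + S_e^{(i)}$ and $2M_i = S_o^{(i)} - S_e^{(i)}$, where $S_o^{(i)} = \sum_{j \leq i,\, j \text{ odd}} U_j$ and $S_e^{(i)}$ is the analogous sum over even indices. Since the $U_j$'s are i.i.d.\ symmetric Laplace, the pair $(S_o^{(i)}, S_e^{(i)})$ is independent with equal variance, so $(V, 2M)$ is the image of two independent random walks of identical law under a fixed orthogonal rotation. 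Any event depending only on $V$ pins the ``sum'' direction $S_o + S_e$ while leaving the ``difference'' direction $S_o - S_e$ free, up to a correlation that vanishes in the Gaussian limit.

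Under $\probmubeta{\cdot \mid X_1 = L}$, the length $\tau_1$ of the first excursion concentrates around a deterministic multiple of $L^{2/3}$, which follows from a local CLT for the joint law of $(V_n, K_n)$. Using the three-component local CLT for $(V_n, M_n, K_n)$ proven in \cite{CarPet17b} together with the orthogonal decomposition, I would establish the joint invariance principle on the natural time scale: setting $n_t := \lfloor t L^{2/3}\rfloor \wedge \tau_1$,
\[
\bigl( L^{-1/3} |V_{n_t}|,\ L^{-1/3} M_{n_t} \bigr)_{t \ge 0} \xrightarrow[L\to\infty]{d} \bigl( \cE_{t \wedge a_1},\ \mathfrak{b}_{t \wedge a_1} \bigr)_{t \ge 0},
\]
where $\cE$ is the Brownian excursion normalized by its area (defined on $[0, a_1]$) and $\mathfrak{b}$ is a Brownian motion of variance $\sigma_\beta^2$ independent of $\cE$. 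Asymptotic independence is delicate because $M$ is a deterministic function of $V$ at the discrete level; the resolution is that the macroscopic shape of $V$ captures only the averaged part of the $U_j$'s, whereas $M$ records alternating-sign contributions that become an orthogonal Gaussian summand in the limit thanks to the $(S_o, S_e)$ decomposition.

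The final step is the time change. Since $K_n = n + \sum_{j=1}^n |V_j|$ with $n = O(L^{2/3}) = o(L)$, the rescaled cumulated area $K_{\lfloor t L^{2/3}\rfloor}/L$ converges in probability to $A_t = \int_0^t \cE_u\,du$, uniformly on compacts. Strict monotonicity and continuity of $A$ on $[0, a_1]$ transfer to continuity of the inverse, whence $\xi_{sL}/L^{2/3} \to a_s$ uniformly for $s \in [0,1]$. A continuous-mapping argument in the Skorohod topology then transfers the invariance principle from the natural scale to the time-changed scale, producing the claimed limit $(\cE_{a_s}, B_{a_s})_{s\in[0,1]}$ with $B := \mathfrak{b}$. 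The main obstacle is the three-component local CLT itself: one needs sharp uniform control across the strong conditioning $\{X_1 = L\}$ and joint precision in position, center-of-mass, and area, generalizing the two-component result of \cite{DKW13}. This is the technical core of \cite{CarPet17b}, and without it neither the invariance principle under area conditioning nor the decoupling of $M$ from $V$ can be executed rigorously.
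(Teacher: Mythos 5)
Theorem \ref{convexc} is not proven in this paper at all: the text states explicitly that it (together with Theorem \ref{Bessel}) is established in the companion paper \cite{CarPet17b}, and the present paper only uses it as an input. Your proposal has essentially the same status, because the decisive ingredient in your outline --- the local limit theorem / invariance principle for the three-component process (position, center-of-mass walk, geometric area) under the singular conditioning $\{X_1=L\}$ --- is precisely what you defer to \cite{CarPet17b}. So what you have written is a reduction of the theorem to the companion paper rather than an independent proof; since the theorem essentially \emph{is} the main functional result of that companion paper, the reduction is close to circular. Your structural remarks are nevertheless sound and do reflect the underlying mechanism: the decomposition $V_i=S_o^{(i)}+S_e^{(i)}$, $2M_i=S_o^{(i)}-S_e^{(i)}$ with $S_o\perp S_e$ is the reason $M$ can decouple asymptotically from $V$ even though it is a deterministic function of $V$, and the passage from the natural clock to the area clock via uniform convergence of $K_{\lfloor tL^{2/3}\rfloor}/L$ and inversion of the area functional is the same mechanism the paper uses for the global path in Section \ref{step2}.

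Two concrete points in your sketch are wrong or incomplete as stated. First, $\tau_1$ does \emph{not} concentrate around a deterministic multiple of $L^{2/3}$: as quoted in this paper (just above \eqref{contborne2}, from \cite[Proposition 3.4]{CarPet17b}), $\tau_1/L^{2/3}$ under $\probmubeta{\cdot\mid X_1=L}$ converges in distribution to the \emph{random} duration $\mathfrak{R}$ of a Brownian excursion normalized by its area; a concentration statement would be false, and any argument built on a deterministic excursion length would break. Second, the continuous-mapping step for the time change is not automatic: the limit $\xi_{sL}/L^{2/3}\to a_s$ requires control of the time the conditioned walk spends at low heights (the integrand $1/(L^{-1/3}+|\widetilde V_L|)$ is unbounded near the excursion endpoints), which in this paper demands a genuine estimate of the type \eqref{tppr}--\eqref{grandlim} and not merely continuity and strict monotonicity of $A$. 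Finally, note that the conditioning $\{X_1=L\}$ together with positivity constrains \emph{all} partial sums $V_i=S_o^{(i)}+S_e^{(i)}$, not a single ``sum direction,'' so the asymptotic independence of the difference direction is genuinely delicate --- which is exactly why the three-component local limit theorem of \cite{CarPet17b} is needed and why no proof of Theorem \ref{convexc} can be given at the level of detail of your outline.
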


Let  $Y$ be distributed as $(\valabs{B_{a_s}})_{0\le s \le 1}$
conditioned by $B_{a_1}=0$. Then $Y$ is distributed as $\etp{\big(
  \frac{3}{2} \rho_{t}\big)^{2/3},t\in[0,1]}$ where 
 $(\rho_t)_{t\in [0,1]}$ is a Bessel bridge of dimension $\delta=4/3$.  
 We let $Z(Y)$ be the set of zeros of $Y$, i.e.,
$Z(Y)=\{s\in [0,1]\colon\, Y(s)=0\}$. We let also $\pi^Y$ be the law of an excursion of $Y$ renormalized by its extension  and let $\gamma_{\cE}$ be the law of $\cE_{a}$ defined in \eqref{vv} above.

\begin{atheo} \label{Bessel} 
The following equalities in distribution hold true,
\be{equadis}
Z(Y)=_{\text{Law}} \widetilde C_{1/3}\quad \quad \text{and} \quad \quad    \pi^Y= \gamma_{\cE}.
\ee
\end{atheo}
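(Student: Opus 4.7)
The plan is to first justify the Bessel-bridge representation asserted in the statement, then deduce both equalities of \eqref{equadis} from it. Set $X_s := |B_{a_s}|$. From $A_{a_s}=s$ one obtains $a'(s)=1/X_s$, and combining It\^o--Tanaka's formula for $|B|$ with this time change yields
\begin{equation*}
 dX_s = \frac{dW_s}{\sqrt{X_s}} + dL_{a_s},
\end{equation*}
where $W$ is a Brownian motion produced by Dambis--Dubins--Schwarz applied to $\int_0^\cdot \mathrm{sgn}(B_{a_u})\,dB_{a_u}$, and $L$ is the local time of $B$ at $0$. Setting $\rho_s := \tfrac{2}{3}\, X_s^{3/2}$ and applying It\^o's formula gives, away from $Z(Y)$,
\begin{equation*}
 d\rho_s = dW_s + \frac{1}{6\rho_s}\, ds,
\end{equation*}
the SDE of a Bessel process of dimension $\delta = 4/3$; since $B_{a_1}=0$ forces $\rho_1=0$, conditioning produces a Bessel bridge of dimension $4/3$ on $[0,1]$, which proves the representation.

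For the first identity in \eqref{equadis}, observe that $Y_s=0$ if and only if $\rho_s=0$, so $Z(Y)=Z(\rho)$. It is classical that the zero set of an unconditioned Bessel process of dimension $\delta\in(0,2)$ is a regenerative set whose inverse local time is a stable subordinator of index $\alpha=1-\delta/2$. With $\delta=4/3$ this yields $\alpha=1/3$, so that $Z(\rho)$ is distributed as $C_{1/3}$. The bridge conditioning $\rho_1=0$ is precisely the conditioning $1\in C_{1/3}$, hence $Z(Y)=_{\text{Law}}\widetilde{C}_{1/3}$.

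For the second identity I would work directly from $Y_s=|B_{a_s}|$. An excursion interval $(g,d)$ of $Y$ corresponds to an excursion interval $(a_g,a_d)$ of $|B|$ away from $0$, and its length $d-g = \int_{a_g}^{a_d} |B_u|\, du$ equals the geometric area of the underlying Brownian excursion. Hence renormalising the $Y$-excursion to length $1$ is, by Brownian scaling, equivalent to renormalising the corresponding Brownian excursion to area $1$. By It\^o's excursion theory, the latter is distributed as the process $\cE$ appearing in Theorem~\ref{convexc}; the time change $a$ on the $Y$-side then becomes the inverse area $a^\cE$ of $\cE$, so the normalised $Y$-excursion reads $(\cE_{a_s^\cE})_{s\in[0,1]}$, which is precisely $\gamma_{\cE}$. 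The main technical obstacle is making this excursion-theoretic disintegration rigorous under the nonstandard renormalisation by area rather than by length: one must verify that It\^o's excursion measure, restricted to excursions of prescribed area and suitably rescaled, delivers exactly $\cE$, and that this rescaling commutes with the time change by the inverse area.
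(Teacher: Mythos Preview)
The paper does not contain a proof of this theorem: it explicitly states that Theorems~\ref{convexc} and~\ref{Bessel} ``are proven in a companion paper \cite{CarPet17b}.'' So there is no in-paper argument to compare against. I can only comment on your proposal on its own merits.

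Your derivation of the Bessel representation is correct and is essentially the standard route: the time change $a'(s)=1/X_s$ combined with It\^o--Tanaka and the substitution $\rho=\tfrac{2}{3}X^{3/2}$ does yield the $\delta=4/3$ Bessel SDE, and the conditioning $B_{a_1}=0$ translates to the bridge condition $\rho_1=0$. From this, the first identity $Z(Y)=_{\text{Law}}\widetilde C_{1/3}$ follows exactly as you say, via the classical fact that the zero set of a Bessel process of dimension $\delta\in(0,2)$ is the range of a stable subordinator of index $1-\delta/2$, here $1/3$, and the bridge conditioning corresponds to conditioning on $1$ being in the regenerative set.

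For the second identity your outline is the right picture, but the last paragraph is where the actual work lies and you have correctly flagged it as incomplete. The issue is not merely technical bookkeeping: one must show that the It\^o excursion measure of Brownian motion, disintegrated with respect to the \emph{area} functional rather than the lifetime, yields after rescaling the law of the normalised-by-area excursion $\cE$, and that this disintegration is compatible with the time change. This is not a standard textbook statement; the usual It\^o description conditions on lifetime, and passing from lifetime-conditioning to area-conditioning requires an explicit change-of-variables argument (or a direct computation of the relevant densities). Without that step, the claim that ``renormalising the $Y$-excursion to length $1$ is equivalent to renormalising the corresponding Brownian excursion to area $1$'' is a heuristic rather than a proof. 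This is presumably exactly what the companion paper supplies.
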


Propositions \ref{proposui} and \ref{propfin} are of key importance, because they reduce significantly the level of complexity of Theorem \ref{midlaux}. It becomes indeed sufficient 
to prove Proposition \ref{propded} below which is a simplified version of Theorem \ref{midlaux} to the extend that the profile and center of mass walk are replaced by their
truncated version.  We let $\widetilde R_{L,\beta}^{\, k}$ be the joint law of 
$(|\widetilde V_{L,k}|, \widetilde M_{L,k})$ under $\probmubeta{\cdot \mid L\in \mathfrak{X}}$ and $B$ and $D$ be independent Brownian motions as defined in the statement of Theorem 
\ref{midlaux}.

\begin{proposition}\label{propded}
 For every $k\in \N$,
\be{convtronca}
\widetilde R_{L,\beta}^{\,k}\xrightarrow[L\to \infty]{d} (\widetilde B^k, \widetilde D^k).
\ee
\end{proposition}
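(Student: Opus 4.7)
The plan is to leverage the renewal decomposition together with the two main inputs, namely Theorem~\ref{convhaus} (convergence of the rescaled regenerative set) and Theorem~\ref{convexc} (convergence of a single excursion conditioned on its area), and to glue them together using the conditional independence of excursions given $\mathfrak{X}$ stated in Proposition~\ref{pro:renstruccritical}. Fix $k\in\N$. By Theorem~\ref{convhaus} and Skorokhod's representation theorem, I would work on a common probability space on which $\mathfrak{X}\cap[0,L]/L$ converges a.s.\ in the Hausdorff metric to a realization of $\widetilde C_{1/3}$, which by Theorem~\ref{Bessel} coincides in law with the zero set of the limiting profile $|\widetilde B|$. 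The $1/3$-stable regenerative set has a.s.\ only finitely many gaps of length $\ge 1/k$; call them $(g_r,d_r)_{r\le N_k}$ with lengths $\Delta_r=d_r-g_r$. By the Hausdorff convergence, for $L$ large there are exactly $N_k$ inter-arrivals of $\mathfrak{X}\cap[0,L]$ of length $\ge L/k$, with endpoints $(g_r^L,d_r^L)\to(g_r,d_r)$ and lengths $X_{r_L}/L\to\Delta_r$.

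Next, I would conditionally analyze each big excursion. Given $\mathfrak{X}\cap[0,L]$, Proposition~\ref{pro:renstruccritical} makes the excursions independent, each of law $\mathbf P_{\beta,\mu_\beta}(\cdot\mid X_1 = x_r-x_{r-1})$. Applying Theorem~\ref{convexc} to the $r$-th big excursion with area $X_{r_L}\sim L\Delta_r$, and rescaling time by $L$ (instead of $X_{r_L}^{2/3}\cdot L^{-2/3}$) and space by $L^{1/3}$ rather than $X_{r_L}^{1/3}$ introduces an overall scaling factor $\Delta_r^{1/3}$. This gives joint convergence on $[g_r^L,d_r^L]$:
\[
\bigl(\widetilde V_L(s),\,\widetilde M_L(s)-\widetilde M_L(g_r^L)\bigr)_{s\in[g_r^L,d_r^L]}\xrightarrow{d}\bigl(\Delta_r^{1/3}\cE^{(r)}_{a^{(r)}_{(s-g_r)/\Delta_r}},\,\Delta_r^{1/3}B^{(r)}_{a^{(r)}_{(s-g_r)/\Delta_r}}\bigr),
\]
with the $(\cE^{(r)},B^{(r)})$ independent across $r$ by the conditional independence of excursions.

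Now I would assemble. By definition of the truncation, $\widetilde V_{L,k}\equiv 0$ outside $\bigcup_r[g_r^L,d_r^L]$, and $\widetilde M_{L,k}$ is piecewise constant there, with $\widetilde M_{L,k}(g_r^L)=\sum_{r'<r}\bigl(\widetilde M_L(d_{r'}^L)-\widetilde M_L(g_{r'}^L)\bigr)$. A continuous mapping/concatenation argument then identifies the joint limit as a process that vanishes outside the macroscopic gaps of $\widetilde C_{1/3}$ and equals the independent pieces above inside them. By Theorem~\ref{Bessel}, the law of $|\widetilde B|$ on $[0,1]$ is exactly this: its zero set is $\widetilde C_{1/3}$ and each of its excursions (parameterized by area) is distributed as $\gamma_\cE$, so the profile limit matches $\widetilde B^k$. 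For the center-of-mass walk, the conditional independence of the $B^{(r)}$'s combined with independence from the $\cE^{(r)}$'s (per Theorem~\ref{convexc}) means the assembled process has independent Gaussian increments across the macroscopic gaps with variance $\sigma_\beta^2\Delta_r$, which together with the within-gap Brownian pieces reconstructs exactly $\widetilde D^k=\int_0^{a_\cdot}\mathbf 1_{\Gamma_k}\,dD_u$ for an independent Brownian motion $D$.

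The main obstacle is the center-of-mass bookkeeping. Unlike the profile, $M$ does not reset between excursions and the alternating-sign structure in \eqref{defM2} could in principle couple successive excursion contributions; moreover $\widetilde M_{L,k}$ accumulates contributions from previous big excursions even while $\widetilde V_{L,k}=0$, so one must verify that $\widetilde M_{L,k}(g_r^L)$ is asymptotically a sum of $r-1$ independent centered Gaussians of the correct variances. Establishing this requires carefully combining Proposition~\ref{pro:renstruccritical} (to get exact conditional independence of excursions given $\mathfrak{X}$), Theorem~\ref{convexc} (to get the Brownian limit on each excursion) and the Skorokhod coupling above, then applying the continuous mapping theorem to the concatenation map — which is continuous on the set of piecewise-defined cadlag paths vanishing on the complement of a fixed finite family of intervals, a set on which the limit is a.s.\ supported.
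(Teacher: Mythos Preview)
Your approach is essentially the one the paper takes: Skorokhod-couple the rescaled renewal set via Theorem~\ref{convhaus}, Skorokhod-couple each large excursion via Theorem~\ref{convexc}, and concatenate. The structure is right, and you correctly flag the central difficulty at the end.

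However, you identify the obstacle --- the sign coupling between excursions in the center-of-mass walk --- without actually resolving it, and the resolution is the crux of the proof. The excursions \emph{in modulus} are independent (Proposition~\ref{pro:renstruccritical}), but the contribution of excursion $t$ to $\widetilde M_{L,k}$ is $\alpha_{t,L}$ times a functional of $|\mathfrak{E}|_t$, where $\alpha_{t,L}=(-1)^{\tau_{e_t-1}}\,\sign(V_{\tau_{e_t}-1})\in\{-1,1\}$ depends on the parity of the starting time and on the sign of the \emph{previous} excursion (see \eqref{defalpha} and \eqref{defml}). So the discrete increments $M^{\text{exc}}(e_t)$ are \emph{not} independent, and invoking ``conditional independence of the $B^{(r)}$'s'' directly from Theorem~\ref{convexc} is not enough. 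What the paper does is to observe that $\alpha_{t,L}$ is measurable with respect to the $\sigma$-algebra generated by $\mathfrak{X}_L$, the Bernoulli signs $(Y_{j,N})$, the auxiliary $\epsilon$'s, and the excursions with index $<t$; conditionally on this $\sigma$-algebra, the limiting pair $(\cE^{t},D^{t})$ on excursion $t$ has the same law as $(\cE^{t},\alpha_{t,L}D^{t})$ because a Brownian motion is symmetric. Iterating this from $t=r$ down to $t=1$ removes all the $\alpha_{t,L}$'s from the characteristic function of any finite-dimensional distribution of $(|\widetilde B_{k,L}|,\widetilde D_{k,L})$, and only then does one obtain the $L$-independent law that Theorem~\ref{Bessel} identifies with $(\widetilde B^k,\widetilde D^k)$.

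A minor correction: the limiting increment of $\widetilde D^k$ across the gap $(g_r,d_r)$ has variance $\sigma_\beta^2(a_{d_r}-a_{g_r})$, i.e.\ $\sigma_\beta^2$ times the \emph{extension} of the corresponding excursion of $B$, not $\sigma_\beta^2\Delta_r$ (the area). This is consistent with $\widetilde D^k(s)=\int_0^{a_s}\ind_{\Gamma_k}\,dD_u$.
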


In section \ref{step3} we prove Proposition \ref{propded} subject to Theorems \ref{convhaus}, \ref{convexc} and \ref{Bessel}.


\section{Preparations}\label{prepa}
In Section \ref{mren} below, we give a complete description of the renewal structure introduced in (\ref{premm}--\ref{defX}) 
and consisting of excursions of the path away from the origin. We recall some facts from \cite{CP15} concerning the 
geometric area and extension of those excursions and we go further by  giving a method to reconstruct a trajectory 
$V$ of law ${\bf P}_{\beta,\mu_\beta}$ with the help of independent excursions. In Section \ref{secpassunif}, we justify the use of Skorokhod Lemma for those
cadlag processes considered in the present paper. In Section \ref{proofRWR}, we prove Theorem \ref{transf21}.

\subsection{More about the renewal process}\label{mren}

We recall (\ref{premm}--\ref{defX}), we let $\mu_\beta$ be a probability law on $\Z$ defined as 
\be{defubeta}
 \mu_\beta(k) = \frac{1-e^{-\beta/2}}{2}
e^{-\frac{\beta}{2} \valabs{k}} \un{k\neq 0} +  (1-e^{-\beta/2})
\un{k=0},
\ee
and we let $\bP_{\beta,x}$ be the law of the random walk starting
  from $V_0=x\in \Z$ and $\bP_{\beta,\mu_\beta}$ be the law of the
  random walk when  $V_0$ has
distribution $\mu_\beta$. 
In \cite[Lemma 4.6]{CP15} the tail distribution of $X_1$ is displayed as well as a renewal theorem for the set $\mathfrak{X}$. To be more specific, 
for $x\in \{0,\mu_\beta\}$ there exists a $c_{x,\beta}>0$ and $d_{x,\beta}>0$  such that 
\be{tailest}
{\bf P}_{\beta,x}(X_1=n)=\frac{c_{x,\beta}}{n^{4/3}} (1+o(1)) \quad \text{and}\quad {\bf P}_{\beta,x}(n\in \mathfrak{X})=\frac{d_{x,\beta}}{n^{2/3}} (1+o(1)).
\ee
Note that Lemma 4.6 is stated in \cite{CP15} under ${\bf P}_{\beta,\mu_\beta}$ but holds true under 
${\bf P}_{\beta}$ as well. 

In the present paper we need to go further in the analysis of the renewal.  
With the help of  $(\tau_k)_{k\geq 0}$,  we divide any random walk trajectory $V$ into a sequence of excursions $(\mathfrak{E}_k)_{k\geq 0}$ and we also denote by $(|\mathfrak{E}|_k)_{k\geq 0}$ the same excursions in modulus, i.e.,   for $k\in \N$
\be{defexc}
\mathfrak{E}_k=(i,V_i)_{i\in \{\tau_{k-1},\dots,\tau_k-1\}} \quad \text{and} \quad |\mathfrak{E}|_k=(i,|V_i|)_{i\in \{\tau_{k-1},\dots,\tau_k-1\}}.
\ee
We will consider this sequence under ${\bf P}_\beta$ and ${\bf P}_{\beta,\mu_\beta}$. It is not true that the excursions themselves are
independent because the sign of any excursion depends on the sign of the preceding excursion. However,
when considered in modulus, those excursions are independent.  
\begin{proposition}
\label{pro:renstruccritical}
Under ${\bf P}_{\beta}$ the random processes $(|\mathfrak{E}|_k)_{k\ge 1}$ are
independent and the sequence $(|\mathfrak{E}|_k)_{k\ge 2}$ is IID. 
The law of $|\mathfrak{E}|_1$ is that  of the first excursion (in modulus) of a 
random walk of law $\bP_{\beta,0}$ and for $k\geq 2$ the law of $|\mathfrak{E}|_k$ is that of the first excursion (in modulus) of a 
random walk of law $\bP_{\beta,\mu_\beta}$.

Under ${\bf P}_{\beta,\mu_\beta}$ the random processes $(|\mathfrak{E}|_k)_{k\ge 1}$ are IID. 
The law of $|\mathfrak{E}|_1$ is that  of the first excursion (in modulus) of a 
random walk of law $\bP_{\beta,\mu_\beta}$.
\end{proposition}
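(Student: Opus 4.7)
The plan is to combine the strong Markov property of $V$ at the renewal times $\tau_k$ with the memoryless property of the symmetric Laplace step law \eqref{lawP}, which controls the undershoot of $V$ at each sign change. Since the walk has symmetric increments with finite variance it is recurrent, so each $\tau_k$ is a.s.\ finite, and the strong Markov property at $\tau_k$ asserts that, conditionally on $\mathcal{F}_{\tau_k}$, the shifted trajectory $(V_{\tau_k + j})_{j\geq 0}$ has law $\bP_{\beta, V_{\tau_k}}$; in particular $\mathfrak{E}_{k+1}$ is a deterministic functional of this shifted trajectory. Moreover the symmetry of the $U_j$'s gives that $V$ under $\bP_{\beta, v}$ has the same law as $-V$ under $\bP_{\beta, -v}$, so the law of the first excursion in modulus under $\bP_{\beta, v}$ depends only on $|v|$. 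Both the independence claim and the identification of the laws of the moduli-excursions will then follow once we show that, for $k\geq 1$, the conditional law of $|V_{\tau_k}|$ given $\mathcal{F}_{\tau_k-1}$ equals the modulus of $\mu_\beta$ and does not depend on the past.

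The core computation concerns the conditional law of $V_{\tau_k}$ given $V_{\tau_k - 1}$. For $V_{\tau_k - 1} = x > 0$ the renewal condition forces $U_{\tau_k} \leq -x$, and a direct use of \eqref{lawP} yields
\begin{equation}
\bP_\beta\bigl(V_{\tau_k} = y \,\big|\, V_{\tau_k - 1} = x,\; V_{\tau_k - 1} V_{\tau_k} \leq 0\bigr) = (1 - e^{-\beta/2})\, e^{\beta y/2}, \qquad y \leq 0,
\end{equation}
with a symmetric identity when $x < 0$. The right-hand side is free of $x$, so $|V_{\tau_k}|$ is independent of $\mathcal{F}_{\tau_k - 1}$, with law $(1 - e^{-\beta/2}) e^{-\beta n/2}$ on $n \geq 0$. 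Comparing with \eqref{defubeta}, this is exactly the law of $|X|$ for $X \sim \mu_\beta$, and since only $|V_{\tau_k}|$ matters for the modulus excursion, mixing $\bP_{\beta, v}$ over $v \sim \mu_\beta$ produces $\bP_{\beta, \mu_\beta}$.

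Assembling the pieces: conditionally on $(|\mathfrak{E}|_1, \ldots, |\mathfrak{E}|_k)$, the excursion $|\mathfrak{E}|_{k+1}$ is a function of $|V_{\tau_k}|$ and of post-$\tau_k$ increments independent of $\mathcal{F}_{\tau_k}$, and $|V_{\tau_k}|$ has fixed law independent of the past; this yields independence of the family $(|\mathfrak{E}|_k)_{k\geq 1}$ by induction, and identifies the common law of $|\mathfrak{E}|_k$ for $k\geq 2$ as that of $|\mathfrak{E}|_1$ under $\bP_{\beta,\mu_\beta}$. The two cases of the proposition then follow from the initial condition: $V_0 = 0$ under $\bP_\beta$ gives $|\mathfrak{E}|_1$ the law of the first modulus excursion under $\bP_{\beta, 0}$, while $V_0 \sim \mu_\beta$ under $\bP_{\beta,\mu_\beta}$ puts $|\mathfrak{E}|_1$ on the same footing as all the subsequent ones. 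The main subtlety is conceptual rather than technical: it is precisely the memoryless property of the Laplace law that makes the undershoot $|V_{\tau_k}|$ \emph{independent} of the past (not merely Markov), which is what upgrades the renewal structure from a Markov chain of excursions to a genuinely i.i.d.\ sequence.
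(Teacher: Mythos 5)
Your proposal is correct and follows essentially the same route as the paper: the strong Markov property at the stopping times $\tau_k$, the symmetry of the increments (so the modulus-excursion law started from $v$ depends only on $|v|$), and the memoryless overshoot computation showing that $|V_{\tau_k}|$ is independent of the past with law $(1-e^{-\beta/2})e^{-\beta n/2}$, i.e.\ the law of $|T|$ with $T\sim\mu_\beta$. The paper carries out the same overshoot identity by writing the joint probability of a fixed excursion history together with $\{|V_{\tau_k}|=x\}$ and factorizing, which is just the rigorous, trajectory-wise version of your conditional computation.
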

\begin{proof}
We note that $V$ is a Markov chain, that $\tau_k$ is a stopping time (for $k\in \N$) and that for every $x\in \Z$ the law of 
$|\mathfrak{E}|_1$ under ${\bf P}_{\beta,x}$ equals the law of  $|\mathfrak{E}|_1$ under ${\bf P}_{\beta,|x|}$. Therefore, the proof of Proposition
\ref{pro:renstruccritical} will be complete once we show (by induction) that for every $k\in \N$, the random variable $|V_{\tau_k}|$ is independent of 
the $\sigma$-algebra $\sigma(\mathfrak{E}_1,\dots,\mathfrak{E}_k, \tau_k)$ and has the same law as $|T|$ with $T$ a random variable of law $\mu_\beta$. 

We pick $t_1<t_2<\dots<t_k\in \N$ and $(v_0,\dots,v_{t_k-1})\in \Z^{t_k}$ that are compatible with the event 
$$M_k:=\{\mathfrak{E}_1=(v_0,\dots,v_{t_1-1}), \dots,\mathfrak{E}_k=(v_{t_{k-1}},\dots,v_{t_k-1})\}.$$
We set $x\in \N_{0}$ and we compute $C:={\bf P}_{\beta,\mu_\beta}(M_k\cap \{|V_{\tau_k}|=x\})$ as 
\begin{align}\label{indepb}
\nonumber C&={\bf P}_{\beta,\mu_\beta}( V_0=v_0,\dots,V_{t_k-1}=v_{t_k-1}, V_{t_k}=-\text{sign} (v_{t_k-1})\,  x)\\
\nonumber &={\bf P}_{\beta,\mu_\beta}( V_0=v_0,\dots,V_{t_k-1}=v_{t_k-1}) \, {\bf P}_{\beta,\mu_\beta}(U_1=x+|v_{t_k-1}|)\\
&={\bf P}_{\beta,\mu_\beta}(M_k) \, \frac{{\bf P}_{\beta,\mu_\beta}(U_1=x+|v_{t_k-1}|)}{{\bf P}_{\beta,\mu_\beta}(U_1\geq |v_{t_k-1}|)}.
\end{align}
The ratio on the r.h.s. in \eqref{indepb} is equal to $(1-e^{-\frac{\beta}{2}}) \, e^{-\frac{\beta}{2} x}$ which is exactly $\mathbb{P}(|T|=x)$ when $T$ has law $\mu_\beta$. 
This completes the proof.
\end{proof}

\noindent {\it Random walk reconstruction.} 
Proposition \ref{pro:renstruccritical} will  allow us to reconstruct a random walk of law ${\bf P}_{\beta,\mu_\beta}$ with a sequence of independent excursions (in modulus).
With Definitions \ref{deftiv}  below we give the details of this construction. 
\begin{definition}\label{deftiv}
 Let $(\gep_i)_{i\in \N}$ be an i.i.d. sequence of symmetric Bernoulli trials taking values $1$ and $-1$.  
Let also 
 $(W_0,W_1,\dots, W_{\tau_1})$ be the first excursion of a trajectory $W$ with law 
${\bf P}_{\beta,\mu_\beta}$. Independently from $(\gep_i)_{i\in \N}$, let $\big\{(V_i^{j})_{i\in \{0,\dots, \tau^j_1\}}, j\geq 1\big\}$ be a sequence of independent  copies of  
$(W_0,W_1,\dots, W_{\tau_1-1})$ and set  $\tau_0=0$ and   $\tau_j=\tau_{j-1}+\tau_1^j$ for $j\geq 1$.  Finally, define $V$ as follows:
\begin{itemize}
\item $V_i=V_i^1,\quad i\in \{0,\dots,\tau_1-1\}$,
\item for $j\geq 2$ \, if\,  $V_0^j\neq 0$\,  then\,  $ V_i=-\text{sign}(V_{\tau_{j-1}-1}) \, \big| V^j_{i-\tau_{j-1}}\big|$ 
\, for every\,  $i\in \{\tau_{j-1},\dots,\tau_{j}-1\}$,
\item for $j\geq 2$\,  if\,  $V_{0}^j= 0$\,  then\,  $V_i=\gep_j \,  \big| V^j_{i-\tau_{j-1}}\big|$\,  for every\,  $i\in \{\tau_{j-1},\dots,\tau_{j}-1\}$.
\end{itemize}
The resulting stochastic process $V$ is a random walk of law ${\bf P}_{\beta,\mu_\beta}$.
\end{definition}

\begin{remark}\label{remindep}
The construction in Definition \ref{deftiv} will be used in Section \ref{step4} and we note that, by construction, the sequence of signs $(\gep_i)_{i\in \N}$ is independent 
of the modulus of the trajectory $(|V_i|)_{i\in \N_{0}}$ and also independent of $(\tau_i)_{i\in \N_{0}}$.
\end{remark}

With Definitions \ref{deftiv2} below we display an alternative construction in order to generate for a random walk of law 
${\bf P}_{\beta,\mu_\beta}(\cdot\, | \, L\in \xi)$. This construction will be used in Section \ref{step3}.  

\begin{definition}\label{deftiv2}
Let  $(\gep_i)_{i\in \N}$ be an  i.i.d. sequence of symmetric Bernoulli trials taking values $1$ and $-1$. 
Recall \eqref{defX} (and the definition of $v_L$ below) and  independently from $(\gep_i)_{i\in \N}$ sample a random set 
$$\mathfrak{X}=\{0,X_1,\dots,X_1+X_2+\dots+X_{v_L-1},L\}$$ under ${\bf P}_{\beta}(\cdot\, | \, L\in \xi)$. Independently from 
$(\gep_i)_{i\in \N}$ and $\mathfrak{X}$, 
sample 
 for every $j\in \{1,\dots,v_L\}$ the excursion  $(V_i^{j})_{i\in \{0,\dots, \tau^j\}}$
with law ${\bf P}_{\beta,\mu_\beta} (\cdot \,|\,X= X_j)$. Set  $\tau_0=0$ and   $\tau_j=\tau_{j-1}+\tau^j$ for $j\geq 1$. 
Finally, define $V$ as follows:
\begin{itemize}
\item $V_i=V_i^1,\quad i\in \{0,\dots,\tau_1-1\}$,
\item for $j\geq 2$ \, if\,  $V_0^j\neq 0$\,  then\,  $ V_i=-\text{sign}(V_{\tau_{j-1}-1}) \, \big| V^j_{i-\tau_{j-1}}\big|$ 
\, for every\,  $i\in \{\tau_{j-1},\dots,\tau_{j}-1\}$,
\item for $j\geq 2$\,  if\,  $V_{0}^j= 0$\,  then\,  $V_i=\gep_j \,  \big| V^j_{i-\tau_{j-1}}\big|$\,  for every\,  $i\in \{\tau_{j-1},\dots,\tau_{j}-1\}$.
\end{itemize}
The resulting stochastic process $V$ is a random walk of law ${\bf P}_{\beta,\mu_\beta}(\cdot\, |\, L\in \mathfrak{X})$.
\end{definition}

\subsection{Skorohod's representation Theorem  for cadlag random functions}\label{secpassunif}

Along the paper, we will often need to consider some piecewise constant cadlag processes defined either on $[0,\infty)$ or on $[0,1]$. We will also need to consider  the interpolated versions of such processes. To that aim we define two sets of functions, i.e.,  for $I\in \{[0,1],[0,\infty)\}$, we let $(\mathcal{C}_I,d)$ be the set of  continuous functions on $I$, endowed with 
$$d(f,g)=\sum_{k=1}^\infty \frac{1}{2^k} \frac{||f-g||_{I\cap [0,k],\infty}}{1+||f-g||_{I\cap [0,k],\infty}},$$
and similarly we let  $(D_I, d)$ be the set of cadlag functions defined on $I$ also endowed with the same distance.
We recall that $(\mathcal{C}_I, d)$ is a Polish space whereas $(D_I, d)$ is not. Therefore, one can a priori not apply directly the Skorohod's representation Theorem in $(D_I, d)$
(see \cite[Theorem 6.7]{Bill08}). Let us explain briefly below how this difficulty can be handled.


 For $n\in \N$, we will only consider functions $F$ that are piecewise constant and cadlag,  defined on $I\in \{[0,\infty),[0,1]\}$, and such that all jumps of $F$ 
 occur at times belonging to $I\cap \frac{\N}{n}$. For such functions, we denote by $F^{\text{int}}$ their interpolated version, that is,  
\begin{align}
\rm{F}^{\, \text{int}}: \ I &\mapsto \R\\
 \nonumber                        s &\mapsto (1-\{sn\})\, \rm{ F}\big(\tfrac{\lfloor sn\rfloor}{n}\big)+\{sn\} \, \rm{F}\big( \tfrac{\lfloor sn\rfloor +1}{n}\big).
\end{align} 
 
We recall (\ref{lawP}--\ref{taul}). All the cadlag processes considered in the rest of the paper are built with the increments $(U_i)_{i=1}^{\xi_L}$ (resp. $(U_i)_{i=1}^{\tau_1}$) of a 
random walk $V$ of law  ${\bf P}_{\beta,x}(\cdot |\,  L\in \mathfrak{X})$  (respectively ${\bf P}_{\beta,x}(\cdot |\,  X_1=L)$) with $x\in \{0,\mu_\beta\}$.  Since 
$\xi_L\leq L$ and $\tau_1\leq L$, it is useful to define for $\alpha>0$
\begin{equation}\label{fstu}
\mathfrak{A}_{L,\alpha}:=\{ \exists i\in \{1,\dots L\} \colon |U_i| \geq \alpha \log L\}.
\ee
With \eqref{lawP} and \eqref{tailest}
 we easily prove that there exists an $\alpha>0$ such that  for $x\in \{0,\mu_\beta\}$
\be{bincre}\lim_{L\to \infty} \mathbf{P}_{\beta,x}\big[ \mathfrak{A}_{L,\alpha}\  |\ L\in \mathfrak{X}\big]=0\quad \text{and}\quad \lim_{L\to \infty} \mathbf{P}_{\beta,x}\big[ \mathfrak{A}_{L,\alpha}\  |\ X_1=L\big]=0.
\ee
As a consequence, if we denote by $F_L$ (respectively $F_L^{\text{int}}$) a generic cadlag random process (and its interpolated version) build with the increments of $V$ 
and rescaled vertically  by $L^\alpha$ for some $\alpha>0$, we can deduce  from \eqref{bincre} that  for  $x\in \{0,\mu_\beta\}$ 
and for $\gep>0$
\be{equaint}
\lim_{L\to \infty}\mathbf{P}_{\beta,x}\big[ d(F_L,F_L^{\text{int}})>\gep\, |\, L\in \mathfrak{X}\big]=0 \quad 
\text{and} \quad \lim_{L\to \infty}\mathbf{P}_{\beta,x}\big[ d(F_L,F_L^{\text{int}})>\gep\, |\, X_1=L\big]=0.
\ee
Thus, for $F_\infty\in \cC_I$, the convergence in law of $(F_L)_{L\in \N}$ towards $F_\infty$ in $(D_I,d)$ is equivalent to the convergence in law of
$F_L^{\text{int}}$ towards $F_\infty$ in $(\cC_I,d)$. Moreover, the fact that $F_L$ only jumps at times belonging to  $I\cap \frac{\N}{L}$ allows us 
to reconstruct $F_L$ from $F_L^{\text{int}}$ in an easy way.  Therefore, Skohorod's representation Theorem can be applied in the present paper 
for convergence in $(D_I,d)$ as well.

\subsection{Proof of Theorem \ref{transf21}}\label{proofRWR}
We recall the stretch description of IPDSAW in (\ref{defLL}--\ref{hammm}) and we observe that the partition function can be rewritten under the form
 \be{partfunstr}
 Z_{L,\beta}=\sum_{N=1}^{L}\sum_{l\in\mathcal{L}_{N,L}} \hspace{-1mm}   e^{\beta\sum_{i=1}^{N-1}(l_i\;\tilde{\wedge}\;l_{i+1})}.
 \ee
We note that 
$\forall x,y\in\mathbb{Z}$ one can write  $x\;\tilde{\wedge}\;y=\frac12\left(|x|+|y|-|x+y|\right)$ and therefore  the partition function in \eqref{partfunstr}  becomes
\begin{align}\label{ls}
\nonumber Z_{L,\beta}
&=\sum_{N=1}^{L}\sum_{\substack{l\in\mathcal{L}_{N,L}\\l_0=l_{N+1}=0}}\exp{\Bigl(\beta\sum_{n=1}^N{|l_n|}-\tfrac{\beta}{2}\sum_{n=0}^N{|l_n+l_{n+1}|}\Bigr)}\\
&=c_\beta\, e^{\beta L} \sum_{N=1}^{L}\left(\tfrac{c_\beta}{e^\beta}\right)^N\sum_{\substack{l\in\mathcal{L}_{N,L}
\\ l_0=l_{N+1}=0}}\prod_{n=0}^{N}\frac{\exp{\Bigl(-\tfrac{\beta}{2}|l_n+l_{n+1}|\Bigr)}}{c_\beta}.
\end{align}

At this stage we recall the definition of the auxiliary random walk $V$ in  (\ref{lawP}--\ref{defG}) as well as the family of one to one correspondence $(T_N)_{N=1}^L$
between path configurations and random walk trajectories (see \ref{defTN}). Since for $l\in \cL_{N,L}$  the increments $(U_i)_{i=1}^{N+1}$ of $V=(T_N)^{-1}(l)$  in \eqref{defTN} necessarily satisfy $U_i:=(-1)^{i-1}(l_{i-1}+l_i)$, one can 
rewrite  \eqref{ls} as
\begin{align}\label{tgh}
Z_{L,\beta}&
=c_\beta e^{\beta L} 
\sum_{N=1}^{L} \Gamma_\beta^{\, N}\   \mathbf{P}_{\beta}(\cV_{N,L-N}) \quad \text{with}\quad \Gamma_\beta:=\frac{c_\beta}{e^\beta}.
\end{align}
The probabilistic representation of the partition function in \eqref{tgh} is a key tool when studying IPDSAW. It allows for instance to spot quickly 
the critical point of the model which turns out to be the solution in $\beta$ of $\Gamma_\beta=1$. 
The present paper being fully dedicated to the critical regime of IPDSAW, we will henceforth 
always work at $\beta=\beta_c$ and therefore we remove the term $\Gamma_\beta$ from the r.h.s. in \eqref{tgh}. 

Another useful consequence of formula \eqref{tgh} is that it provides us with a very strong link between the polymer law  $P_{L,\beta}$ and the random walk law ${\bf P}_\beta$
conditioned on a suitable event. We recall (\ref{taul}--\ref{taul}), the fact that $\Gamma_{\beta_c}=1$ and also that the term indexed by $N$ in the sum in \eqref{ls} corresponds to the contribution to the partition function of those 
path in $\cL_{N,L-N}$. Consequently, we can derive from (\ref{ls}--\ref{tgh}) that for every $N\in \{1,\dots,L\}$, 
\begin{align}\label{egalisucc}
P_{L,\beta}(N_l=N)&={\bf P}_\beta(\xi_L=N\, |\, V_{\xi_L+1}=0, K_{\xi_L}=L), \\
\nonumber P_{L,\beta}(l\in \cdot \, |\, N_l=N)&={\bf P}_\beta(T_N(V)\in \cdot\, |\, \xi_L=N,  V_{N+1}=0, K_{N}=L).
\end{align}
Theorem \ref{transf21} is a straightforward consequence of \eqref{egalisucc}.

%
%

\section{Proof of Theorem \ref{Theo-shape} subject to Theorem \ref{midlaux}}\label{pr:Theo-shape}

\subsection{Tightness of $(y_L)_{L\in \N}$ (recall Remark \ref{tyl})} \label{prtyl}
Let $k_0\in \N$, recall that ${\bf P}_{\beta}(U_1=0)=1/c_\beta$ and observe that 
\begin{align}\label{tightyl}
\mathbf{P}_{\beta_c}\bigl( y_L> k_0,\   V_{\xi_L+1}=0, \ K_{\xi_L}=L\bigr)
\nonumber &=\sum_{k=k_0+1}^L \big(\tfrac{1}{c_\beta}\big)^k \ \mathbf{P}_{\beta_c}\bigl(V_{\xi_{L-k+1}-1}\neq 0,\ V_{\xi_{L-k+1}}=0, \ K_{\xi_{L-k+1}}=L-k+1\bigr)\\
\nonumber &=\sum_{k=k_0+1}^L \big(\tfrac{1}{c_\beta}\big)^k \ \mathbf{P}_{\beta_c}\bigl(L-k+1\in \mathfrak{X}, \ V_{\xi_{L-k+1}}=0\bigr)\\
&=(1-\tfrac{1}{c_\beta}) \sum_{k=k_0+1}^L \big(\tfrac{1}{c_\beta}\big)^k \ \mathbf{P}_{\beta_c}\bigl(L-k+1\in \mathfrak{X}, \bigr).
\end{align}
Note that \eqref{tightyl} can also be written without the event $\{y_L> k_0\}$ in the left hand side in  \eqref{tightyl} and with a sum running from 
$k=0$ to $k=L$ in the r.h.s. Therefore, we recall \eqref{tailest} and we obtain that there exists a $d>0$ such that $\mathbf{P}_{\beta_c}\bigl(   V_{\xi_L+1}=0, \ K_{\xi_L}=L\bigr)=\frac{d}{L^{2/3}} (1+o(1))$.  As a consequence 
(since also $c_\beta>1$) we deduce from  \eqref{tightyl} 
that for every $\gep>0$ we can choose $k_0$ large enough such that $\mathbf{P}_{\beta_c}( y_L> k_0\, \mid\,   V_{\xi_L+1}=0, \ K_{\xi_L}=L)\leq \gep$ for every $L\in \N$.

\subsection{From the profile and the center-of mass walk to the occupied set, i.e., proof of Theorem \ref{Theo-shape} subject to Theorem \ref{midl}}\label{step1}


\begin{proof}
By the Skorohod's representation Theorem, we can already assert that there exists 
$(\mathrm h_L,\mathrm{m}_L)_{L\in \N}$ a sequence of cadlag processes 
 and $B$ and $D$ two independent Brownian motions of variance $\sigma_\beta^2$, all defined on the same probability space $(\Omega,\cA,P)$ so that 
 \begin{itemize}
\item  $P$-a.s. it holds that for all $K>0$,
\be{skoropp}
\lim_{L\to \infty} \sup_{s\in [0,K]} \big|(\mathrm h_L(s),\mathrm{m}_L(s))-\big(|B_{s\wedge a_1}|,D_{s\wedge a_1}\big)\big|=0,
\ee 

\item  for all $L\in \N$, $(\mathrm h_L, \mathrm{m}_L)$ has for law $R_{L,\beta_c}$.
\end{itemize}
Then, we recall the definition of $\cS_{\text{crit}}(B,D)$ in \eqref{defcS} and we note that 
the random set 
\be{rs}
S_{L}(\mathrm{h}_L,\mathrm{m}_L):=\bigg\{(s,y)\in [0,i_L]\times \R\colon\,  \mathrm{m}_L(s)-\frac{|\mathrm{h}_L(s)|}{2}\leq y \leq  \mathrm{m}_L(s)+\frac{|\mathrm{h}_L(s)|}{2}\bigg\},
\ee
has for law $Q_{L,\beta}^{\frac{2}{3},\frac{1}{3}}$ with 
$i_L=\sup\{s\geq 0\colon \frac{s}{L^{1/3}}+\int_{0}^s |h_L(s)| ds \leq
1\}$.

Then, it remains to note that $S_{L}(\mathrm{m}_L,\mathrm{h}_L)$ converges $P$-a.s. towards 
$\cS_{\text{crit}}(B,D)$ for the Hausdorff distance. The almost sure convergence of 
$(\mathrm{m}_L,\mathrm{h}_L)$ towards $(B_{\cdot\wedge a_1}, D_{\cdot\wedge a_1})$
implies that $i_L$ also converges towards $a_1$ almost  surely and this is sufficient to conclude.
\end{proof}

\begin{remark}
{\rm To be completely rigorous, we must note that, as it is defined in \eqref{rs}, the law of 
$S_{L}(\mathrm{h}_L,\mathrm{m}_L)$ is not exactly $Q_{L,\beta}^{\frac23,\frac13}$. However, we recall 
\eqref{defS} and \eqref{defscal} and we observe that by enlarging $S_{L}(\mathrm{h}_L,\mathrm{m}_L)$ of 
$1/L^{1/3}$ verticaly and by shifting it of $1/2L^{2/3}$ horizontally we retrieve a set 
of Law $Q_{L,\beta}^{\frac23,\frac13}$. Finally, since the Hausdorff distance between those two sets is bounded above by $1/L^{1/3}$,
working with $S_L(h_L,m_L)$ is sufficient to conclude.}
\end{remark}

\subsection{Time change, i.e., proof of Theorem \ref{midl} subject to Theorem \ref{midlaux}}\label{step2}

\bigskip

As explained in Remark \ref{tyl},   $\widehat V_L$ is 
considered under the conditioning 
$\{L\in \mathfrak{X}\}$
whereas $\widehat B$ is considered under the conditioning $\{B_{a_1}=0\}$. 
We  observe that 
 for $s\geq0$,
\be{rattach}
\widehat{V}_L(s)=\widetilde{V}_L\Big(\tfrac{K_{\lfloor sL^{2/3}\rfloor\wedge \xi_L}}{L}\big)
\quad \text{and}\quad  \widehat B_s= \widetilde B(A_{s\wedge a_1}),
\ee
and similarly 
\be{rattach2}
\widehat{M}_L(s)=\widetilde{M}_L\Big(\tfrac{K_{\lfloor sL^{2/3}\rfloor\wedge \xi_L}}{L}\big)
\quad \text{and}\quad  \widehat D_s= \widetilde D(A_{s\wedge a_1}).
\ee
Therefore, by  \cite[Lemma p. 151]{Bill08},  the proof of Theorem \ref{midl} will be complete once we show that the following convergence in law holds true
\be{cvl}
\lim_{L\to \infty} \bigg(|\widetilde V_L|,\,  \widetilde M_L,\,   \frac{K_{\lfloor \cdot L^{2/3}\rfloor\wedge \xi_L}}{L}\bigg)=_{\text{Law}}\Big(|\widetilde B|,\,  \widetilde D,\,   A_{\cdot\wedge a_1}\Big).
\ee
The following relations between $\xi$ and $\widetilde V$ on the one hand and between 
$a$ and $\widetilde B$ on the other hand will be of key importance to get \eqref{cvl}
\be{equa}
a_s=\int_0^s \frac{1}{|B_{a(u)}|} du=\int_0^s \frac{1}{|\widetilde B(u)|} du \quad \text{and} \quad 
\xi_{sL}=\int_0^s \frac{L}{1+|V_{\xi_{ uL}}|} du=\int_0^s \frac{L^{2/3}}{L^{-1/3}+|\widetilde V_L(u)|} du
\ee
where the first equality holds true for $s\in [0,1]$ and the second for 
$s$ in the set $J_L$ of hopping times of $s\to \xi_{sL}$.

\subsubsection{Outline of the proof of  \eqref{cvl}} 
We will follow the scheme below
\begin{enumerate}
\item With the help of Theorem \ref{midlaux}, we infer the Skorohod's representation Theorem 
and state that there exists a sequence of cadlag processes $|\widetilde V'_L|$, $\widetilde M'_L$  and $|\widetilde B'|$, $\widetilde D'$
defined on the same probability space $(\Omega,\cA,\mathbb P)$ such that for $\mathbb P$-a.e. $\omega\in \Omega$
\be{cvps}
\lim_{L\to \infty}  \big|\big| \, |\widetilde V'_L| - |\widetilde B'| \, \big|\big|_{\infty,[0,1]}=0 \quad \text{and} \quad \lim_{L\to \infty}  || \widetilde M'_L - \widetilde D'||_{\infty,[0,1]}=0
\ee
and such that $(|\widetilde V'_L|,\widetilde M'_L)$ has the same law as  $(|\widetilde V_L|,\widetilde M_L)$ under the conditioning $\{L\in \mathfrak{X}\}$ and $(|\widetilde B'|, \widetilde D')$ are two independent Brownian motions of variance $\sigma_\beta$ under the conditioning $\{B_{a_1}=0\}$.
\smallskip

\item We  define for $s\in [0,1]$, the quantity $a'(s)$ with the l.h.s. of formula \ref{equa} applied to $\widetilde B'$ and for 
$L\in \N$ the quantity  $\xi'_{sL}$ with the r.h.s. of  \eqref{equa} applied to $\widetilde V'_L$. Then we show 
\bl{cvpstau} For all $\gep>0$,
\be{supam}
\lim_{L\to \infty}\ \prob{  \sup_{s\in [0,1]} \bigg| \frac{\xi'_{sL}}{L^{2/3}}-a'(s)\bigg|\geq \gep}=0.
\ee
\el
\item  Subsequently, we define the quantity  $A'$ as the inverse of $a'$ and $K'$ as 
$$K'(j)=\max\{i\geq 1\colon \xi'_i=j\}, \quad j\leq \xi'_L.$$
and we show the following convergence in probability.
\bl{cvpsAAA} 
For all $\gep>0$,
\be{limpro}
\lim_{L\to \infty}\ \prob{\sup_{c\in [0,\infty[} \bigg| \frac{K'_{\lfloor cL^{2/3}\rfloor \wedge \xi'_L}}{L}-A'_{c\wedge
a'(1)}\bigg|\geq \gep}=0.
\ee
\el

\item At this stage,   \eqref{cvps} and Lemma \ref{cvpsAAA} allow us to state that 
\be{cvll}
\lim_{L\to \infty} \bigg(|\widetilde V'_L|,\,  \widetilde M'_L,\,   \frac{K'_{\lfloor \cdot L^{2/3}\rfloor\wedge \xi_L}}{L}\bigg)=_{\text{Law}}\Big(|\widetilde B'|,\, \widetilde D',\,   A'_{\cdot\wedge a'(1)}\Big)
\ee
and this implies \eqref{cvl} by a straightforward application of \cite[Lemma p. 151]{Bill08}.
\end{enumerate}

\begin{remark}\label{remtau}
{\rm We note that, since we defined $\xi'$ with the help of formula \eqref{equa}, 
it is a continuous process and therefore it does not have the same law as 
$\xi$ as defined in \eqref{taul}. However this difference is armless because 
the set of times $J'_L$ at which $\xi'$ takes integer values has 
the same law as the set $J_L$ containing the hopping times  of $\xi$ and moreover between two consecutive points 
of $J_L$ (respectively $J'_L$) $\xi$ (resp. $\xi'$) jumps by one unit exactly.}
\end{remark}

\noindent At this stage it remains to prove lemmas \ref{cvpstau} and \ref{cvpsAAA}
and we begin with the proof of \eqref{supam}.

\bpr

We recall \eqref{equa}. The proof of  \eqref{supam} will be complete once  once we show that for every $\gep>0$,
\be{convprob}
\lim_{L\to \infty} \mathbb{P}\Big(\textstyle \int_0^1  \big| \frac{1}{L^{-1/3}+|\widetilde V'_L(u)|}- \frac{1}{|\widetilde B'_{u}|}\big|  du\geq \gep\Big)=0.
\ee 
We define for  $L\in \N$ and $\eta>0$ the three quantities
\begin{align}
C_1&=\int_0^1  \Big|  \frac{1}{L^{-1/3}+|\widetilde V'_L(u)|}- \frac{1}{|\widetilde B'_{u}|}\Big| \,  \ind_{\big\{|\widetilde B'_{u}|>\eta\big\}}\, 
 \ind_{\big\{L^{-1/3}+|\widetilde V'_L(u)|>\eta\big\}} du,\\
C_2&=\int_0^1  \frac{1}{|\widetilde B'_{u}|} \,  \ind_{\big\{|\widetilde B'_{u}|\leq \eta\big\}}\, du,\\
C_3&=\int_0^1  \frac{1}{L^{-1/3}+|\widetilde V'_L(u)|}  
 \ind_{\big\{L^{-1/3}+|\widetilde V'_L(u)|\leq \eta\big\}} du.
\end{align}
We immediately observe that \eqref{cvps} and the dominated convergence theorem yield that for $\eta>0$ and for $\mathbb{P}$-a.e $\omega$
$\lim_{L\to \infty} C_1=0$. Moreover, \eqref{equa} combined with the fact that $\widetilde B$ and $\widetilde B'$ have the same law and with the fact that $a_1<\infty$
yields that  for $\mathbb{P}$-a.e $\omega$ the function $u\mapsto 1/|\widetilde
B'_{u}|$ is integrable on $[0,1]$. This is sufficient to conclude that  for  $\mathbb{P}$-a.e $\omega$, 
$\lim_{\eta \to 0} C_2=0$.

Thus it remains to consider $C_3$. Since  $\widetilde V'_L$ and  $\widetilde V_L$ are equaly  distributed,
it comes that  
\begin{align}
C_3&=_{\text{law}}\frac{1}{L}\int_0^L    \frac{L^{1/3}}{1+|V_{\xi{\lfloor x\rfloor}}|} \, 
 \ind_{\big\{(1+|V_{\xi{ x}}|)\leq \eta L^{1/3}\big\}} \, dx
 \leq\frac{1}{L^{2/3}} \big| \{ j\leq \xi_L\colon\, |V_j|\leq \eta L^{1/3}\}\big|. 
\end{align}
Therefore, the proof of \eqref{convprob} will be complete once we show that for all $\gep>0$
\be{tppr}
\lim_{\eta \to 0} \limsup_{L\to \infty}  \mathbf{P}_{\beta}\Big(  \big| \{ j\leq \xi_L\colon\, |V_j|\leq \eta L^{1/3}\}\big| \geq \gep L^{2/3}\ \big| \, L\in \mathfrak{X}\Big)=0.
\ee

To prove \eqref{tppr}, we use some results obtained in \cite{CP15} under the same conditioning. 
We recall (\ref{premm}--\ref{defX}) and we denote by $(X^i)_{i=1}^{v_L}$ the order statistics of $(X_i)_{i=1}^{v_L}$ and by $(\mathfrak{N}^i)_{i=1}^{v_L}$ the sequence of 
horizontal excursions  reordered according to  the sequence  $(X^i)_{i=1}^{v_L}$.
 Then,  we distinguish between the $k$ largest such excursions and their 
lengths, i.e.,  $(X^i)_{i=1}^{k}$ and   the others, i.e.,   
\be{ortu}
 \frac{1}{L^{2/3}}| \{ j\leq \xi_L\colon\, |V_j|\leq \eta L^{1/3}\}|\leq  A^\eta_{k,L}+ B_{k,L}
\ee
with
\begin{align}\label{supra}
A^\eta_{k,L}&=\frac{1}{L^{2/3}}\, \sum_{j=1}^k 
\big|\big\{i\in \big\{\tau_{r_j-1},\dots,\tau_{r_j}-1\big\}\colon\;  |V_i|\leq \eta L^{1/3}\big\}\big|
\quad \text{and}\quad B_{k,L}=\sum_{i= k+1}^{v_L} \frac{\mathfrak{N}^i}{L^{2/3}}.
\end{align}
In the second step of the proof of \cite[Proposition 4.7]{CP15}, it is shown that  for all $\gep>0$ 
\be{lims}
\lim_{k\to \infty} \limsup_{L\to \infty} \bP_\beta (B_{k,L} \geq \gep | L\in \mathfrak{X})=0.
\ee
Thus, the proof of  \eqref{tppr} will be complete once we show that  for all $k\in \N$
\be{limss}
\lim_{\eta\to 0} \limsup_{L\to \infty} \bP_\beta \bigg( \sum_{j=1}^k 
\big|\big\{i\in \big\{\tau_{r_j-1},\dots,\tau_{r_j}-1\big\}\colon\;  |V_i|\leq \eta L^{1/3}\big\}\big| \geq \gep L^{2/3}\   | \  L\in \mathfrak{X}\bigg)=0.
\ee
which again will be a consequence of the fact that for all $j\in \N$
\be{limss2}
\lim_{\eta\to 0} \limsup_{L\to \infty} \bP_\beta \Big(
\big|\big\{i\in \big\{\tau_{r_j-1},\dots,\tau_{r_j}-1\big\}\colon\;  |V_i|\leq \eta L^{1/3}\big\}\big| \geq \gep L^{2/3}\   | \  L\in \mathfrak{X}\Big)=0.
\ee

The proof of \eqref{limss2} goes as follows. For $i\in \{1,\dots,v_L\}$ we denote by $\mathfrak{E}_j:=(V_i)_{i\in \{\tau_{j-1},\dots,\tau_j-1\}}$ the $j$-th excursion of 
$V$ and we recall that  conditionally  on $(X_i)_{i\in \{1,\dots,v_L\}}=(x_i)_{i\in \{1,\dots,v_L\}}$,  the excursions 
$(\mathfrak{E}_j)_{j\in \{1,\dots, v_L\}}$ are independent and of  law $\mathbf{P}_{\beta}(\cdot \,|\, X_1=x_1)$ 
for $\mathfrak{E}_1$ and $\mathbf{P}_{\beta,\mu_\beta}(\cdot \,|\, X_1=x_j)$  for   $\mathfrak{E}_j$ with $j\geq 2$. Thus, for $j\geq 1$, 
\begin{align}\label{jko}
R_{L,\eta,\gep}(j)&:= \bP_\beta \Big(
\big|\big\{i\in \big\{\tau_{r_j-1},\dots,\tau_{r_j}-1\big\}\colon\;  |V_i|\leq \eta L^{1/3}\big\}\big| \geq \gep L^{2/3}\   | \  L\in \mathfrak{X}\Big)\\
\nonumber  &=\sum_{\ell=1}^{L-j} \bP_\beta(X_{r_j}=\ell \,|\,  L\in \mathfrak{X}) \max_{x\in \{0,\mu_\beta\}} 
\Big\{\bP_{\beta,x}\Big(\big|\big\{i\in \big\{0,\dots,\tau_1-1\big\}\colon\;  |V_i|\leq \eta L^{1/3}\big\}\big|\geq \gep L^{2/3}\,\big |\, X_1=\ell \Big)\Big\}.
\end{align}
We can argue here that $\frac{X_{r_j}}{L}$ under ${\bf P}_\beta(\cdot \mid L\in \mathfrak{X})$ converges in distribution towards the 
$j$-th largest inter-arrival  of an $1/3$-stable regenerative set on $[0,1]$ conditioned on $1$ being in the set. For this reason, and for 
every $\xi>0$ there exists an $m_1>0$ such that for $L$ large enough 
\be{equaborne}
{\bf P}_\beta\big(X_{r_j}\notin [m_1L,L] \mid L\in \mathfrak{X}\big)\leq \xi,
\ee
Thus,  we let 
$$C_{L,\eta,\gep}:=\Big\{V\colon \big|\big\{i\in \big\{0,\dots,\tau_1-1\big\}\colon\;  |V_i|\leq \eta L^{1/3}|\geq \gep L^{2/3} \Big\}$$
and \eqref{limss2} will be proven once we show that 
\be{contborne}
\lim_{\eta \to 0} \limsup_{L\to \infty} \sup_{\ell\in [m_1 L,L]}\,  \max_{x\in \{0,\mu_\beta\}} {\bf P}_{\beta,x} \big(C_{L,\eta,\gep} \mid X=\ell\big)=0.
\ee
From \cite[Proposition 3.4]{CarPet17b} we know that 
$\tau/\ell^{2/3}$ under $\probmubeta{\cdot  \mid X=\ell}$ (or under ${\bf P}_\beta(\cdot\mid X=\ell)$) converges in distribution towards $\mathfrak{R}$ the extension of a Brownian excursion normalized by its area.
Thus, for every $\xi>0$, there exists $[\alpha_1,\alpha_2]\subset (0,\infty)$ such that for $L$ large enough, we state that for every $\ell\in [m_1L,L]$ we have 
$$\max_{x\in \{0,\mu_\beta\}} {\bf P}_{\beta,x}\Big(\, \frac{\tau}{\ell^{2/3}}\in [\alpha_1,\alpha_2]\,  \big | \, X=\ell \Big) \leq \xi.$$ 
As a consequence,  \eqref{jko} will be proven once we show that 
\be{contborne2}
\lim_{\eta \to 0} \limsup_{L\to \infty} \sup_{\ell\in [m_1 L,L]}  \max_{x\in \{0,\mu_\beta\}}  {\bf P}_{\beta,x} \big(C_{L,\eta,\gep} \cap \big\{\tfrac{\tau}{\ell^{2/3}}\in [\alpha_1,\alpha_2] \big\}\mid X=\ell\big).
\ee
At this stage, we introduce the notation  
$(\hat V_s)_{s\in [0,1]}=(\frac{1}{\sqrt{\tau}} |V_{\lfloor s\tau\rfloor}|)_{s\in [0,1]}$ and we note that 
for $\ell\in [m_1L,L]$ and $\tau\in [\alpha_1 m_1^{2/3} L^{2/3}, \alpha_2 L^{2/3}]$ we have  
\be{upset}
C_{L,\eta,\gep} \cap \big\{\tfrac{\tau}{\ell^{2/3}}\in [\alpha_1,\alpha_2]\big\} \subset \bigg\{\int_0^1\  \ind_{\Big\{| \hat V_s|\leq \frac{\eta}{\sqrt{\alpha_1} m_1^{1/3}}\Big\}} 
ds\geq \frac{\gep}{\alpha_2} \bigg\},
\ee
where we have used that $L^{1/3}\leq \frac{\sqrt{\tau}}{\sqrt{\alpha_1} m_1^{1/3}}$
and $L^{2/3}\geq \frac{\tau}{\alpha_2}$.  
At this stage, we use  the convergence established in  \cite[Theorem A]{CarPet17b} and \eqref{upset} to assert that 
\be{grandlim}
\limsup_{L\to \infty} \sup_{\ell \in [m_1 L,L]}  \max_{x\in \{0,\mu_\beta\}}  {\bf P}_{\beta,x} \big(C_{L,\eta,\gep} \cap \big\{\tfrac{\tau}{\ell^{2/3}}
\in [\alpha_1,\alpha_2] \big\}\mid X=\ell\big)
\leq \mathbb{P} \bigg( \int_0^1\  \ind_{\Big\{\frac{1}{\sqrt{\mathfrak{R}}} \cE(s\mathfrak{R})\leq \frac{\eta}{\sqrt{\alpha_1} m_1^{1/3}}\Big\}} 
ds\geq \frac{\gep}{\alpha_2}\bigg)
\ee
and we conclude by observing that $\mathbb{P}$ almost surely $\cE$ is continuous on $[0,\mathfrak{R}_{\cE}]$ and equals $0$
at $0$ and $\mathfrak{R}_\gep$ only. Thus, the r.h.s. in \eqref{grandlim} vanishes as $\eta\to 0$ and this concludes the proof.

%
%
%
%

\epr
%

At this stage, it remains to prove \eqref{limpro}.
\bpr
Let us define 
 $$D_{L,\gep}:=\bigg\{\sup_{c\in [0,\infty[} \bigg| \frac{K_{\lfloor cL^{2/3}\rfloor \wedge \xi_L}}{L}-A_{c\wedge
a_1}\bigg|\geq \gep\bigg\}$$
and for $\eta>0$ we set 
$$E_{L,\eta}=\bigg\{\sup_{s\in [0,1]} \Big |\frac{\xi_{\lfloor sL\rfloor}}{L^{2/3}}-g(s)\Big|\leq \eta\bigg\}.$$
By lemma \ref{cvpstau}, the proof of \eqref{limpro} will be complete once we show that  there exists $\eta>0$ such that 
$\lim_{L\to \infty} P(D_{L,\gep} \cap E_{L,\eta})=0$. Under the event $D_{L,\gep} \cap E_{L,\eta}$
there exists a $c\in [0,a_1\wedge \xi_L/L^{2/3}]$ such that $\big| \frac{1}{L} K_{\lfloor cL^{2/3}\rfloor}-A_{c}\big|\geq \gep$. Assume that $K_{\lfloor cL^{2/3}\rfloor}\geq (A_{c}+\gep) L$ (the other case is treated similarly). Then 
$cL^{2/3} \geq \xi_{K_{\lfloor cL^{2/3}\rfloor}}\geq \xi_{(A_{c}+\gep)L}$ and under $E_{L,\eta}$ we can state that 
$\xi_{(A_{c}+\gep)L}\geq (a_{A_{c}+\gep}-\eta) L^{2/3}$ so that finally
$c\geq a_{A_{c}+\gep}-\eta$. But since $a_{A_{c}}=c$  we get 
$\eta\geq \min\{a_{u+\gep}-a_{u}\colon\, u\in [0,1-\gep]\}$. Since $a$ is $P$-almost surely continuous and strictly increasing on $[0,1]$ we complete the proof of the Lemma by claiming that 
$$\lim_{\eta \to 0} P(\min\{a_{u+\gep}-a_{u}\colon\, u\in [0,1-\gep]\}\leq \eta)=0.$$
\epr

\section{Proof of Theorem \ref{midlaux}} \label{thD}

\subsection{Truncated version of Theorem \ref{midlaux}, i.e., proof of  Proposition \ref{propded}}\label{step3} 
We recall Definition \ref{deftiv2} and for every $L\in \N$, we will generate a random walk path 
$(V_i)_{i=0}^{\xi_L}$ of law ${\bf P_\beta}(\cdot\, |\, L\in \mathfrak{X})$. To begin with, we use Theorem \ref{convhaus} in combination with Skorohod's representation theorem 
(the set of closed subsets in $[0,1]$ endowed with the Hausdorf distance being a Polish space) to assert that 
there exists a sequence of random sets $\mathfrak{X}_L$ and a random  set $\mathfrak{X}_\infty$ defined on the same probability space $(\Omega_1,\cA_1,\mathbb{P}_1)$ and 
such that 
\begin{enumerate}
\item for every  $L\in \N$, $\mathfrak{X}_L$ has the same law as $\mathfrak{X}\cap [0,L]$ with $\mathfrak{X}$ sampled from  ${\bf P}_{\beta}(\cdot\, |\, L\in \mathfrak{X})$. We will denote by 
$(X^L_j)_{j=1}^{v_L}$ the inter-arrivals of $\mathfrak{X}_L$ , i.e., 
$$\mathfrak{X}_L=\big\{0,X_1^L,\dots,X_1^L+\dots+X_{v_L-1}^L,L\big\},$$
\item $X_\infty$ is a $C_{1/3}$ regenerative set intersected with $[0,1]$ and conditioned on $1\in \mathfrak{X}$,
\item $ \lim_{L\to \infty} \frac{1}{L} \mathfrak{X}_L(\omega)=\mathfrak{X}_\infty(\omega)$  for $\mathbb P_1$-a.e. $\omega_1$.
\end{enumerate}

For $k\in \N$ and  every $\omega_1\in \Omega_1$ we denote by 
$(d_1^\infty,f_1^\infty ),\dots,(d_r^\infty,f_r^\infty)$ the positions of the maximal intervals of $[0,1]$ which are not intersecting $\mathfrak{X}_\infty$ and 
are larger than $1/k$. Note that $r:=r^\infty(\omega_1)$ is random and bounded above by $k$. Note also that $X_j^\infty:=f_j^\infty-d_j^\infty>1/k$ for every $j\in \{1,\dots,r\}$.  
We also denote by $r_L$ the number of intervals of $[0,L]$ larger than $L/k$ whose extremities are consecutive points of $\mathfrak{X}_L$. We let $(d_j^L,f_j^L)_{j=1}^{\,r_L}$ be the extremities of 
those intervals and $(X_{e_{j,L}}^L)_{j=1}^{\,r_L}$ their associated inter-arrivals (i.e., $1\leq e_{1,L}<\dots<e_{r_L,L}\leq v_L$).  Because of the almost sure convergence of $\frac{1}{L} \mathfrak{X}_L$ towards $\mathfrak{X}_\infty$ we can claim that for $L$ large enough $r_L$ equals $r$ and moreover that
for $\mathbb{P}_1$-a.e. $\omega_1$ and $j\in \{1,\dots,r\}$
\begin{align}\label{convsetre}
\lim_{L\to \infty} (d_j^L,f_j^L)=(d_j^\infty,f_j^\infty) \quad \text{and} \quad \lim_{L\to \infty} X_{e_{j,L}}^L=X_j^\infty.
\end{align}
We will also need the notations 
$$
\Lambda_\infty=\cup_{j=1}^r [d_j^\infty,f_j^\infty]\quad  \text{and}\quad \Lambda_L=\cup_{j=1}^{r_L} [d_j^L,f_j^L).
$$

At this stage, we sample a family of independent random variables $(Y^{j,N})_{(j,N)\in \N^2}$ on  a probability space 
$(\Omega_2, \cA_2, \mathbb{P}_2)$ as follows 
\begin{enumerate}
\item for every $(j,N)\in \N^2$ the random variable $Y_{j,N}$ is a Bernoulli with parameter ${\bf P}_{\beta,\mu_\beta}(V_0\neq 0 |\, X=N)$. 
\end{enumerate}
At this stage we use Theorem \ref{convexc} and the Skorohod's representation theorem to define on the same probability space 
$(\Omega_3,\cA_3,\mathbb{P}_3)$ 
a family of independent sequences of discrete random processes
$\big\{(\mathfrak{E}^{j,y}_N)_{N\in \N}, j\in \N, y\in \{0,1\}\}$ and a family of independent continuous random processes
$\big\{(\cE^{j,y}(s),D^{j,y} (s))_{\,s\in [0,a^j(1)]},\, j\in \N, y\in \{0,1\} \big\}$
 such that 
\smallskip

\begin{enumerate}
\item for every $(j,N)\in \N^2$ the random process 
$\mathfrak{E}_{N}^{j,0}=(V_i^{j,0,N})_{i=0}^{\tau_{j,0,N}}$ has for law ${\bf P}_{\beta,\mu_\beta}(\cdot\,| \, V_0=0,\, X=N)$, 
and the random process $\mathfrak{E}_{N}^{j,1}=(V_i^{j,1,N})_{i=0}^{\tau_{j,1,N}}$ has for law ${\bf P}_{\beta,\mu_\beta}(\cdot\,| \, V_0\neq 0,\, X=N)$,
\item for every $(j,y)\in \N\times \{0,1\}$,  $\cE^{j,y}$ is a Brownian excursion normalized by its area and $a_{j,y}$ is the inverse function of this 
area and $D^{j,y}$ is a standard brownian motion independent of $\cE^{j,y}$,
\item for every $(j,y)\in \N\times \{0,1\}$ and for $\mathbb{P}_3$-a.e. $\omega_3$ 
\begin{align}\label{convsko}
\lim_{N\to \infty} \sup_{s\in [0,1]} \Big| \frac{1}{N^{1/3}} \big| V^{j,y,N}_{\xi^{j,y,N}_{sN}}\big |-\cE^{j,y} (a_{j,y}(s))\Big|=0\quad \text{and} \quad \lim_{N\to \infty}  \sup_{s\in [0,1]} \Big| \frac{1}{N^{1/3}} M^{j,y,N}_{\xi^{j,y,N}_{sN}}-D^{j,y} (a_{j,y}(s))\Big|,
\end{align}
where $\xi^{j,y,N}$ is the pseudo-inverse of the geometric area (plus extension) of $\mathfrak{E}_N^{j,y}$ defined as in \eqref{taul} and where $M^{j,y,N}$ is the center-of-mass walk associated with $\mathfrak{E}^{j,y}_N$, i.e., 
\begin{align}\label{defcentexc}
\nonumber M^{j,y,N}_i&=\sum_{t=0}^{i-1} (-1)^{t-1} |V_{t}^{j,y,N}|+(-1)^{i-1} \frac{|V_{i}^{j,y,N}|}{2}, \quad i\in \{0,\dots,\tau_{j,y,N}-1\},\\
M^{j,y,N}_{\xi^{j,y,N}_N}&=M^{j,y,N}_{\tau_{j,y,N}}=\sum_{t=0}^{\tau_{j,y,N}-1} (-1)^{t-1} |V_{t}^{j,y,N}|.
\end{align}
\end{enumerate}
Let us note that for every $(j,N)\in \N^2$, the random process $\mathfrak{E}_{N}^{j,Y_{j,N}}$ is an excursion of law 
${\bf P}_{\beta,\mu_\beta}(\cdot \,|\, X=N)$.
To lighten the notations we will drop the $L$ dependency of $(X_{j}^L)$,  $(d_j^L,f_j^L)$ and $e_{j,L}$ when there is no risk of confusion.

With these tools in hand, we apply for every $L\in \N$  the construction of $V$ (given in Definition \ref{deftiv2}) with law ${\bf P}_{\beta,\mu_\beta}(\cdot\, |\, L\in \mathfrak{X})$ on a probability space 
$(\times_{i=1}^4 \Omega_i, \otimes_{i=1}^4 \cA_i, \otimes_{i=1}^4 \mathbb P_i)$.  Of course
$\mathfrak{X}_L$ plays the role of $\mathfrak{X}$, then for every $t\in \{1,\dots,v_L\}$ we sample on $(\Omega_4, \cA_4, \mathbb P_4)$ an excursion $V^t$ of law ${\bf P}_{\beta,\mu_\beta}(\cdot\, |\, X=X_t)$ and independently  an i.i.d. sequence of Bernoulli trials  $(\gep_i)_{i\in \N}$.
We do so except that for the indices $(e_{j})_{j=1}^{r_L}$ we replace each excursion $V^{e_j(\omega_1)}(\omega_4)$ by the excursion  $\mathfrak{E}^{j,Y_{j,X_{e_j}}(\omega_2)}_{X_{e_{j}}(\omega_1)}(\omega_3)$   defined above.  

\smallskip

 For every $s \in [0,1]$ we set $j_{s}:=\min \{t\leq r_L\colon\, f_t\geq  sL\}$ and for $j\in \{1,\dots,r_L\}$ we let $\alpha_{j,L}$ be the product of $(-1)^{\tau_{e_{j}-1}}$ with the sign of the 
 excursion of $V$ indexed by $e_j$, i.e., 
\begin{align}\label{defalpha}
\alpha_{j,L}&=(-1)^{\tau_{e_{j}-1}} \sign(V_{\tau_{e_j}-1})=(-1)^{\tau_{e_j-1}}\Big(\ind_{\big\{V_{\tau_{e_j-1}}=0\big\}}\,  
\gep_{e_j}- \ind_{\big\{V_{\tau_{e_j-1}}\neq 0\big\}} \sign(V_{\tau_{e_j-1}-1})\Big)\\
\nonumber &=(-1)^{\tau_{e_j-1}}\Big(\ind_{\big\{Y_{j,X_{e_j}}=0\big\}}\,  
\gep_{e_j}- \ind_{\big\{Y_{j,X_{e_j}}=1\big\}} \sign(V_{\tau_{e_j-1}-1})\Big).
\end{align}
For the ease of notations, for $t\leq k$ and $u\in [0,\infty)$,  we will use the following shortcuts  in the computations below: 
\begin{align}\label{shorter}
\widehat V_{u}^t:=V^{\, t,\,Y_{t,X_{e_{t}}},\, X_{e_{t}}}_{\xi^{t,\,Y_{t,X_{e_{t}}},\, X_{e_{t}}}_{u}} \quad \text{and} \quad \widehat M_{u}^t:=M^{\, t,\,Y_{t,X_{e_{t}}},\, X_{e_{t}}}_{\xi^{t,\,Y_{t,X_{e_{t}}},\, X_{e_{t}}}_{u}} 
\end{align}
We recall (\ref{defexcmid}--\ref{tronc}) and \eqref{defcentexc} and we observe that the truncated processes 
 $\widetilde M_{L,k}$ and $\widetilde V_{L,k}$ obtained  from $V$ can be written as  
\begin{align}\label{defvl}
\nonumber |\widetilde V_{L,k}|(s)&= \ind_{\Lambda_L} (sL) \  \frac{1}{L^{1/3}}\  |V_{\xi_{sL}}|=\ind_{\Lambda_L} (sL)\  \frac{1}{L^{1/3}}\     \big|\widehat V_{sL-d_{j_{s}}}^{\, j_s}\big| \\
&= \ind_{\Lambda_L} (sL)\ \bigg [\frac{X_{e_{j_{s}}}}{L}\bigg ]^{1/3}\,   \Bigg (  \bigg[\frac{1}{X_{e_{j_{s}}}}\bigg ]^{1/3}\  \big|\widehat V_{sL-d_{j_{s}}}^{\, j_s}\big|   \Bigg ),
\end{align}
and
\begin{align}\label{defml}
\widetilde M_{L,k}(s)&=\frac{1}{L^{1/3}} \Bigg[\sum_{t=1}^{j_{s}-1} M^{\text{exc}}(e_{t})+ \ind_{\Lambda_L} (sL)
\bigg(\sum_{i=\tau_{e_{j_{s}}-1}}^{\xi_{sL}-1} (-1)^{i-1} V_i+ (-1)^{\xi_{sL}-1} \frac{V_{\xi_{sL}}}{2}\bigg)\Bigg]\\
\nonumber & = \sum_{t=1}^{j_{s}-1} \Bigg[\frac{X_{e_t}}{L}\Bigg]^{1/3}  \alpha_{t,L}\  \Bigg(\Bigg[\frac{1}{X_{e_t}}\Bigg]^{1/3}  \widehat M^{\,t}_{X_{e_t}} \Bigg)+ \ind_{\Lambda_L} (sL)
 \Bigg[\frac{X_{e_{j_s}}}{L}\Bigg]^{1/3}  \alpha_{j_s,L}\  \Bigg(\Bigg[\frac{1}{X_{e_{j_s}}}\Bigg]^{1/3} \widehat  M^{\, j_s}_{sL-d_{j_s}}\Bigg).
\end{align}
To complete the proof of Proposition \ref{propded} it remains to identify the limiting distribution of  $(|\widetilde V_{L,k}|, \widetilde M_{L,k})$ (defined in in (\ref{defvl}--\ref{defml})). 
The tension of  $(|\widetilde V_{L,k}|, \widetilde M_{L,k})$  is ensured first by the fact that $r_L\leq k$ (for every $L\in \N$) and then by the fact that for every $(j,y)\in \{1,\dots,k\}\times \{0,1\}$
the convergences in \eqref{convsko}  ensure that for $N\geq N_{j,y}(\omega_3)$ the modulus of continuity of 
$\Big(\frac{1}{N^{1/3}} V^{j,y,N}_{\xi^{j,k,N}_{sN}},\frac{1}{N^{1/3}} M^{j,y,N}_{\xi^{j,y,N}_{sN}}\Big)_{s\in [0,1]}$ are arbitrarily small. We conclude by saying that with high probability $L/k$ is larger than $\max_{j\leq k, y\in\{0,1\}} N_{j,y}$.  Therefore, we need to obtain the limiting law of finite dimensional distributions of $(|\widetilde V_{L,k}|, \widetilde M_{L,k})$
as $L\to \infty$.
To that aim, 
we define two auxiliary processes $|\tilde B_{k,L}|$ and $\tilde D_{k,L}$ as 
\begin{align}\label{def:B}
|\tilde B_{k,L}|(s)&=\ind_{\Lambda_\infty}(s)\   (X_{j_s}^\infty)^{1/3} \   \cE^{j_s,Y_{j_s}}\Big(a_{j_s,Y_{j_s}}\Big(\tfrac{s-d_{j_s}^\infty}{X_{j_s}^\infty}\Big)\Big), \quad s\in [0,1]
\end{align}
where for every $t\leq r$, we set 
$Y_t:=Y_t(\omega_1,\omega_2)=Y_{t,\, X_{e_{t,L}}^L}$  (which actually explains  the $L$-dependency of  $|\widetilde B_{L,k}|$) and 
\begin{align}\label{def:D}
\tilde D_{k,L}(s)&=\sum_{t=1}^{j_s-1}  (X_t^\infty)^{1/3} \ \alpha_{t,L}\  D^{t,Y_t} \big(a_{t,Y_t}(1)\big)+ \ind_{\Lambda_\infty}(s)\   (X_{j_s}^\infty)^{1/3} \  \alpha_{j_s,L}  \ D^{j_s,Y_{j_s}} \Big(a_{j_s,Y_{j_s}}\Big(\tfrac{s-d_{j_s}^\infty}{X_{j_s}^\infty}\Big)\Big), \quad s\in [0,1].
\end{align}
We will complete the proof by first observing that (\ref{convsetre}--\ref{convsko}) implies that  for every $s\in [0,1]$  the following convergences occurs for $\otimes_{i=1}^4 \mathbb P_i$-a.e. $(\omega_i)_{i=1}^4\in \times_{i=1}^4 \Omega_i$, i.e., 
\be{convdiff}
\lim_{L\to \infty}   |\widetilde V_{L,k}(s)|-|\tilde B_{k,L}(s)|=0 \quad \text{and} \quad \lim_{L\to \infty}  \  \widetilde M_{L,k}(s)-\tilde D_{k,L}(s)=0
\ee
and then by showing that  for every $L\in \N$, the two dimensional process $(|\tilde B_{k,L}|, \tilde D_{k,L})$ has the same distribution as that of $(|\tilde B^k|, \tilde D^k)$
defined in the statement of Proposition \ref{propded}. To prove this last point we prove below that we do not change the law of $(|\tilde B_{k,L}|, \tilde D_{k,L})$ by removing  the terms $(\alpha_{t,L})_{t=1}^r$ in \eqref{def:D}. The resulting process $(|\tilde B_{k,L}|, \tilde D_{k,L})$  does not depend on $L$ anymore and a straightforward consequence of Theorem \ref{Bessel}
ensures that this process is distributed as  $(|\tilde B^k|, \tilde D^k)$.
 
 For $t\in \N$ we let 
$\cF_t$ be the sub-$\sigma$-algebra of $\cA_3$ defined as 
\be{defF}
\cF_t=\sigma(\mathfrak{E}_{N}^{j,y}, \mathcal{E}^{j,y},D^{j,y};\,  j\leq t,\, y\in \{0,1\},\, N\in \N).
\ee
The proof will be complete once we show that for every $t\leq r$ the law of the 
\be{firstlaw}
\Big(\cE^{t,Y_{t}}\Big(a_{t,Y_{t}}\Big(\tfrac{s-d_{t}^\infty}{X_{t}^\infty}\Big)\Big),
\alpha_{t,L}\,D^{t,Y_{t}}\Big(a_{t,Y_{t}}\Big(\tfrac{s-d_{t}^\infty}{X_{t}^\infty}\Big)\Big)\Big)_{s\in [0,d_t^\infty+X_t^\infty]}
\ee
conditioned on the sub-$\sigma$-algebra $\cA_1\otimes \cA_2 \otimes \cF_{t-1}\otimes \cA_4$  equals the law of 
\be{seclaw}
\Big(\cE\Big(a\Big(\tfrac{s-d_{t}^\infty}{X_{t}^\infty}\Big)\Big),
D\Big(a\Big(\tfrac{s-d_{t}^\infty}{X_{t}^\infty}\Big)\Big)\Big)_{s\in [0,d_t^\infty+X_t^\infty]}
\ee 
with $\cE$ a Brownian excursion normalized by its area, $a$ the inverse function of this 
area and $D$ is a standard brownian motion independent of $\cE$. Such an equality indeed allows us to compute the characteristic function of any 
finite dimensional distribution of $(|\tilde B_{k,L}|, \tilde D_{k,L})$ by conditioning successively on $\cF_{t-1}$ from $t=r$ up to $t=1$, getting rid at each step 
of the random variable $\alpha_{t,L}$. 

To prove this later  equality in law  we note first  that the law of $$\Big(\cE^{t,Y_{t}}\Big(a_{t,Y_{t}}\Big(\tfrac{s-d_{t}^\infty}{X_{t}^\infty}\Big)\Big),\,D^{t,Y_{t}}\Big(a_{t,Y_{t}}\Big(\tfrac{s-d_{t}^\infty}{X_{t}^\infty}\Big)\Big)\Big)_{s\in [0,d_t^\infty+X_t^\infty]}$$  conditioned on  $\cA_1\otimes \cA_2 \otimes \cF_{t-1}\otimes \cA_4$
does not depend on $Y_t(\omega_1,\omega_2)$ and equals the law of  \eqref{seclaw}, second that the random variable $\alpha_{t,L}$ is  $\cA_1\otimes \cA_2 \otimes \cF_{t-1}\otimes \cA_4$-measurable and takes values $-1$ and $1$ only, third  that for any $c\in \{-1,1\}$ the laws of 
$(\cE,D)$ and $(\cE, c D)$ are equal.

\subsection{The center-of-mass walk outside large excursions, i.e., proof of Proposition~\ref{proposui}}\label{step4}

Proposition \ref{proposui} contains two limits. We will display the proof of the second limit only since the proof of the first limit is way easier. 
To be more specific, the first limit gives some control on the fluctuations of the $V$ random walk sampled from 
${\bf P}_{\beta,\mu_\beta}(\cdot\, |\cdot L\in \mathfrak X)$ outside its largest excursions (in terms of geometric area swept). The second 
limit is much more involved, essentially because, despite the $V$ random walk, the center-of-mass walk does not come back close to the origin at the end of every excursion of $V$.
 
We recall \eqref{tronc3} and we observe that for every $s\in [0,1]$ we have 
\be{ecrit}
\widetilde M_L(s)-\widetilde M_{L,k}(s)=\frac{1}{L^{1/3}} \big(M_{\xi_{sL}}- M_{L/k}^{+}(\xi_{sL})\big)
\ee
Therefore, Proposition \ref{proposui} will be proven once we show that  for every  $\eta>0$

\be{difftruncb}
\lim_{k\to \infty} \limsup_{L\to \infty} 
\probmubeta{\max_{n \leq \xi_L} \big | M_{\xi_{sL}}- M_{L/k}^{+}(\xi_{sL}) \big | \ \geq \eta L^{1/3} \ \Big |\  L\in\mathfrak{X}}=0.
\ee

The proof of \eqref{difftruncb} is divided into $4$ steps. In the first step, we prove \eqref{difftruncb} subject to  Claims \ref{c1} and \ref{c2}. Claim \ref{c1}  provides  a control on the fluctuations of the process whose increments are the altitude differences of the center-of-mass walk between the endpoints of each small excursions (in terms of area swept). Claim \ref{c2}, in turn,  provides  a control on the  fluctuations of the center-of-mass walk inside each such small excursions. Those two claims are subsequently proven in Steps 2 and 3 respectively. Note that for the proof of Claims 
\ref{c1} and \ref{c2} we use the alternative  construction of the $V$ trajectory \emph{excursion by excursion} displayed in Definition \ref{deftiv} (see Section \ref{mren}). Note also that proving Claims 
\ref{c1} and \ref{c2} requires to use Lemma \ref{boundtauti}  which is proven in step 4 and provides an upper-bound on the expectation of an auxiliary stopping time.

\subsubsection*{Step 1: Proof of \eqref{difftruncb} subject to Claims \ref{c1} and \ref{c2}}
We recall \eqref{defexcmid} and for every $j\in \N$ we set 
\begin{align}\label{deftg}
T_{j,\frac{L}{k}}:=&  \sum_{r=1}^j M^{\text{exc}}(r)  \, \ind_{\big\{X_r\leq \frac{L}{k}\big\}} \\
\nonumber F_j:=& \max_{i\in \{\tau_{j-1},\dots,\tau_j-1\}}
\Big| \sum_{s=\tau_{j-1}}^{i-1} (-1)^{s-1} V_s+ (-1)^{i-1} \frac{V_i}{2}\Big|=\max_{i\in \{0,\dots,\tau_j-\tau_{j-1}-1\}} 
\Big| \sum_{s=0}^{i-1} (-1)^{s-1} V_s^j+ (-1)^{i-1} \frac{V_i^j}{2}\Big|.
\end{align}
In this step we prove  \eqref{difftruncb} subject to  Claims \ref{c1} and \ref{c2} and to Lemma \ref{boundvl} below.
\begin{claim}\label{c1}
For every $c>0$ and $\eta>0$, 
\be{diffale1}
\lim_{k\to \infty} \limsup_{L\to \infty} 
\probmubeta{ \max_{j \leq cL^{1/3}} \big |T_{j,\frac{L}{k}}\big| \ \geq \eta L^{1/3}}=0.
\ee
\end{claim}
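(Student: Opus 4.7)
The plan is to prove Claim~\ref{c1} by combining a sharp second-moment estimate on $T_{j,L/k}$ with a maximal inequality obtained after conditioning on the excursion moduli, the key mechanism being the independence provided by the refresh events $\{V^r_0=0\}$ in the reconstruction of Definition~\ref{deftiv}. Using that reconstruction, I would decompose $M^{\mathrm{exc}}(r) = \sigma_r \tilde{W}_r$, where $\sigma_r\in\{\pm 1\}$ is the sign of the $r$-th excursion and $\tilde{W}_r := (-1)^{\tau_{r-1}-1}\sum_{l=0}^{\mathfrak{N}_r-1}(-1)^l|V^r_l|$ is a function of the moduli $(|\mathfrak{E}_i|)_{i\le r}$ alone. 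By Remark~\ref{remindep} and Definition~\ref{deftiv}, conditional on the moduli, $(\sigma_r)$ is a $\{\pm 1\}$-valued Markov chain that alternates deterministically ($\sigma_r=-\sigma_{r-1}$) when $V^r_0\neq 0$ and is refreshed to an independent Rademacher variable $\epsilon_r$ when $V^r_0=0$, the latter event occurring with positive probability $\mu_\beta(0)=1-e^{-\beta/2}$, independently across $r$. Writing $n_0:=1<n_1<\cdots$ for the refresh indices, the independence and symmetry of $\sigma_{n_0},\sigma_{n_1},\ldots$ will be the source of all the cancellations.

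\textbf{Key second-moment estimate.} First I would compute $\mathbf{E}_{\beta,\mu_\beta}[T_{j,L/k}^2]$ by conditioning on the moduli. The conditional covariance identity $\mathbf{E}[\sigma_r\sigma_s\mid\text{moduli}] = (-1)^{s-r}\mathbf{1}_{\{\text{no refresh in }(r,s]\}}$ reduces the off-diagonal contribution to a geometric sum with parameter $(1-\mu_\beta(0))=e^{-\beta/2}$, so the off-diagonal is dominated by the diagonal. The diagonal $\sum_r \mathbf{E}[\tilde{W}_r^2\,\mathbf{1}_{X_r\le L/k}]$ is then controlled using the area tail $\mathbf{P}_{\beta,\mu_\beta}(X_r=n)\sim c_\beta n^{-4/3}$ from~\eqref{tailest} together with the conditional second-moment scaling $\mathbf{E}[\tilde{W}_r^2\mid X_r=n] = O(n^{2/3})$, which I would extract from Theorem~\ref{convexc} and the moment estimates of the companion paper~\cite{CarPet17b}. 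This yields the per-term bound $\mathbf{E}[\tilde{W}_r^2\mathbf{1}_{X_r\le L/k}] = O((L/k)^{1/3})$ and, summing over $r\le cL^{1/3}$, the bound $\mathbf{E}_{\beta,\mu_\beta}[T_{j,L/k}^2] = O(L^{2/3}/k^{1/3})$ uniformly in $j\le cL^{1/3}$.

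\textbf{Upgrading to the maximum.} To pass from the $L^2$-estimate to the uniform bound on $\max_j|T_{j,L/k}|$ I would condition on the moduli once more and group the indices $r$ into blocks $[n_\ell,n_{\ell+1})$ separated by refresh indices. This gives the decomposition $T_{j,L/k} = \sum_{\ell<\ell^*(j)}\sigma_{n_\ell} Y_\ell + \sigma_{n_{\ell^*(j)}}Y^{\mathrm{partial}}_{\ell^*(j)}(j)$, with $Y_\ell$ and $Y^{\mathrm{partial}}_\ell(j)$ modulus-measurable and $(\sigma_{n_\ell})_{\ell\ge 0}$ conditionally i.i.d.\ symmetric $\pm 1$. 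The completed-blocks part is then a Rademacher martingale in $\ell$, and Doob's $L^2$-inequality gives $\mathbf{E}[\max_m|\sum_{\ell\le m}\sigma_{n_\ell} Y_\ell|^2\mid\text{moduli}]\le 4\sum_\ell Y_\ell^2$; taking the moduli expectation recovers the same $O(L^{2/3}/k^{1/3})$ bound as before. The partial-block contribution is absorbed using that block lengths $L_\ell := n_{\ell+1}-n_\ell$ are i.i.d.\ geometric with parameter $\mu_\beta(0)$, so $\max_{\ell\le cL^{1/3}} L_\ell = O(\log L)$ with high probability, together with the crude bound $|Y^{\mathrm{partial}}_\ell(j)| \le \sum_u|\tilde{W}_{n_\ell+u}|\mathbf{1}_{X_{n_\ell+u}\le L/k}$. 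Chebyshev then yields $\mathbf{P}_{\beta,\mu_\beta}(\max_{j\le cL^{1/3}}|T_{j,L/k}|\ge \eta L^{1/3}) = O(k^{-1/3})\to 0$ as $k\to\infty$, uniformly in $L$.

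\textbf{Main obstacle.} The principal difficulty is that $T_{j,L/k}$ is not itself a martingale in $j$: off the refresh events the one-step conditional expectation is proportional to $\mathbf{E}[W_r\mathbf{1}_{X_r\le L/k}\mathbf{1}_{V^r_0\neq 0}]$, which need not vanish and can in principle be of order $(L/k)^{2/3}$, so Doob's inequality cannot be applied to $T_{j,L/k}$ directly. The block decomposition outlined above is designed precisely to isolate the independent Rademacher signs $\sigma_{n_\ell}$ and recover a martingale structure at the block level; the remaining technical burden is to keep the logarithmic loss coming from $\max_\ell L_\ell$ under control against the $k^{-1/3}$ smallness supplied by the truncation $X_r\le L/k$, and to verify that the excursion moment bound $\mathbf{E}[\tilde{W}_r^2\mid X_r=n] = O(n^{2/3})$ holds in a quantitative form usable at finite $n$, which relies on the sharp local estimates of~\cite{CarPet17b} rather than the convergence statement of Theorem~\ref{convexc} alone.
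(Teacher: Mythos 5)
Your architecture is sound and is a genuine variant of what the paper does, resting on the same structural facts (the reconstruction of Definition \ref{deftiv}, the independence of the signs $(\gep_i)$ from the moduli in Remark \ref{remindep}, and the i.i.d.\ moduli excursions of Proposition \ref{pro:renstruccritical}). The paper does not condition on the moduli: starting from the one-step recursion \eqref{redf} it compensates, writing $T_{n,\frac{L}{k}}=Q_{n,\frac{L}{k}}-J_{n,\frac{L}{k}}$ as in \eqref{defQJ}, treats the $L^2$-martingale $Q$ by Doob, and treats the drift $J_{n,\frac{L}{k}}=\varphi_1(L/k)\sum_{r\le n}(-1)^{\tau_{r-1}}\sign(V_{\tau_{r-1}-1})$ separately by proving $\varphi_1(L/k)=O((L/k)^{1/6})$ (see \eqref{vp12}--\eqref{bvp}) and showing, via exactly your refresh-time blocks (the returns of $V_{\tau_j}$ to $0$) and Donsker, that the signed sum in \eqref{diffale4} is $O_P(L^{1/6})$. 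Your conditional Rademacher-block martingale removes the need for the compensator and the separate Donsker step, at the price of having to control the partial block at the running index $j$; note that your sketch of that step is too crude as written (bounding by $\max_\ell L_\ell=O(\log L)$ times a termwise maximum over $cL^{1/3}$ excursions loses a factor of order $L^{1/3}k^{-1/6}\log L$, which is not $o(L^{1/3})$), but it is fixable by a union bound over blocks combined with a second-moment bound on the whole block sum, so I do not regard this as the main problem.

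The genuine gap is the quantitative per-excursion input on which everything downstream rests: the uniform bound $\mathbf{E}_{\beta,\mu_\beta}\big[\tilde W_r^{\,2}\,\mathbf{1}_{\{X_r\le x\}}\big]=O(x^{1/3})$ (equivalently $\mathbf{E}[\tilde W_r^{\,2}\mid X_r=n]=O(n^{2/3})$ uniformly in $n$), together with the first-moment bound of order $x^{1/6}$ that your off-diagonal/cross-term estimates implicitly use. You propose to extract this from Theorem \ref{convexc} and ``moment estimates'' of \cite{CarPet17b}, and you yourself flag this as the technical burden; but Theorem \ref{convexc} is a convergence-in-law statement and gives no uniform integrability of the square of the rescaled alternating sum under the conditioning $\{X_1=n\}$, and the companion paper is invoked in the present paper only for convergence and local-limit statements, not for such a uniform conditional second moment. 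In the paper this estimate is precisely the bulk of the proof of Claim \ref{c1}: it is established in the form \eqref{ster2}--\eqref{boundphif} via a change of measure from $\mathbf{P}_{\beta,\mu_\beta}$ to $\mathbf{P}_\beta$, a cyclic-shift rearrangement of the excursion, the local CLT, optional stopping for the martingale $\big(\sum_{i\le n}(-1)^iU_i\big)^2-n\,\mathbf{E}_\beta(U_1^2)$ at the stopping time $\tilde\tau_x$, and finally Lemma \ref{boundtauti} giving $\mathbf{E}_\beta(\tilde\tau_x)\le c\,x^{2/3}$, whose proof (pseudo-excursions, reflection) is itself a substantial argument. Without supplying this estimate, or a precise uniform statement from \cite{CarPet17b} that implies it, the claimed $O(L^{2/3}k^{-1/3})$ second moment and the final $O(k^{-1/3})$ probability bound are not justified, so the proposal does not close as it stands.
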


\begin{claim}\label{c2}
For every $c>0$ and $\eta>0$,
\be{diffale2}
\lim_{k\to \infty} \limsup_{L\to \infty} 
\probmubeta{ \max_{j\leq cL^{1/3} \colon X_j\leq \frac{L}{k}} F_j\ \geq \eta L^{1/3}}=0.
\ee
\end{claim}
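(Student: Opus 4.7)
The plan is to reduce Claim \ref{c2} to a second-moment estimate on a single excursion via a union bound. I will use the alternative construction from Definition \ref{deftiv}, under which the triples $(|\mathfrak{E}|_j, X_j, F_j)_{j \geq 1}$ are i.i.d.~under $\bP_{\beta,\mu_\beta}$, so that
$$\probmubeta{\max_{j \leq cL^{1/3} \colon X_j \leq L/k} F_j \geq \eta L^{1/3}} \leq cL^{1/3}\, \probmubeta{F_1 \geq \eta L^{1/3},\ X_1 \leq L/k}.$$

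Next, I will apply Chebyshev's inequality and decompose over the value of $X_1$, invoking the tail bound \eqref{tailest} $\probmubeta{X_1 = n} \leq C n^{-4/3}$:
$$\probmubeta{F_1 \geq \eta L^{1/3},\ X_1 \leq L/k} \leq \frac{1}{\eta^2 L^{2/3}} \sum_{n=1}^{\lfloor L/k \rfloor} \probmubeta{X_1 = n}\, \espmubeta{F_1^2 \mid X_1 = n}.$$
Granting the uniform moment bound $\espmubeta{F_1^2 \mid X_1 = n} \leq C n^{2/3}$, the sum is dominated by $C \sum_{n \leq L/k} n^{-2/3} \leq C' (L/k)^{1/3}$. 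Substituting back into the union bound produces a quantity of order $c\, \eta^{-2} k^{-1/3}$, which vanishes as $k \to \infty$ independently of $L$.

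The crucial ingredient is therefore the uniform moment estimate $\espmubeta{F_1^2 \mid X_1 = n} \leq C n^{2/3}$, which I will handle in two steps. First, observe that the process $W_i := \sum_{s=0}^{i-1} (-1)^{s-1} V_s + (-1)^{i-1} V_i/2$ satisfies $W_i - W_{i-1} = (-1)^{i-1} U_i/2$, so under $\bP_{\beta,\mu_\beta}$ it is (up to the additive constant $V_0/2$) a square-integrable martingale whose increments are rescaled symmetric Laplace variables. Conditioning further on $\tau_1 = t$, Doob's maximal inequality yields $\espmubeta{F_1^2 \mid X_1 = n, \tau_1 = t} \leq C\, (t + V_0^2)$, once one has verified that the extra conditioning on $\mathfrak{A}_1 = n - t$ does not inflate the typical size of the $W$-increments (the additive $V_0^2$ contribution being absorbed using the exponential tail of $\mu_\beta$). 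Averaging over $\tau_1$ then reduces matters to proving
$$\espmubeta{\tau_1 \mid X_1 = n} \leq C'\, n^{2/3},$$
which is precisely the forthcoming Lemma \ref{boundtauti}.

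The main obstacle will be this last moment estimate on $\tau_1$. Though consistent with the scaling limit in which $\tau_1/n^{2/3}$ converges in distribution to the extension of a normalized Brownian excursion (compatible with Theorem \ref{convexc}), passing from convergence in law to a moment bound requires uniform integrability, which I expect to obtain from the quantitative local limit estimates on the joint law of $(\tau_1, \mathfrak{A}_1)$ proved in the companion paper \cite{CarPet17b}. A further technical subtlety is that the conditioning on $\{X_1 = n\}$ breaks the independence of the $U_i$'s, so the Doob step must be handled with care: since $\espmubeta{\tau_1} = \infty$, one cannot invoke a blunt optional-stopping argument on $W_i^2 - i\sigma_\beta^2/4$ at $\tau_1$ and must instead work conditionally on the excursion length, leveraging the local limit theorem to trade the area conditioning for quantitative control.
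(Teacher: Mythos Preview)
Your reduction via a union bound to the single-excursion estimate $L^{1/3}\,\probmubeta{F_1\ge\eta L^{1/3},\ X_1\le L/k}\to 0$ matches the paper's Step~3. The gap is in your proof of the second-moment bound $\espmubeta{F_1^2\mid X_1=n}\le C n^{2/3}$. Under the conditioning $\{X_1=n,\ \tau_1=t\}$ the increments $(U_i)_{i\le t}$ are no longer independent, so $(W_i)_{i\le t}$ is not a martingale and Doob's maximal inequality does not apply. You acknowledge this, but your proposed fix---``leveraging the local limit theorem to trade the area conditioning for quantitative control''---is too vague: it is not clear which local limit statement from \cite{CarPet17b} would give uniform control on $\espmubeta{(W_i-W_{i-1})^2\mid X_1=n,\tau_1=t}$ for all $i\le t$, nor why such control would be uniform in $(n,t)$. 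Also, the bound $\espmubeta{\tau_1\mid X_1=n}\le C n^{2/3}$ is not Lemma~\ref{boundtauti} (which concerns $\tilde\tau_x=\inf\{j:A_{\min,j}(V)\ge x\}$ under the free law); it is rather the content of \cite[Lemma~4.11]{CP15}, referenced in the proof of Claim~\ref{control}.

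The paper avoids the conditioning obstacle altogether by a cyclic-shift (Vervaat-type) argument: each excursion trajectory $(U_1,\dots,U_s)$ in $\tilde\cW_{s,x}$ is mapped via $\theta^j$ to $s$ distinct bridge trajectories in $\tilde\cO_{s,x}=\{V_s=0,\ A_{\min,s}(V)\le x,\ F_1\ge\tfrac{\eta}{3}L^{1/3}\}$, which trades the excursion constraint for the much weaker constraint $A_{\min,s}\le x$ and gains a factor $1/s$. After a time-reversal symmetrisation (to replace $F_1$ by the half-trajectory maximum $F_{1,1}$) and a local-CLT factor $1/\sqrt{s}$ from the bridge condition $V_s=0$, one is left with $\sum_s s^{-3/2}\,{\bf P}_\beta(\max_{j\le s}|W_j|\ge\eta L^{1/3},\ A_{\min,s}\le L/k)$ under the \emph{unconditioned} law. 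Now $\{A_{\min,p}\le x\}\subset\{p\le\tilde\tau_x\}$, Doob applies to the genuine martingale $(W_{j\wedge\tilde\tau_x})_j$, and the estimate closes via ${\bf E}_\beta[\tilde\tau_x]\le C x^{2/3}$ (Lemma~\ref{boundtauti}). The cyclic shift is the missing idea in your argument: it is precisely what lets one run the martingale step without the excursion/area conditioning.
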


\begin{lemma}[Lemma 4.12 \cite{CP15}] \label{boundvl}
For $\beta>0$ and $\gep>0$ there exists a $c_\gep>0$ such that for $L\in \N$,
$$\probmubeta{v_L\geq c_\gep L^{1/3}}\leq \gep.$$ 
\end{lemma}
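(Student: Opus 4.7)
The plan is to exploit the $\tfrac{1}{3}$-stable heavy tail of $X_1$ displayed in \eqref{tailest}. Under $\probmubeta{\cdot}$, Proposition~\ref{pro:renstruccritical} asserts that the excursions $(|\mathfrak{E}|_k)_{k\ge 1}$ are IID, hence so are the geometric areas $(X_k)_{k\ge 1}$ since $X_k=\mathfrak{N}_k+\mathfrak{A}_k$ is a deterministic functional of $|\mathfrak{E}|_k$. Since $v_L$ is integer-valued, one has the elementary identity/inclusion
\begin{equation*}
\{v_L \ge c L^{1/3}\} = \bigl\{S_{\lceil c L^{1/3}\rceil}\le L\bigr\} \subset \Bigl\{\max_{i\le \lceil c L^{1/3}\rceil} X_i \le L\Bigr\},
\end{equation*}
valid because the $X_i$ are non-negative. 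Combining this with the IID property gives
\begin{equation*}
\probmubeta{v_L\ge cL^{1/3}}\le \bigl(1-\probmubeta{X_1>L}\bigr)^{\lceil cL^{1/3}\rceil}.
\end{equation*}

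Next, I would sum the local asymptotic \eqref{tailest}, namely $\probmubeta{X_1=n}= c_{\mu_\beta,\beta}\,n^{-4/3}(1+o(1))$, to obtain the tail estimate $\probmubeta{X_1>L}\sim 3c_{\mu_\beta,\beta}\,L^{-1/3}$. In particular there exist $\kappa>0$ and $L_0\in\N$ such that $\probmubeta{X_1>L}\ge \kappa\,L^{-1/3}$ for every $L\ge L_0$. Inserting this bound and using $1-x\le e^{-x}$ yields
\begin{equation*}
\probmubeta{v_L\ge cL^{1/3}}\le \exp(-c\kappa),\qquad L\ge L_0,
\end{equation*}
so fixing $c_\gep$ large enough that $\exp(-c_\gep\kappa)\le \gep$ handles the large-$L$ regime. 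For the finitely many values $L<L_0$, one uses the deterministic lower bound $X_i\ge \mathfrak{N}_i\ge 1$, giving $v_L\le L\le L_0$ almost surely; the event $\{v_L\ge c_\gep L^{1/3}\}$ is then already empty as soon as $c_\gep\ge L_0^{2/3}$. Taking $c_\gep$ larger than both thresholds finishes the proof.

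The argument is essentially a one-liner: a $\tfrac{1}{3}$-stable heavy-tailed IID sum with $n=cL^{1/3}$ terms fits inside $[0,L]$ with probability exponentially small in $c$, because in this scaling the tail $\probmubeta{X_1>L}$ and the number of summands $n$ balance to produce a rate $c\kappa$. No step is a genuine obstacle; the only mild technical point is ensuring uniformity in $L$, which is taken care of by the trivial bound $v_L\le L$ for small $L$ and by the quantitative version of \eqref{tailest} for large $L$.
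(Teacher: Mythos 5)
Your argument is correct. Note, however, that the paper does not actually prove this lemma: it is imported from \cite{CP15} (Lemma 4.12 there, stated under ${\bf P}_{\beta}$) with the remark that the proof transfers verbatim to ${\bf P}_{\beta,\mu_\beta}$, so there is no in-paper proof to compare against step by step. Your proposal is a legitimate self-contained substitute, and it uses precisely the two ingredients the present paper supplies: Proposition \ref{pro:renstruccritical}, which makes $(X_k)_{k\ge 1}$ i.i.d.\ under ${\bf P}_{\beta,\mu_\beta}$ because each $X_k=\mathfrak{N}_k+\mathfrak{A}_k$ is a functional of $|\mathfrak{E}|_k$, and the tail estimate \eqref{tailest}, which upon summation yields ${\bf P}_{\beta,\mu_\beta}(X_1>L)\ge \kappa L^{-1/3}$ for $L\ge L_0$. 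The chain $\{v_L\ge cL^{1/3}\}=\{S_{\lceil cL^{1/3}\rceil}\le L\}\subset\{\max_{i\le \lceil cL^{1/3}\rceil}X_i\le L\}$ is valid since the $X_i$ are nonnegative (indeed $X_i\ge 1$), the resulting bound $(1-{\bf P}_{\beta,\mu_\beta}(X_1>L))^{\lceil cL^{1/3}\rceil}\le e^{-c\kappa}$ is uniform in $L\ge L_0$, and your disposal of the finitely many $L<L_0$ via the deterministic bound $v_L\le L$ is fine. Compared with the more standard renewal-theoretic route (estimating ${\bf P}(S_n\le L)$ through the $1/3$-stable scaling $S_n\asymp n^3$), your bound through the single largest excursion is cruder, but it is exactly sufficient here: the lemma only asks for a bound that is small in $c_\gep$ and uniform in $L$, not for any decay in $L$, and your argument delivers that with minimal machinery.
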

Lemma \ref{boundvl} is the same as Lemma 4.12 in \cite{CP15} except that it is stated there under $\mathbf{P}_\beta$ instead 
of $\mathbf{P}_{\beta,\mu_\beta}$ but the proof is literally the same and we will not repeat it here.  
\medskip

We first recall (\ref{deftg}) and we  observe that when $L\in \mathfrak{X}$ then $\xi_L=\tau_{v_L}-1$ and we can write
\be{gfyu}
\max_{n \leq \xi_L} \big | M_{n}- M_{L/k}^{+}(n)\big | =\max_{n \leq \tau_{v_L}-1} \big | M_{n}- M_{L/k}^{+}(n) \big | \leq \max_{j \leq v_L} \big |T_{j,\frac{L}{k}}\big|
+ \max_{j\leq v_L \colon X_j\leq \frac{L}{k}} F_j.
\ee
Therefore, \eqref{difftruncb} (and consequently Proposition~\ref{proposui}) will be proven once we show that for every $\eta>0$
\be{inegwcond}
\lim_{k\to \infty} \limsup_{L\to \infty} 
\probmubeta{ \max_{j \leq v_L} \big |T_{j,\frac{L}{k}}\big| \ \geq \eta L^{1/3} \ \Big |\  L\in\mathfrak{X}}=0,
\ee
and also
\be{ineggl}
\lim_{k\to \infty} \limsup_{L\to \infty} 
\probmubeta{ \max_{j\leq v_L \colon X_j\leq \frac{L}{k}} F_j\ \geq \eta L^{1/3} \ \Big |\  L\in\mathfrak{X}}=0.
\ee
We recall that 
\be{vl}
v_L=\max\{j\geq 1\colon\, X_1+\dots+X_j\leq L\},
\ee
and we set 
\be{vlp}
v'_{3L/4}:=v_L-\min\big\{j\geq 1\colon\, X_1+\dots+X_j\geq \tfrac{L}{4}\big\}.
\ee
We  observe that 
\begin{align}\label{T1}
\max_{j \leq v_L} \big |T_{j,\frac{L}{k}}\big|&\leq \max_{j \leq v_{3L/4}} \big |T_{j,\frac{L}{k}}\big|+ \max_{j \in \{v_{L/2}+1,\dots, v_{L}\}} \big |T_{j,\frac{L}{k}}\big|
\end{align}
but we can bound from above 
\begin{align}\label{T2}
 \max_{j \in \{v_{L/2}+1,\dots, v_{L}\}} \big |T_{j,\frac{L}{k}}\big|&\leq  \big |T_{v_{L/2},\frac{L}{k}}\big|+\max_{j \in \{v_{L/2}+1,\dots, v_{L}\}}
  \Big|\sum_{r=v_{L/2}+1}^j M^{\text{exc}}(r)  \, \ind_{\big\{X_r\leq \frac{L}{k}\big\}}\Big|\\
 \nonumber  &\leq  \big |T_{v_{L/2},\frac{L}{k}}\big|+   \Big|\sum_{r=v_{L/2}+1}^{v_L} M^{\text{exc}}(r)  \, \ind_{\big\{X_r\leq \frac{L}{k}\big\}}\Big|
  + \max_{j \in \{v_{L/2}+2,\dots, v_{L}\}}
  \Big|\sum_{r=j}^{v_L} M^{\text{exc}}(r)  \, \ind_{\big\{X_r\leq \frac{L}{k}\big\}}\Big|.
\end{align}
At this stage, we note that 
\be{egaltemp}
v_L+1-v'_{3L/4}=1+\min\{j\geq 1\colon\, X_1+\dots+X_j\geq \tfrac{L}{4}\}
\ee
and then either $L/4\in \mathfrak{X}$ and the r.h.s. in \eqref{egaltemp} equals $v_{L/4}+1$
or the r.h.s. in \eqref{egaltemp} equals $v_{L/4}+2$. In this last case, we note that 
$v_{L/4}+2\leq v_{L/2}+1$ except if $v_{L/4}=v_{L/2}$ but this means that $X_{v_{L/4}+1}=X_{v_{L/2}+1}>L/4$
and since the excursions associated with a geometric area larger than $L/k$ are not taken into account in the present computation 
it suffices to choose $k\geq 5$ to make sure that
\begin{align}\label{T3}
 \noindent  \max_{j \in \{v_{L/2}+1,\dots, v_{L}\}}
  \Big|\sum_{r=j}^{v_L} M^{\text{exc}}(r)  \, \ind_{\big\{X_r\leq \frac{L}{k}\big\}}\Big|& \leq  \max_{j \in \{1,\dots, v'_{3L/4}\}}
  \Big|\sum_{r=1}^{j} M^{\text{exc}}(v_L+1-r)  \, \ind_{\big\{X_{v_{L}+1-r}\leq \frac{L}{k}\big\}}\Big|.
\end{align}
We can finally use (\ref{T1}--\ref{T3}) to conclude that 
\be{T4}
\max_{j \leq v_L} \big |T_{j,\frac{L}{k}}\big| \leq  \max_{j \leq v_{3L/4}} \big |T_{j,\frac{L}{k}}\big| +2   \max_{j \in \{1,\dots, v'_{3L/4}\}}
  \Big|\sum_{r=1}^{j} M^{\text{exc}}(v_L+1-r)  \, \ind_{\big\{X_{v_L+1-r}\leq \frac{L}{k}\big\}}\Big|.
\ee
In the same spirit we bound from above
\be{boundgl}
 \max_{j\leq v_L \colon X_j\leq \frac{L}{k}} F_j\leq  \max_{j\leq v_{3L/4} \colon X_j\leq \frac{L}{k}} F_j + \max_{j\leq v'_{3L/4} \colon X_j\leq \frac{L}{k}} F_{v_L+1-j}.
\ee
By reversibility, we note that, under $\probmubeta{ \cdot \  |\  L\in\mathfrak{X}}$, the following equalities in distribution hold true 
\begin{align}\label{equala}
\nonumber \max_{j \leq v_{3L/4}} \big |T_{j,\frac{L}{k}}\big|&=_{\text{law}} \max_{j \in \{1,\dots, v'_{3L/4}\}}
\Big|\sum_{r=1}^{j} M^{\text{exc}}(v_L+1-r)  \, \ind_{\big\{X_{v_L+1-r}\leq \frac{L}{k}\big\}}\Big|\\
\max_{j\leq v_{3L/4} \colon X_j\leq \frac{L}{k}} F_j &=_{\text{Law}}  \max_{j\leq v'_{3L/4} \colon X_j\leq \frac{L}{k}} F_{v_L+1-j}.
\end{align}  
Thus,  we can conclude from (\ref{T4}--\ref{equala}) that \eqref{inegwcond} and \eqref{ineggl} will be proven once we show that for every  $\eta>0$ we have 
\begin{align}\label{limitsb}
\nonumber  & \lim_{k\to \infty} \limsup_{L\to \infty} \probmubeta{  \max_{j\leq v_{3L/4} \colon X_j\leq \frac{L}{k}} F_j  \geq \eta L^{1/3} \, \Big|\, L\in \mathfrak{X} }=0,   \\
&  \lim_{k\to \infty} \limsup_{L\to \infty} \probmubeta{ \max_{j\leq v_{3L/4}} \big|T_{j,\frac{L}{k}} \big | \geq \eta L^{1/3} \, \Big|\, L\in \mathfrak{X} }=0. 
\end{align}
At this stage, the  $2$ inequalities in 
\eqref{limitsb} are straightforward consequences of Lemma  \ref{boundvl} above, of Lemma \ref{rem}  and of Claims \ref{c1} and \ref{c2}. Lemmas \ref{rem} and \ref{boundvl}  indeed imply that it is sufficient to prove both inequalities in \eqref{limitsb} without the conditioning $\{L\in \mathfrak{X}\}$ and with 
$cL^{1/3}$ instead of $v_{3L/4}$ so that we are left with Claims \ref{c1} and \ref{c2}.

\bl{rem}
For every $\beta>0$, there exists a $M>0$ such that, for every function 
$G:\cup_{k=1}^\infty \Z^k \to \R^+$ and every $L\in \N$, we have
\be{bounddd}
\bE_{\beta,\mu_\beta}\Big[ G\Big(V_0,\dots,V_{\tau_{v_{3L/4}}-1}\Big)\,   \big|\, L\in \mathfrak{X}\Big] \leq M\   \bE_{\beta, \mu_\beta}\Big[ G\Big(V_0,\dots,V_{\tau_{v_{3L/4}}-1}\Big)\Big].
\ee
\el

\begin{proof}
We compute the Radon Nikodym density of the image measure of $\bP_{\beta,\mu_\beta}(\cdot | L\in \mathfrak{X})$ by $(V_0,\dots,V_{\tau_{v_{3L/4}}-1})$ 
w.r.t. its counterpart without conditioning. For $y\in \{1,\dots,\frac{3L}{4}\}$,  $t\in \{1,\dots,y\}$, $m\in \{0,\dots,t\}$ and $(z_0, z_1,\dots,z_{t-1})\in \Z^t$ satisfying 
$t+|z_1|+\dots+|z_{t-1}|=y$ we obtain
$$\frac{\bP_{\beta,\mu_\beta}(v_{3L/4}=m, S_{m}=y, \tau_{m}=t,  (V_0,\dots,V_{t-1})=(z_0,\dots,z_{t-1})  | L\in \mathfrak{X})}{\bP_{\beta,\mu_\beta}(v_{3L/4}=m, S_{m}=y, \tau_{m}=t,  (V_0,\dots,V_{t-1})=(z_0,\dots,z_{t-1}) )}:=G_L(y)+K_L(y),$$
with 
\begin{align}\label{deff}
\nonumber G_L(y)&= \frac{\sum_{n=0}^{L/8} \bP_{\beta,\mu_\beta}(n\in \mathfrak{X})\ \ \bP_{\beta,\mu_\beta}(X=L-n-y)}{\bP_{\beta,\mu_\beta}(L\in \mathfrak{X}) \ 
P_{\beta,\mu_\beta}(X\geq \frac{3L}{4}-y)},\\
K_L(y)&= \frac{\sum_{n=1+L/8}^{L/4} \bP_{\beta,\mu_\beta}(n\in \mathfrak{X})\ \ \bP_{\beta,\mu_\beta}(X=L-n-y)}{\bP_{\beta,\mu_\beta}(L\in \mathfrak{X}) \ 
\bP_{\beta,\mu_\beta}(X\geq \frac{3L}{4}-y)}.
\end{align}
The rest of the proof consists in showing that  $G_L(y)$ and $K_L(y)$ are   bounded above uniformly in $L\in \N$ and
$y\in \{0,\dots,3L/4\}$.
 We will focus on $G_L$ since $K_L$ can be treated similarly.
The constants $c_1,\dots,c_4$ below are positive and independent of $L,n,y$.
By recalling \eqref{tailest} and since $L-n-y\geq L/4$ when $n\in \{0,\dots,L/8\}$ we can claim that in the numerator of $G_L(y)$, the term
$P_{\beta,\mu_\beta}(X=L-n-y)$ is bounded above by $c_1/L^{4/3}$  independently of $n$ while  $\sum_{n=0}^{L/4} P_{\beta,\mu_\beta}(n\in \mathfrak{X})
\leq c_2 L^{1/3}$ for every $L\in \N$. For the denominator,  \eqref{tailest}  tells us that $\bP_{\beta,\mu_\beta}(L\in \mathfrak{X})\geq  c_3/L^{2/3}$ and that 
$$\bP_{\beta,\mu_\beta}(X\geq \tfrac{3L}{4}-y)\geq \bP_{\beta,\mu_\beta}(X\geq \tfrac{3L}{4})\geq \tfrac{c_4}{L^{1/3}},\quad L\in \N,\  y\in \{0,\dots,3L/4\}.$$  
This terminates the proof.


\end{proof}

\subsubsection*{Step 2: proof  of Claim \ref{c1}}
We recall \eqref{defexcmid} and in the  Definition \ref{deftiv}. For every $j\geq 1$ we set 
\be{defR}
R_j:=\sum_{i=0}^{\tau_{j}-\tau_{j-1}-1} (-1)^{i-1}\,  | V_{\tau_{j-1}+i}|=\sum_{i=0}^{\tau_1^{j}-1} (-1)^{i-1}\,  
|V^j_{i}|
\ee
so that $(R_j)_{j\geq 1}$ is an i.i.d. sequence of random variables satisfying $|R_j|=|M^{\text{exc}}(j)|$ for every $j\geq 1$.
We note that for $n\geq 2$, 
\be{redf}
T_{n,\frac{L}{k}}:=T_{n-1,\frac{L}{k}}+ (-1)^{\tau_{n-1}} \Big[ \ind_{\big\{V_{\tau_{n-1}}=0\big\}} \, \gep_n +  \ind_{\big\{V_{\tau_{n-1}}\neq 0\big\}}\,  (-\text{sign}(V_{\tau_{n-1}-1})) \Big] \,  R_n  \ind_{\big\{X_n\leq \frac{L}{k}\big\}}
\ee
and we define the filtration $(\mathcal{F}_n)_{n\geq 1}$ by
$$\mathcal{F}_n:=\sigma(\gep_1,\dots,\gep_n, (V_i)_{i\leq \tau_{n}-1}), \quad n\in \N.$$
The expectation of $T_{n,\frac{L}{k}}$ conditioned by $\cF_{n-1}$ is easily computed since 
$T_{n-1,\frac{L}{k}}$ is $\cF_{n-1}$ measurable. Therefore, for $n\geq 2$ we obtain
\be{condesp}
{\bf E}_{\beta,\mu_\beta} \big[T_{n,\frac{L}{k}}\, |\, \cF_{n-1}\big]= T_{n-1,\frac{L}{k}}
-(-1)^{\tau_{n-1}} \,  \text{sign}(V_{\tau_{n-1}-1})  \, {\bf E}_{\beta,\mu_\beta} \Big[ \ind_{\big\{V_0\neq 0\big\}} R_1\ind_{\big\{X_1\leq \frac{L}{k}\big\}}\Big],
\ee
and with the subsequent notation
\be{phi}
\varphi_1(x)={\bf E}_{\beta,\mu_\beta} \Big[ \ind_{\big\{V_0\neq 0\big\}} R_1\ind_{\big\{X_1\leq x\big\}}\Big],\quad x\in \N,
\ee
we can rewrite $T_{n,\frac{L}{k}}=Q_{n,\frac{L}{k}}-J_{n,\frac{L}{k}}$ such that 
\begin{align}\label{defQJ}
Q_{n,\frac{L}{k}}&:= M^{\text{exc}}(1)  \, \ind_{\big\{X_1\leq \frac{L}{k}\big\}}+\sum_{r=2}^n M^{\text{exc}}(r)  \, \ind_{\big\{X_r\leq \frac{L}{k}\big\}}+ (-1)^{\tau_{r-1}}  \, 
\text{sign}(V_{\tau_{r-1}-1}) \, \varphi_1(L/k)\\
J_{n,\frac{L}{k}}&:= \varphi_1(L/k)\,  \sum_{r=2}^{n}  (-1)^{\tau_{r-1}}  \, 
\text{sign}(V_{\tau_{r-1}-1}).
\end{align}
Equation \ref{redf} guaranties that $\big(Q_{n,\frac{L}{k}}\big)_{n\in \N}$ is an $L^2$ martingale.  The proof of Claim \ref{c1} will 
therefore be proven once we show that 
\be{diffale3}
\lim_{k\to \infty} \limsup_{L\to \infty} 
\probmubeta{ \max_{n \leq cL^{1/3}} \big |Q_{n,\frac{L}{k}}\big| \ \geq \eta L^{1/3}}=0,
\ee

\be{diffale4}
\lim_{k\to \infty}\limsup_{L\to \infty} 
\probmubeta{ \max_{n \leq cL^{1/3}} \big |J_{n,\frac{L}{k}}\big| \ \geq \eta L^{1/3}}=0.
\ee

We begin with \eqref{diffale3} and we apply Doob inequality with the fact that $\big(Q_{n,\frac{L}{k}}\big)_{n\in \N}$ is an $L^2$ martingale to assert that 
there exists $c_1>0$ such that 
\be{eqaborn}
\probmubeta{ \max_{n \leq cL^{1/3}} \big | Q_{n,\frac{L}{k}} \big| \ \geq \eta L^{1/3}}\leq 
\frac{4 {\bf E}_{\beta,\mu_\beta}\Big [Q_{cL^{1/3},\frac{L}{k}}^{\,2}\Big]}{\eta^2 L^{2/3}}
\ee
and  that 
\begin{align}\label{newmajmar}
{\bf E}_{\beta,\mu_\beta}\Big [Q_{cL^{1/3},\frac{L}{k}}^{\,2}\Big]={\bf E}_{\beta,\mu_\beta}\Big [(M^{\text{exc}}(1))^2  &\, \ind_{\big\{X_1\leq \frac{L}{k}\big\}}\Big]\\
\nonumber &+\sum_{r=2}^{cL^{1/3}} {\bf E}_{\beta,\mu_\beta}\Big [ \big(M^{\text{exc}}(r)  \, \ind_{\big\{X_r\leq \frac{L}{k}\big\}}+ (-1)^{\tau_{r-1}}  \, 
\text{sign}(V_{\tau_{r-1}-1}) \, \varphi_1(L/k)\big)^2\Big]
\end{align}
At this stage, we recall \eqref{defexcmid} and \eqref{defR} which yield that $(M^{\text{exc}}(r))^2=R_r^{\,2}$ for every $r\geq 1$. Moreover $(R_r)_{r\geq 1}$ is an i.i.d. sequence of random variables and therefore, we deduce from \eqref{newmajmar} that 
\be{vaeub}
{\bf E}_{\beta,\mu_\beta}\Big[ Q_{cL^{1/3},\frac{L}{k}}^{\,2}\Big]\leq 2 cL^{1/3} \Big( {\bf E}_{\beta,\mu_\beta}\Big[ R_1^{\,2} \, \ind_{\big\{X_1\leq \frac{L}{k}\big\}}\Big]+ \varphi_1(L/k)^2\Big)\leq 4 c L^{1/3} \,  {\bf E}_{\beta,\mu_\beta}\Big[ R_1^{\,2} \, \ind_{\big\{X_1\leq \frac{L}{k}\big\}}\Big]
\ee
where the second inequality above is the result of Jensen inequality.
As a result, we need to bound from above the quality ${\bf E}_{\beta,\mu_\beta}\Big[ R_1^{\,2} \, \ind_{\big\{X_1\leq \frac{L}{k}\big\}}\Big]$, i.e., 
\begin{align}\label{ster}
{\bf E}_{\beta,\mu_\beta}\Big[ R_1^2 \, \ind_{\big\{X_1\leq \frac{L}{k}\big\}}\Big]
&={\bf E}_{\beta,\mu_\beta}\Big[\Big(-\frac{V_0}{2}+\sum_{i=1}^{\tau_1-1}(-1)^{i-1} \frac{V_i-V_{i-1}}{2}  
+(-1)^{\tau_1-2} \frac{V_{\tau_1-1}}{2}\Big)^2 \ind_{\big\{X_1\leq \frac{L}{k}\big\}}\Big].
\end{align}
At this stage, we substitute an expectation with respect to ${\bf P}_\beta$ to that w.r.t. ${\bf P_{\beta,\mu_\beta}}$ in the r.h.s. 
of \eqref{ster}. We proceed as follow. We define, for $y\in \N$, the set of excursions   $\mathcal{D}_y=\sup_{s\geq 1}  \mathcal{D}_{s,y}$ defined with 
\be{defD}
\mathcal{D}_{s,y}:=\cup_{j=0}^{s-2}\{(v_i)_{i=0}^{s-1}\colon\, v_i=0\ \forall i\leq j-1 \ \text{and}\  v_i>0\ \forall i \geq j\  \ \text{and}\ 
s+\sum_{i=0}^{s-1} |v_i|\leq x \} 
\ee
so that \eqref{ster} becomes 
\begin{align}\label{steravecD}
{\bf E}_{\beta,\mu_\beta}\Big[ R_1^2 \, \ind_{\big\{X_1\leq \frac{L}{k}\big\}}\Big]
=2 \sum_{s\geq 1} \sum_{v\in \mathcal{D}_{s,y}} {\bf P}_{\beta,\mu_\beta}\big((V_i)_{i=0}^{s-1}=&(v_i)_{i=0}^{s-1}\ \text{and} \ V_s\leq 0 \big) \\
\nonumber &\times \Big(-\tfrac{v_0}{2}+\sum_{i=1}^{s-1}(-1)^{i-1} \tfrac{v_i-v_{i-1}}{2}  
+(-1)^{s-2} \tfrac{v_{s-1}}{2}\Big)^2,
\end{align}
where the factor $2$ in front of the r.h.s. comes from the fact that negative and positive excursions contribute the same when computing  ${\bf E}_{\beta,\mu_\beta}\Big[ R_1^2 \, \ind_{\big\{X_1\leq \frac{L}{k}\big\}}\Big]$. At this stage, we recall \eqref{defubeta} and we observe that for all $v\in \mathcal{D}_{s,y}$ we have 
\be{equaliexc}
{\bf P}_{\beta}\big((V_{i+1})_{i=0}^{s-1}=(v_i)_{i=0}^{s-1}\ \text{and}\   V_{s+1}=0 \big)=\tfrac{1}{c_\beta} \big(\ind_{\{v_0=0\}}+\tfrac{1}{2} \ind_{\{v_0\neq 0\}}\big)\   {\bf P}_{\beta,\mu_\beta}\big((V_i)_{i=0}^{s-1}=(v_i)_{i=0}^{s-1}\ \text{and} 
\ V_s\leq 0 \big).
\ee
It remains to combine \eqref{steravecD} with \eqref{equaliexc} to obtain that there exit $c_2>0$ such that 
\begin{align}\label{ster2}
\nonumber{\bf E}_{\beta,\mu_\beta}\Big[ R_1^2 \, \ind_{\big\{X_1\leq \frac{L}{k}\big\}}\Big]
&\leq c_2 {\bf E}_{\beta}\Big[\Big(-\frac{V_1}{2}+\sum_{i=2}^{\tau_1-1}(-1)^{i}\,  \frac{V_i-V_{i-1}}{2}  
+(-1)^{\tau_1-1} \frac{V_{\tau_1-1}}{2}\Big)^2 \, \ind_{\big\{V_{\tau_1-1}>0,\, V_{\tau_1}=0\big\}} \ind_{\big\{X_1\leq 1+\frac{L}{k}\big\}}\Big]\\
&=\frac{c_2}{4} {\bf E}_{\beta}\Big[\big(\sum_{i=1}^{\tau_1} (-1)^{i}\,  U_i \big)^2
\, \ind_{\big\{V_{\tau_1-1}>0,\, V_{\tau_1}=0\big\}}  \, \ind_{\big\{X_1\leq 1+ \frac{L}{k}\big\}}\Big]:= \frac{c_2}{4}\,  \varphi_2(1+\tfrac{L}{k}).
\end{align}
For $x\in \N$, we decompose $\varphi_2(x)$ with respect to the value taken by $\tau_1$, i.e., 
\be{varp}
\varphi_2(x)=\sum_{s\geq 2} {\bf E}_{\beta}\Big[\big(\sum_{i=1}^{s} (-1)^{i}\,  U_i \big)^2
\, \ind_{\{V_{s-1}>0,\, V_{s}=0\}}\, \ind_{\{\tau_1=s\}}  \, \ind_{\{X_1\leq x\}}\Big]:=\sum_{s\geq 2} \alpha_{s,x}
\ee
We define for $j\in \N$ and $V\in \{0\}\times \Z^{\N}$ the geometric area seen from the minimum of $V$ after $j$ steps, i.e.,
\be{areamin}
A_{\text{min},j}(V)= j+\sum_{i=1}^{j} |V_i-\min\{0,V_1,\dots, V_j\}|
\ee
and we define also 
\begin{align}\label{defWT}
\cW_{s,x}&:=\{ (U_i)_{i=1}^s\colon\, V_{s-1}>0, \tau_1=s, V_s=0,\,  X_1\leq x\}\\
\nonumber \cO_{s,x}&:=\{(U_i)_{i=1}^s\colon\, V_s=0,\, A_{\text{min},s}(V)\leq x\}
\end{align}
and we apply to $\cW_{s,x}$ the $s$ shifts $\theta^j$, $j\in \{0,\dots,s-1\}$ defined by 
$$\theta^j(u_1,\dots,u_s)=(u_{j+1},\dots,u_s,u_1,\dots,u_j)$$
The crucial point here is that for every $(u_1,\dots,u_s)\in \cW_{s,x}$ and every $0\leq j\leq s-1$, 
\begin{enumerate}
\item[(a)] $\theta^j(u_1,\dots,u_s)\in \cO_{s, x}$
\item[(b)] ${\bf P}_{\beta}((U_1,\dots,U_s)=(u_1,\dots,u_s))={\bf P}_{\beta}((U_1,\dots,U_s)=\theta^j(u_1,\dots,u_s)) $
\item[(c)] $A_{\text{min},s}(u_1,\dots,u_s)=A_{\text{min},s}(\theta^j(u_1,\dots,u_s))$
\item[(d)] If $s\in 2\N$ for $(v_1,\dots,v_s)=\theta^j(u_1,\dots,u_s)$ we have $\big(\sum_{i=1}^{s} (-1)^{i}\,  v_i \big)^2=
\big(\sum_{i=1}^{s} (-1)^{i}\,  u_i \big)^2$
\item[(e)] For every   $(v_1,\dots,v_s)\in \cup_{j=0}^{s-1} \theta^j(\cW_{s,x})$ there exists a unique $0\leq j\leq s-1$ and a unique \\
$(u_1,\dots,u_s)\in \cW_{s,x}$ such that $(v_1,\dots,v_s)=\theta^j(u_1,\dots,u_s)$.
\end{enumerate}
As a consequence, for every $s\in 2 \N$ and $x\in \N$ we have the upper bound 
\be{thu}
\alpha_{s,x}\leq  \frac{1}{s} {\bf E}_{\beta}\bigg[\Big(\sum_{i=1}^{s} (-1)^{i}\,  U_i \Big)^2
\, \ind_{\{V_{s}=0\}} \, \ind_{\{A_{\text{min},s}(V)\leq x\}}\bigg],
\ee
and moreover, one can show that there exists a $c_3>0$ such that  for $s\in 2\N+1$ and $x\in \N$ we have 
\be{updj}
\alpha_{s,x}\leq c_3 \, \alpha_{s+1,x+1},
\ee
since it suffices to add one increment equal to $0$ in front of a trajectory from $\cW_{s,x}$  to obtain a trajectory from $\cW_{s+1,x+1}$.
We recall \eqref{varp} and, as a consequence of \eqref{thu} and \eqref{updj}, we can claim that 
\be{psi}
\varphi_2(x)\leq \psi(x)+c_3 \psi(x+1),\quad x\in \N,
\ee
 with
\be{varp2aux}
\psi(x):=\sum_{p\geq 1} \frac{1}{2p} {\bf E}_{\beta}\Big[\Big(\sum_{i=1}^{2p} (-1)^{i}\,  U_i \Big)^2
\, \ind_{\{V_{2p}=0\}} \, \ind_{\{A_{\text{min},2p}(V)\leq x\}}\Big]:= \sum_{p\geq 1} \frac{1}{2p} \gamma_{2p,x}.
\ee
We easily conclude that 
\begin{align}\label{timinv}
\nonumber \gamma_{2p,x}&\leq 2 {\bf E}_{\beta}\Big[\big(\sum_{i=1}^{p} (-1)^{i}\,  U_i \big)^2
\, \ind_{\{V_{2p}=0\}} \, \ind_{\{A_{\text{min},2p}(V)\leq x\}}\Big]+ 2 {\bf E}_{\beta}\Big[\big(\sum_{i=p+1}^{2p} (-1)^{i} \, U_i \big)^2
\, \ind_{\{V_{2p}=0\}} \, \ind_{\{A_{\text{min},2p}(V)\leq x\}}\Big]\\
\nonumber &=4 {\bf E}_{\beta}\Big[\big(\sum_{i=1}^{p} (-1)^{i} \, U_i \big)^2
\, \ind_{\{V_{2p}=0\}} \, \ind_{\{A_{\text{min},2p}(V)\leq x\}}\Big]\leq 4 {\bf E}_{\beta}\Big[\big(\sum_{i=1}^{p} (-1)^{i} \, U_i \big)^2
\, \ind_{\{V_{2p}=0\}} \, \ind_{\{A_{\text{min},p}(V)\leq x\}}\Big]\\
&:= 4\,  A_{p,x},
\end{align}
where the equality between the r.h.s. in  the first and in the second line is obtained by  time inversion. 
We also observe by applying Markov property at time $p$ that 
\be{urgs}
 A_{p,x}=\sum_{y\in \Z} {\bf E}_{\beta}\Big[\big(\sum_{i=1}^{p} (-1)^{i}\, U_i \big)^2
\ind_{\{V_{p}=y\}} \, \ind_{\{A_{\text{min},p}(V)\leq x\}}\Big] {\bf P_\beta}(V_p=y) 
\ee
and it remains to use a local central limit theorem in \cite[Theorem 3.5.2]{RD05} to claim that there exists a $c_5>0$ such that for every $y\in \Z$ we have that 
${\bf P}_\beta(V_p=y)\leq \frac{c_5}{\sqrt{p}}$. Finally 
\be{boundvp}
\psi(x)\leq c_5 \sum_{p\geq 1} \frac{1}{p^{3/2}}  {\bf E}_{\beta}\Big[\big(\sum_{i=1}^{p} (-1)^{i}\,  U_i \big)^2 
\, \ind_{\{A_{\text{min},p}(V)\leq x\}}\Big].
\ee
At this stage, we let $(\mathcal{G}_n)_{n\geq 0}$ be the natural filtration associated with $(U_i)_{i\in \N}$ and we  set 
\be{deftauti}
\widetilde \tau_x:=\inf \{j\geq 1\colon A_{\text{min},j}(V)\geq x\},
\ee
which is a stopping time with respect to  $(\mathcal{G}_n)_{n\geq 1}$.
For every $p\in \N$, the inequality  $A_{\text{min},p}(V)\leq x$ implies that 
$p\leq \tilde \tau_x$ and therefore 
\be{rgh}
{\bf E}_{\beta}\bigg[\Big(\sum_{i=1}^{p} (-1)^{i}\,  U_i \Big)^2 
\, \ind_{\{A_{\text{min},p}(V)\leq x\}}\bigg]\leq {\bf E}_{\beta}\bigg[\Big(\sum_{i=1}^{p\wedge \tilde\tau_x} (-1)^{i}\,  U_i \Big)^2 
\bigg].
\ee
Using that $\big[\big(\sum_{i=1}^{n} (-1)^{i}\,  U_i \big)^2-n  {\bf E}_{\beta}\big(U_1^2\big)\big]$ is a $(\mathcal{G}_n)_{n\geq 1}$ martingale, we can assert that for every $p\in \N$,
\be{rgh2}
 {\bf E}_{\beta}\bigg[\Big(\sum_{i=1}^{p\wedge \tilde\tau_x} (-1)^{i}\,  U_i \Big)^2\bigg]= {\bf E}_{\beta}\big( \tilde \tau_x\wedge p)\   {\bf E}_{\beta}\big(U_1^2\big).
\ee
Thus, (\ref{boundvp}--\ref{rgh2}) allow us to assert that there exists a $c_6>0$ and $c_7>0$  such that  
\begin{align}\label{varphib}
\psi(x)&\leq c_6 \sum_{p=1}^\infty \frac{1}{p^{3/2}}  {\bf E}_{\beta}\big( \tilde \tau_x\wedge p) =c_6 \sum_{p=1}^\infty \frac{1}{p^{3/2}}  {\bf E}_{\beta}\big( p \ind_{\{\tilde \tau_x\geq  p\}}+ \tilde \tau_x \ind_{\{\tilde \tau_x<p\}}\big)\\
\nonumber &=c_6  {\bf E}_{\beta}\bigg[ \sum_{p=1}^{\tilde\tau_x}\frac{1}{\sqrt{p}}+ \tilde \tau_x \sum_{p=\tilde\tau_x+1}^\infty \frac{1}{p^{3/2}}\bigg]\leq c_7\, 
 {\bf E}_{\beta}\Big(\sqrt{\tilde \tau_x}\Big)\leq c_7 \sqrt{{\bf E}_{\beta}\Big(\tilde \tau_x \Big)}.
\end{align}
so that finally \eqref{psi} and \eqref{varphib} yield that there exists $c_8>0$ such that for every $x\in \N$,
\be{boundphif}
\varphi_2(x)\leq c_8 \sqrt{{\bf E}_{\beta}\Big(\tilde \tau_{1+x} \Big)},\quad x\in \N.
\ee
At this stage, we combine \eqref{eqaborn}, \eqref{vaeub}, \eqref{ster2} with \eqref{boundphif} (at $x=L/k$) and we obtain that there exists $c_9>0$ such that 
$$\probmubeta{ \max_{n \leq cL^{1/3}} \big | Q_{n,\frac{L}{k}} \big| \ \geq \eta L^{1/3}}\leq 
c_9 \frac{  \sqrt{{\bf E}_{\beta}\Big(\tilde \tau_{2+\frac{L}{k}} \Big)}}{\eta^2 L^{1/3}}.$$
Thus, we complete the proof of \eqref{diffale3} with a straightforward application of  Lemma \ref{boundtauti} (proven in Step 5).

\medskip

We continue with the proof of \eqref{diffale4}. We apply Cauchy Schwartz to  \eqref{phi} and we recall \eqref{ster2} to conclude that  there exists  a $c_1>0$ such that 
\be{vp12}
\varphi_1(x)\leq {\bf E}_{\beta,\mu_\beta} \Big[R_1^2\, \ind_{\big\{X_1\leq x\big\}}\Big]^{1/2}\leq  c_1  \varphi_2(x)^{1/2}.
\ee
Then, we use \eqref{boundphif} and Lemma \ref{boundtauti} to conclude that there exists a $c_2>0$ such that 
\be{bvp}
\varphi_1(x)\leq c x^{1/6}, \quad x\in \N.
\ee
We recall 
$$J_{n,\frac{L}{k}}:= \varphi_1(L/k)\,  \sum_{r=2}^{n}  (-1)^{\tau_{r-1}}  \, 
\text{sign}(V_{\tau_{r-1}-1})$$
and therefore, \eqref{diffale4} will be proven once we show that 
\be{diffale42}
\bigg(\frac{1}{L^{1/6}}  \max_{n \leq cL^{1/3}}  \Big | \sum_{r=2}^{n}  (-1)^{\tau_{r-1}} \text{sign}(V_{\tau_{r-1}-1})\Big|\bigg)_{L\in \N}
\ee
is a tight sequence of random variables.

The idea to perform this proof consists in rewriting the sum in \eqref{diffale42} as a sum of i.i.d. centered random variable with a finite second moment. To that aim we set $r_0=0$ and for every $x\geq 0$ we define $r_{1+x}:=\min\{j\geq r_{x}+1 \colon\; V_{\tau_j}=0\}$. 
Then, for every $x\in \N_0$ we define $Y_x$ as
\begin{align}\label{defYX}
Y_x:&=\sum_{j=r_x+1}^{r_{x+1}} (-1)^{\tau_j} \,  \sign(V_{\tau_j -1})=(-1)^{\tau_{r_x}} \sum_{j=1}^{r_{x+1}-r_x} (-1)^{\tau_{j+r_x}-\tau_{r_x}} \,  \sign(V_{\tau_{j+r_x} -1}).
\end{align}
We have implicitly divided the $V$ trajectory into groups of excursions indexed by $x$. Except for the very first group ($x=1$) every other group begins with 
an excursion starting at $0$ and the sign of this first excursion is given by $\gep_{r_x+1}$. Then, the sign of the other excursions 
in the group are simply alternating so that the sign of the $(r_x+j)$-th excursion is
$$\sign(V_{\tau_{r_x+j}-1})=\gep_{r_x+1}\,  (-1)^{j-1}.$$
As a consequence, we may rewrite, for $x\geq 2$ 
\begin{align}\label{defYXbis}
Y_x=\gep_{r_x+1}\,  (-1)^{\tau_{r_x}} Z_x \quad \text{with}\quad Z_x:= \sum_{j=1}^{r_{x+1}-r_x} (-1)^{\tau_{j+r_x}-\tau_{r_x}+j-1}.
\end{align}
At this stage, we denote by $G_{\text{exc}}(x)$ the part of the $V$ trajectory (in modulus) made of the $r_{x+1}-r_{x}$ excursions
contained in the  group indexed by $x$, i.e.,
$$G_{\text{exc}}(x):=(|V_{\tau_{r_x}}|,|V_{\tau_{r_x}+1}|,\dots,|V_{\tau_{r_{x+1}}-1}|).$$
We easily observe that $(G_{\text{exc}}(x))_{x\geq 1}$ is i.i.d. We also observe that $Z_x$ is a function of $G_{\text{exc}}(x)$
only and that $r_{x+1}-r_{x}$ follows a geometric law with parameter $1-e^{-\beta/2}$ (that is ${\bf P}_\beta(V_1=0\, |\, V_1\geq 0)$). 
As a consequence, $(Z_x)_{x\geq 1}$ is an i.i.d. sequence of random variables with a finite second moment. We recall Remark \ref{remindep} which tells us that $(\gep_i)_{i\geq 0}$ is independent of 
$(G_{\text{exc}}(x))_{x\geq 0}$. Since for every $x\geq 0$ the random variable $(-1)^{\tau_{r_x}}$ is 
$\sigma(G_{\text{exc}}(j), j\geq 0)$ measurable and takes values $-1$ and $1$ only, the fact that $(\gep_i)_{i\geq 0}$ is an i.i.d. sequence 
of symmetric  Bernoulli trials  implies that 
$(\gep_{r_x+1}\,  (-1)^{\tau_{r_x}})_{x\geq 0}$ is also an i.i.d. sequence 
of symmetric Bernoulli trials independent of $\sigma(G_{\text{exc}}(j), j\geq 0)$. As a result,
$(Y_x)_{x\geq 1}$ is an i.i.d. sequence of centered random variables with a finite second moment. Thus, the tightness of the sequence of random variables in 
\eqref{diffale42} is a straightforward consequence of Donsker invariance principle.

\subsubsection*{Step 3: proof  of Claim \ref{c2}}


We set 
\be{compl}
B_{L,\eta}:=\Big\{\max_{j\leq cL^{1/3} \colon X_j\leq \frac{L}{k}} F_j < \eta L^{1/3}\Big\}=\bigcap_{j=1}^{cL^{1/3}} \big\{X_j >\tfrac{L}{k}\big\}\cup \Big\{ F_j<\eta L^{1/3},\,  X_j\leq \tfrac{L}{k}\Big\},
\ee
and we use \eqref{deftg} to recall that the sequence $(F_j)_{j\in \N}$ is i.i.d. so that 
\be{majoB}
{\bf P}_{\beta,\mu_\beta}(B_{L,\eta})=e^{cL^{1/3} \, \log \big(1- {\bf P}_{\beta,\mu_\beta}\big(F_1\geq \eta L^{1/3},\   X_1\leq \tfrac{L}{k}\big)\big)}.
\ee
Thus, \eqref{majoB} guarantees that the proof of Claim \ref{c2} will be complete once we show that for every $\eta>0$,
\be{limsupaux}
\lim_{k\to \infty} \limsup_{L\to \infty}\  L^{1/3}\   {\bf P}_{\beta,\mu_\beta}\Big(F_1\geq \eta L^{1/3},\   X_1\leq \tfrac{L}{k}\Big)=0
\ee
By using the mapping of trajectories introduced in (\ref{ster}--\ref{ster2}) we again substitute the law ${\bf P}_\beta$ to ${\bf P}_{\beta,\mu_\beta}$ 
in the r.h.s. of \eqref{limsupaux}. We indeed obtain that there exists a $c>0$ such that 
\be{boundbtoub}
 {\bf P}_{\beta,\mu_\beta}\Big(F_1\geq \eta L^{1/3},\   X_1\leq \tfrac{L}{k}\Big)\leq c\,   {\bf P}_{\beta}\big(C_{L,\eta}\big)
\ee
where
\be{CLeta}
C_{L,\eta}:=\Big\{F_1\geq \eta L^{1/3},\,  V_{\tau_1-1}>0, \, V_{\tau_1}=0,\, X_1\leq 1+\tfrac{L}{k}\Big\}
\ee
and with an alternative description of $F_1$, i.e., 
\be{altG1}
F_1:=\frac{1}{2} \max_{i\in \{1,\dots,\tau_1\}} \big| \sum_{s=1}^{i} (-1)^{s-1} \, U_s\big|.
\ee
We slightly modify the notations in \eqref{defWT}, i.e.,  for $x\in \N$,
\begin{align}\label{defWT2}
\tilde \cW_{s,x}&:=\{ (U_i)_{i=1}^s\colon\, V_{s-1}>0, \tau_1=s, V_s=0,\,  X_1\leq x, \, F_1\geq \eta L^{1/3}\}\\
\nonumber \tilde \cO_{s,x}&:=\{(U_i)_{i=1}^s\colon\, V_s=0,\, A_{\text{min},s}(V)\leq x, \, F_1\geq \tfrac{\eta}{3} L^{1/3}\}
\end{align}
 and we note that 
\be{tyu2}
C_{L,\eta}:=\cup_{s \geq 2 } \widetilde \cW_{s,\tfrac{L}{k}}.
\ee
We apply to $\cW_{s,x}$ the $s-1$ shifts $\theta^j$, $j\in \{0,\dots,s-1\}$ defined by 
$$\theta^j(u_1,\dots,u_s)=(u_{j+1},\dots,u_s,u_1,\dots,u_j)$$
The crucial point here is that for every $(u_1,\dots,u_s)\in \tilde \cW_{s,x}$ and every $0\leq j\leq s-1$, the properties
(a--c) and (e) stated below \eqref{defWT} are still satisfied here with $\tilde \cW_{s,x}$ and $\tilde \cO_{s,x}$ instead of $\cW_{s,x}$ and  $\cO_{s,x}$
whereas the (d) property  is replaced by 
\be{comparai}
\max_{j\in \{1,\dots,s\}} \big| \sum_{i=1}^{j} (-1)^{i-1} v_i \big|\geq \frac{1}{3}\max_{j\in \{1,\dots,s\}} \big| \sum_{i=1}^{j} (-1)^{i-1} u_i \big|,
 \ee
 with $(v_1,\dots,v_s)=\theta^j(u_1,\dots,u_s)$.
As a consequence, we obtain the following upper bound, 
\be{majoC}
{\bf P}_{\beta}(C_{L,\eta})\leq \sum_{s\geq 2} \frac{1}{s} {\bf P}_{\beta} \Big(F_1>\frac{\eta L^{1/3}}{3},\, V_s=0,\  A_{\text{min},s}\leq \frac{L}{k}\Big).
\ee 
At this stage, we consider a sequence of $s$ increments $(U_i)_{i=1}^s$ such that the associated $V$ trajectory satisfies $V_s=0$ and
$A_{\text{min},s}(V)\leq \frac{L}{k}$. Then, the $\tilde V$ trajectory defined by $\tilde V_i=V_{s-i}$ for $i\in \{0,\dots,s\}$ has increments 
$(-U_{s+1-i})_{i=1}^s$ also satisfies $\tilde V_s=0$ and   $A_{\text{min},s}(\tilde V)\leq \frac{L}{k}$. We can use this auxiliary trajectory and check easily that
$F_1\leq \max\{F_{1,1},F_{1,2}\}$ where 
\begin{align}\label{supG}
F_{1,1}&:=\max_{j\in \big\{1,\dots,\lfloor\frac{s}{2}\rfloor +1\big\}} \Big| \sum_{i=1}^j (-1)^{i-1} U_i\Big| \quad \text{and} \quad F_{1,2}:=\max_{j\in \big\{\lfloor \frac{s}{2}\rfloor+1,\dots,s\big\}} \Big| \sum_{i=1}^j (-1)^{i-1} U_i\Big|.
\end{align}
Moreover, 
\begin{align}\label{majoG2} 
\nonumber F_{1,2}&\leq \max_{j\in \big\{\lfloor \frac{s}{2}\rfloor +1,\dots,s\big\}} \Big| \sum_{i=1}^s (-1)^{i-1} U_i+ (-1)^{s+1} \sum_{i=1}^{s-j} (-1)^{i-1} (-U_{s+1-i}) \Big|\\
&\leq  \Big| \sum_{i=1}^s (-1)^{i-1} U_i\Big|+ \max_{j\in \big\{1,\dots,\lfloor \frac{s}{2}\rfloor +1\big\}} \Big|\sum_{i=1}^{j-1} (-1)^{i-1} (-U_{s+1-i}) \Big|:=\Big| \sum_{i=1}^s (-1)^{i-1} U_i\Big|+\tilde F_{1,1},
\end{align}
and a straightforward computations gives us that $| \sum_{i=1}^s (-1)^{i-1} U_i |\leq F_{1,1}+\tilde F_{1,1}$. Thus, 
\be{domG}
F_1\leq 3\max\{F_{1,1},\tilde F_{1,1}\}
\ee
and we note that, conditioned on $\tilde V_s=0$ and   $A_{\text{min},s}(\tilde V)\leq \frac{L}{k}$, the two random variables $F_{1,1}$ and $\tilde F_{1,1}$
have the same law. As a consequence, $F_1$ can be replaced by $F_{1,1}$ in the r.h.s of \eqref{majoC} and the proof will be complete once we prove that 
for every $\eta>0$,  
\be{lastthingto}
\lim_{k\to \infty} \limsup_{L\to \infty}\  L^{1/3}\  \sum_{s\geq 2} \frac{1}{s} \alpha_{\eta,\,s,L,k} =0
\ee
where 
$$\alpha_{\eta,\, s,L,k}:={\bf P}_{\beta} \Big(F_{1,1}>\eta L^{1/3},\, V_s=0,\  A_{\text{min},s}\leq \frac{L}{k}\Big).$$
By Markov inequality applied at time $t_s:=\lfloor \frac{s}{2}\rfloor+1$ we can write that for every $s\geq 2$, 
\be{supdeG}
\alpha_{\eta,\,s,L,k}\leq \sum_{y\in \Z} {\bf P}_{\beta} \Big(F_{1,1}>\eta L^{1/3},\, V_{t_s}=y,\  A_{\text{min},t_s}\leq \frac{L}{k}\Big)  {\bf P_\beta}(V_{s-t_s}=y) 
\ee
so that it  remains to use the local central limit Theorem in \cite[Theorem 3.5.2]{RD05} to claim that there exists a $c>0$ such that for every $y\in \Z$ we have that 
${\bf P}_\beta(V_{t_s}=y)\leq \frac{c}{\sqrt{s}}$ and to sum over $y$ to obtain 
\begin{align}\label{supdeG2}
\nonumber \sum_{s\geq 2} \frac{1}{s} \alpha_{\eta,s,L}&\leq c \sum_{s\geq 2}  \frac{1}{s^{3/2}}   {\bf P}_{\beta} \Big(F_{1,1}>\eta L^{1/3},\    A_{\text{min},t_s}\leq \frac{L}{k}\Big)\\
&\leq  2 c \sum_{p\geq 1}  \frac{1}{p^{3/2}}   {\bf P}_{\beta} \Big(\max_{j\in \{1,\dots, p+1\}} \Big| \sum_{i=1}^j (-1)^{i-1} U_i\Big| >\eta L^{1/3},\    A_{\text{min},p+1}\leq \frac{L}{k}\Big)\\
&\leq   c_2 \sum_{p\geq 1}  \frac{1}{p^{3/2}}   {\bf P}_{\beta} \Big(\max_{j\in \{1,\dots, p\}} \Big| \sum_{i=1}^j (-1)^{i-1} U_i\Big| >\eta L^{1/3},\    A_{\text{min},p}\leq \frac{L}{k}\Big)
\end{align}
where the second inequality in \eqref{supdeG2} is obtained by noting that $t_{2p}=t_{2p+1}$ for every $p\geq 1$. At this stage, we recall the definition of 
$\tilde\tau$ in \eqref{deftauti} and we recall also that 
for every $p\in \N$, the inequality  $A_{\text{min},p}(V)\leq \frac{L}{k}$ implies that 
$p\leq \tilde \tau_{L/k}$ and therefore 
\be{insidesub}
\Big\{\max_{j\in \{1,\dots, p\}} \Big| \sum_{i=1}^j (-1)^{i-1} U_i\Big| >\eta L^{1/3},\    A_{\text{min},p}\leq \frac{L}{k}\Big\}\subset
\Big\{\max_{j\in \{1,\dots, p\}} \Big| \sum_{i=1}^{j\wedge \tilde\tau_{L/k}} (-1)^{i-1} U_i\Big| >\eta L^{1/3}\Big\}.
\ee
Moreover $\tilde \tau_{L/k}$ is a stopping time and  $\big(\sum_{i=1}^{n} (-1)^{i-1} U_i \big)_{n\geq 1}$ is a martingale so that by Doob inequality
we can claim that 
\be{probinside}
{\bf P}_\beta\Big(\max_{j\in \{1,\dots, p\}} \Big| \sum_{i=1}^{j\wedge \tilde\tau_{L/k}} (-1)^{i-1} U_i\Big| >\eta L^{1/3} \Big)\leq \frac{{\bf E}_\beta\Big( \Big| \sum_{i=1}^{p\wedge \tilde\tau_{L/k}} (-1)^{i-1} U_i\Big|^2  \Big)}{\eta^2 L^{2/3}}= \frac{{\bf E}_{\beta}\big( \tilde \tau_{L/k}\wedge p)\   {\bf E}_{\beta}\big(U_1^2\big)}{\eta^2 L^{2/3}}
\ee
where we have used that $\big[\sum_{i=1}^{n} (-1)^{i-1} U_i \big)^2-n  {\bf E}_{\beta}\big(U_1^2\big)\big]_{n\geq 1}$  is a martingale.
At this stage, \eqref{lastthingto} becomes 
\be{verylast}
\lim_{k\to \infty} \limsup_{L\to \infty}\  L^{1/3}\  \sum_{p\geq 1} \frac{1}{p^{3/2}} \frac{{\bf E}_{\beta}\big( \tilde \tau_{L/k}\wedge p)\   {\bf E}_{\beta}\big(U_1^2\big)}{\eta^2 L^{2/3}}=0,
\ee
so that, by mimicking \eqref{varphib}, it remains to prove that 
\be{veryverylast}
\lim_{k\to \infty} \limsup_{L\to \infty}\  L^{1/3}\  \sqrt{ \tilde \tau_{L/k}}=0,
\ee 
which is a straightforward consequence of Lemma \ref{boundtauti} proven in Step 5 below.

\subsubsection*{Step 4: Lemma \ref{boundtauti}}
In this section, we state and prove a lemma that allows us to control the growth of $\widetilde \tau_x$ as $x\to \infty$.

\begin{lemma}\label{boundtauti}
For every $\beta>0$, there exists a $c>0$ such that 
${\bf E}_\beta(\tilde \tau_x)\leq c x^{2/3}$ for every $x\in \N$.
\end{lemma}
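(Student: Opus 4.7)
The plan is to combine a Donsker-type estimate with a block iteration. Since each summand in $A_{\text{min},j}(V)=j+\sum_{i=1}^{j}|V_i-\min\{0,V_1,\dots,V_j\}|$ is non-negative, one has $A_{\text{min},j}(V)\ge j$ and therefore $\widetilde\tau_x\le \lceil x\rceil$ deterministically. In particular the bound $\bE_\beta(\widetilde\tau_x)\le c x^{2/3}$ is trivial for $x$ in any bounded range, and we may assume $x$ is large. Set $T_x:=\lfloor c_0 x^{2/3}\rfloor$ for a constant $c_0>0$ to be chosen.

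The first step is to show that $\liminf_{x\to\infty}\bP_\beta\bigl(A_{\text{min},T_x}(V)\ge x\bigr)\ge \delta$ for some $\delta>0$, provided $c_0$ is large enough. Since the increments are symmetric Laplace with finite variance $\sigma_\beta^2$, Donsker's invariance principle gives
\[
\Bigl(\tfrac{V_{\lfloor t x^{2/3}\rfloor}}{x^{1/3}}\Bigr)_{t\in[0,c_0]}\ \xrightarrow[x\to\infty]{d}\ \bigl(\sigma_\beta B_t\bigr)_{t\in[0,c_0]}
\]
on $D([0,c_0])$, with $B$ a standard Brownian motion. The functional $\omega\mapsto\int_0^{c_0}\bigl(\omega(t)-\min_{0\le s\le c_0}\omega(s)\bigr)dt$ is Skorokhod-continuous at continuous paths, so the continuous mapping theorem together with the negligibility of $T_x/x=O(x^{-1/3})$ yields
\[
\frac{A_{\text{min},T_x}(V)}{x}\ \xrightarrow[x\to\infty]{d}\ \sigma_\beta\int_0^{c_0}\bigl(B_t-\min_{0\le s\le c_0}B_s\bigr)dt.
\]
By Brownian scaling this limit has the same law as $\sigma_\beta c_0^{3/2}I$, with $I:=\int_0^1\bigl(B_t-\min_{0\le s\le 1}B_s\bigr)dt$ a.s.\ finite and strictly positive. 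Choosing $c_0$ large enough that $\bP(\sigma_\beta c_0^{3/2}I\ge 1)\ge 2\delta$, and noting the limit law has no atom at $1$, the claimed pre-limit lower bound holds for all sufficiently large $x$.

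The second step is a block iteration. For $N\in\N$, partition $[0,NT_x]$ into $N$ consecutive blocks of length $T_x$ and introduce, for $0\le k\le N-1$, the block area seen from the block minimum
\[
A^{(k)}:=T_x+\sum_{i=kT_x+1}^{(k+1)T_x}\Bigl(V_i-\min_{kT_x\le j\le (k+1)T_x}V_j\Bigr).
\]
Since the global minimum $V^*_{NT_x}:=\min\{0,V_1,\dots,V_{NT_x}\}$ is bounded above by each block minimum, one has $A_{\text{min},NT_x}(V)\ge\sum_{k=0}^{N-1}A^{(k)}$. A shift of indices shows that each $A^{(k)}$ is distributed as $A_{\text{min},T_x}(V)$, and the variables $(A^{(k)})_{k=0}^{N-1}$ are independent because they depend on disjoint blocks of increments. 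Combined with the first step,
\[
\bP_\beta(\widetilde\tau_x>NT_x)\ \le\ \bP_\beta\Bigl(\bigcap_{k=0}^{N-1}\{A^{(k)}<x\}\Bigr)\ \le\ (1-\delta)^N
\]
for $x$ large enough, whence $\bE_\beta[\widetilde\tau_x]\le T_x\sum_{N\ge 0}(1-\delta)^N=T_x/\delta\le c\,x^{2/3}$.

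I do not expect any serious obstacle: the exponent $2/3$ merely reflects the correct diffusive dimensioning (space $\sim x^{1/3}$, time $\sim x^{2/3}$, area $\sim x$). The only mildly delicate point is the passage from distributional convergence to a uniform-in-$x$ lower bound, which is routine by atomlessness of the limit law.
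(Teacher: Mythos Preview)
Your argument is correct and considerably more direct than the paper's. The key observations---that the block functionals $A^{(k)}$ are i.i.d.\ copies of $A_{\text{min},T_x}(V)$ (since $V_i-m_k$ depends only on the increments $U_{kT_x+1},\dots,U_{(k+1)T_x}$), that their sum is dominated by $A_{\text{min},NT_x}(V)$ (because the global minimum lies below each block minimum), and that Donsker plus the continuous mapping theorem give a uniform-in-$x$ lower bound $\bP_\beta(A_{\text{min},T_x}(V)\ge x)\ge\delta$ for large $x$---are all sound. The geometric tail $\bP_\beta(\widetilde\tau_x>NT_x)\le(1-\delta)^N$ then follows, yielding $\bE_\beta[\widetilde\tau_x]\le T_x/\delta$.

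The paper proceeds very differently. It introduces a ``pseudo-excursion'' decomposition of $V$ via stopping times $\eta_i,\tilde\eta_i$, builds an auxiliary functional $R_n\le A_{\text{min},n}$ whose hitting time $\widehat\tau_x\ge\widetilde\tau_x$, and then bounds $\bE_\beta[\widehat\tau_x]$ by splitting it into contributions from completed pseudo-excursions plus an overshoot term. This requires three separate technical estimates (Claim~\ref{control}): a conditional bound on $\eta_1/\tilde X_1^{2/3}$, a bound on the overshoot, and a bound on $\sum_{j\le a_x}\tilde X_j^{2/3}/x^{2/3}$, each proved by fairly intricate path manipulations (cycle-lemma-type shifts, reflections, local limit theorems). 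Your approach sidesteps all of this: you only need Donsker's theorem and independence of increments. The paper's route extracts more structural information about the excursion decomposition, but none of that extra structure appears to be used elsewhere, so for the purpose of this lemma your argument is simply more economical. One cosmetic point: you do not actually need atomlessness of the limit law; Portmanteau for the open set $\{\,\cdot\,>1\}$ already gives $\liminf_x\bP_\beta(A_{\text{min},T_x}(V)>x)\ge\bP(\sigma_\beta c_0^{3/2}I>1)$.
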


To prove the lemma we need to divide every $V$ trajectory into pseudo-excursions. To that aim, we define two sequences of random times, i.e.,  
$\eta_0=0$ and  for  every $i\in \N$,
\begin{align}\label{hattau}
\tilde \eta_i:=&\inf\{j\geq\eta_{i}\colon\, V_{j+1}> V_{j}\},\\
\nonumber \eta_{i}:=&\inf\{j>\tilde \eta_{i}\colon\, V_{j}\leq  V_{\tilde \eta_i}\}.
 \end{align}
 The pseudo-excursion indexed by $j\in \N$ is given by $(V_{\eta_{j-1}+1}, V_{\eta_{j-1}+2},\dots, V_{\eta_j})$ and we 
 associate it with the quantity 
\be{deftiX}
\tilde X_j=\eta_j-\eta_{j-1}+\sum_{i=\tilde \eta_j+1}^{\eta_j -1} V_i-V_{\tilde \eta_j}.
\ee
 At this stage, we observe that every pseudo-excursion starts with a non-increasing part of length $\tilde \eta$
followed by a real positive excursion (seen from $V_{\tilde \eta}$) of length $\eta-\tilde \eta$. The quantity 
$\tilde X$ corresponds to the total length of the pseudo-excursion plus the area swept by 
its real excursion. Henceforth, we will abusively call it the area of the pseudo-excursion.

For $n\in \N$, we denote by $m_n$ the number of pseudo-excursions that have been completed before time $n$ and by 
$a_n$ the number of pseudo-excursions that have been completed before their cumulated area reaches $n$, i.e., 
 \begin{align}\label{deftii}
 m_n&:=\max\{j\geq 0\colon\, \eta_j \leq n\},\\
 a_n&:=\max\{j\geq 0\colon \tilde X_1+\dots+\tilde X_j\leq n\}.
 \end{align}
At this stage, we define an increasing functional of the trajectory, i.e., 
$$R_n=\tilde X_1+\dots+\tilde X_{m_n}+(n-\eta_{m_n})+\sum_{i=\tilde \eta_{m_n +1}}^{n} V_i-V_{\tilde \eta_{m_n}}.$$
It is easy to see that $R_n$ is bounded above by $A_{\text{min},n}$ for every $n\in \N$ and every trajectory $V$ such that $V_0=0$. 
Therefore, by recalling the definition of $\widetilde \tau_x$ in \eqref{deftauti} and by defining 
\be{deftautihat}
\widehat \tau_x:=\inf \{j\geq 1\colon R_j(V)\geq x\},
\ee
we can claim that $\widehat \tau_x\geq \widetilde \tau_x$ for every $x\in \N$ and every $V$. For this reason, the proof of lemma \ref{boundtauti}
will be complete once we show that  there exists a $c>0$ such that 
\be{target}
{\bf E}_\beta(\widehat \tau_x)\leq c x^{2/3}, \quad  x\in \N. 
\ee 
To prove \eqref{target}, we use the equality 
$\hat \tau_x=\sum_{j=1}^{a_x} \eta_j-\eta_{j-1}+\hat \tau_x-\eta_{a_x}$ to rewrite the l.h.s. in \eqref{target} under the form  
\be{target2}
\frac{1}{x^{2/3}} {\bf E}_\beta(\widehat \tau_x)= {\bf E}_\beta\Bigg[\sum_{j=1}^{a_x}\frac{\eta_j-\eta_{j-1}}{\tilde X_j^{\, 2/3}}\,  \frac{\tilde X_j^{\, 2/3}}{x^{2/3}}\Bigg]
+\frac{1}{x^{2/3}} {\bf E}_\beta\big[\widehat \tau_x-\eta_{a_x}\big].
\ee 
The following claim shed lights on the fact that pseudo-excursions are almost i.i.d.
 \begin{claim}\label{subproc}
 Under ${\bf P}_\beta$ the pseudo-excursions, i.e., 
 $\big\{(V_{\eta_{j-1}+i}-V_{\eta_{j-1}})_{i=0}^{\eta_{j}-\eta_{j-1}}\colon\, j\geq 1 \big\} $ are i.i.d. Moreover, for every $j\geq 1$ the sequences 
$(V_{\eta_{j-1}+i}-V_{\eta_{j-1}})_{i=0}^{\tilde \eta_{j}-\eta_{j-1}}$ and $(V_{\tilde\eta_{j}+i}-V_{\tilde \eta_{j}})_{i=0}^{\eta_{j}-\tilde \eta_{j}}$ are independent as well. 
 \end{claim}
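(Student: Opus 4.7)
Both statements are consequences of the strong Markov property applied at carefully chosen stopping times, combined with the fact that conditioning on the sign of an i.i.d. Laplace increment yields an independent realization. The first step is to verify that each $\eta_j$ is a stopping time for the natural filtration $\mathcal{F}_n = \sigma(U_1,\dots,U_n)$: the event $\{\eta_j\leq n\}$ forces $\tilde\eta_j\leq n-1$, and for any $m\leq n-1$ the event $\{\tilde\eta_j = m\}=\{U_{\eta_{j-1}+1}\leq 0,\dots,U_m\leq 0,U_{m+1}>0\}$ lies in $\mathcal{F}_{m+1}\subset\mathcal{F}_n$; once $\tilde\eta_j$ is recovered, the first return of $V$ to level $V_{\tilde\eta_j}$ on $[\tilde\eta_j+1,n]$ can be read off from $\mathcal{F}_n$. (Note that $\tilde\eta_j$ itself is only $\mathcal{F}_{\tilde\eta_j+1}$-measurable, but this will not matter below.)

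For the i.i.d.\ statement I would proceed by induction on $j$ using strong Markov at $\eta_{j-1}$. Since $\eta_{j-1}$ is a stopping time, the post-$\eta_{j-1}$ walk $(V_{\eta_{j-1}+i}-V_{\eta_{j-1}})_{i\geq 0}$ is a random walk with increments i.i.d.\ $\sim\mathcal{L}(U_1)$, independent of $\mathcal{F}_{\eta_{j-1}}$; since the $j$-th pseudo-excursion $(V_{\eta_{j-1}+i}-V_{\eta_{j-1}})_{i=0}^{\eta_j-\eta_{j-1}}$ is a deterministic functional of this post-$\eta_{j-1}$ walk (the same functional that produces the first pseudo-excursion from $(V_i)_{i\geq 0}$), it follows that the $j$-th pseudo-excursion is independent of $\mathcal{F}_{\eta_{j-1}}$ and distributed as the first one. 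Iterating gives the i.i.d.\ property.

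For the decomposition inside a single pseudo-excursion, by the previous paragraph it suffices to treat $j=1$ (so $\eta_0=0$). The key computation is the disintegration with respect to $\tilde\eta_1$: for every $k\geq 0$, any Borel sets $A_1,\dots,A_k\subset (-\infty,0]$ and $A_{k+1}\subset(0,\infty)$, independence of the $U_i$'s gives
\begin{equation*}
\mathbf{P}_\beta\bigl(\tilde\eta_1=k,\, U_1\in A_1,\dots,U_{k+1}\in A_{k+1}\bigr)=\prod_{i=1}^{k+1}\mathbf{P}_\beta(U_1\in A_i).
\end{equation*}
Hence, conditionally on $\{\tilde\eta_1=k\}$, the variables $U_1,\dots,U_k$ are i.i.d.\ $\sim\mathcal{L}(U_1\mid U_1\leq 0)$, $U_{k+1}\sim\mathcal{L}(U_1\mid U_1>0)$, and $(U_{k+j})_{j\geq 2}$ are i.i.d.\ $\sim\mathcal{L}(U_1)$, all three blocks being conditionally independent. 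The non-increasing part $(V_i)_{i=0}^{\tilde\eta_1}$ is a function of the first block alone, while the positive excursion $(V_{\tilde\eta_1+i}-V_{\tilde\eta_1})_{i=0}^{\eta_1-\tilde\eta_1}$ is a function of $(U_{k+1},U_{k+2},\dots)$ stopped at the first return of the partial sums to $(-\infty,0]$, hence a function of the last two blocks. Conditional independence gives independence of these two parts given $\tilde\eta_1$; crucially, the conditional law of the positive excursion does not depend on $k$, so the positive excursion is actually independent of $\tilde\eta_1$ itself, and therefore of the whole non-increasing part.

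No step looks genuinely difficult; the only place where one must be careful is the verification that $\eta_j$ (rather than $\tilde\eta_j$) is the relevant stopping time, since the definition of $\tilde\eta_j$ peeks one step into the future. This is precisely why the disintegration above is stated in terms of $\tilde\eta_1$ rather than invoking the strong Markov property at $\tilde\eta_1$ directly.
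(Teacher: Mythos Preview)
Your proof is correct and follows the same strategy as the paper's one-line proof, which simply invokes the strong Markov property at the stopping times $\eta_j$ and $1+\tilde\eta_j$. Your explicit disintegration with respect to $\tilde\eta_1$ unpacks precisely what the paper leaves implicit when citing strong Markov at $1+\tilde\eta_j$: that the conditional law of $U_{\tilde\eta_1+1}$ (and hence of the whole positive excursion) given $\tilde\eta_1=k$ does not depend on $k$.
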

 \smallskip
 \begin{proof}
 The proof of the claim is a straightforward consequence of strong  Markov property combined with the fact that 
 $1+\tilde \eta_j$ and  $\eta_j$ are stopping times for every $j\geq 1$.
 \end{proof}

Because of Claim \ref{subproc} above,  $(\eta_j-\eta_{j-1})_{j\geq 1}$ and  $(\tilde X_j)_{j\geq 1}$ are sequences of i.i.d. random variables since, for every $j\geq 1$, both 
$\eta_j-\eta_{j-1}$ and $\tilde X_j$ are functions the $j$-th pseudo excursion only. Thus, by taking the conditional expectation with respect to $\{a_x,\tilde X_1,\dots,\tilde X_{a_x}\}$ inside both terms 
in the r.h.s. of \eqref{target2} we obtain that 
\begin{align}\label{target3}
\frac{1}{x^{2/3}} {\bf E}_\beta(\widehat \tau_x)&= {\bf E}_\beta\Bigg[\sum_{j=1}^{a_x}\,  {\bf E}_\beta \bigg(\frac{\eta_j-\eta_{j-1}}{\tilde X_j^{\, 2/3}}\ \Big| \ \tilde X_j\bigg) \ \frac{\tilde X_j^{\, 2/3}}{x^{2/3}}\Bigg]
+\frac{1}{x^{2/3}} {\bf E}_\beta\big[ {\bf E}_\beta\big (\widehat \tau_x-\eta_{a_x}\, |\,  R_{a_x}\big)\big]\\
\nonumber &= {\bf E}_\beta\Bigg[\sum_{j=1}^{a_x}\,  {\bf E}_\beta \bigg(\frac{\eta_1}{\tilde X_1^{\, 2/3}}\ \Big| \ \tilde X_1\bigg) \          \frac{\tilde X_j^{\, 2/3}}{x^{2/3}}\bigg]
+\frac{1}{x^{2/3}} {\bf E}_\beta\Big[ {\bf E}_\beta\big( \widehat \tau_ {x-R_{a_x}}\,  | \, \tilde X_1>x-R_{a_x}\big)\Big].
\end{align}
Thus, the proof of \eqref{target} is a consequence of the 3 inequalities displayed in Claims \ref{control},  that are also proven 
below. 
 
\begin{claim}\label{control}
For every  $\beta>0$ there exists a $C>0$ such that 
\be{up3}
 {\bf E}_\beta \bigg[\frac{\eta_1}{\tilde X_1^{\, 2/3}}\ \Big| \ \tilde X_1=x\bigg]\leq C,\quad x\in \N,
 \ee
  \be{up31}
 {\bf E}_\beta\Big[\frac{ \widehat \tau_ {x}}{x^{2/3}}\,  | \, \tilde X_1>x\Big]\leq C, \quad x\in \N,
 \ee
 \be{up32}
 {\bf E}_\beta\Bigg[\sum_{j=1}^{a_x}\,        \frac{\tilde X_j^{\, 2/3}}{x^{2/3}}\bigg]\leq C, \quad x\in \N.
 \ee
\end{claim}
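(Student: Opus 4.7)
The plan is to exploit the decomposition of each pseudo-excursion into a non-increasing prefix of length $\tilde\eta_j-\eta_{j-1}$ with geometric distribution (parameter $\mathbf{P}_\beta(U_1>0)\in(0,1)$) and, by Claim \ref{subproc}, independent of the real positive excursion that follows. Setting $X^{\mathrm{exc}}_j:=(\eta_j-\tilde\eta_j-1)+\sum_{i=\tilde\eta_j+1}^{\eta_j-1}(V_i-V_{\tilde\eta_j})$, the ``length plus area'' of the real excursion, the decomposition reads $\tilde X_j=X^{\mathrm{exc}}_j+(\tilde\eta_j-\eta_{j-1})+1$; since $X^{\mathrm{exc}}_j$ plays the role of the variable $X_1$ of Section \ref{mren}, the tail estimate \eqref{tailest} yields $\mathbf{P}_\beta(X^{\mathrm{exc}}_1=n)\asymp n^{-4/3}$, and the same asymptotics transfers to $\tilde X_1$ because the prefix has exponential tail. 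On top of this I use: (b) $\mathbf{E}_\beta[\eta_1-\tilde\eta_1\mid X^{\mathrm{exc}}_1=m]\le Cm^{2/3}$, obtained from the tightness of $\tau/\ell^{2/3}$ stated in \cite[Proposition~3.4]{CarPet17b}; and (c) the scaling result \cite[Theorem~A]{CarPet17b}, which says that a positive excursion conditioned on $X^{\mathrm{exc}}=m$, rescaled by $\tau$ in time and $\sqrt{\tau}\sim m^{1/3}$ in space, converges to a Brownian excursion normalized by its area, so that its first-passage time to a fraction $\alpha$ of its total geometric area is of rescaled order $\alpha^{2/3}$, hence of actual order $\alpha^{2/3}\tau\asymp(\alpha m)^{2/3}$.

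For \eqref{up3}, write $\eta_1=\tilde\eta_1+\ell_1$. Bayes' rule combined with the tail estimates gives $\mathbf{P}_\beta(\tilde\eta_1=k\mid\tilde X_1=x)\le C(1-p)^k$ for $k\le x/2$, while for $k>x/2$ the geometric decay alone suffices; hence $\mathbf{E}_\beta[\tilde\eta_1\mid\tilde X_1=x]\le C$. Conditionally on $\{\tilde\eta_1=k,\tilde X_1=x\}$ the real excursion has $X^{\mathrm{exc}}_1=x-k-1$, and (b) yields $\mathbf{E}_\beta[\ell_1\mid\tilde X_1=x,\tilde\eta_1=k]\le C(x-k)^{2/3}\le Cx^{2/3}$. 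Summing over $k$ gives \eqref{up3}.

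For \eqref{up31}, I first observe that on $\{\tilde X_1>x\}$ we have $R_{\eta_1}=\tilde X_1>x$, hence $\hat\tau_x\le\eta_1$ and the first passage takes place during the first pseudo-excursion. I then show that $\mathbf{E}_\beta[\hat\tau_x\mid\tilde X_1=y]\le Cx^{2/3}$ uniformly in $y\ge x$. The contribution of $\{\tilde\eta_1>x^{2/3}\}$ is negligible because the conditioning $\tilde X_1>x$ does not perturb the geometric tail of $\tilde\eta_1$ (same Bayes argument as above). On $\{\tilde\eta_1\le x^{2/3}\}$, $\hat\tau_x-\tilde\eta_1$ is the first time after $\tilde\eta_1$ at which the area accumulated from the base $V_{\tilde\eta_1}$ plus the elapsed time exceeds $x-\tilde\eta_1$; applying (c) to the excursion conditioned on $X^{\mathrm{exc}}_1=y-\tilde\eta_1-1$ (which is at least $x/2$ for $x$ large and $y\ge x$), the rescaled first-passage time to area fraction $(x-\tilde\eta_1)/(y-\tilde\eta_1-1)\le 1$ is of order $((x-\tilde\eta_1)/(y-\tilde\eta_1-1))^{2/3}$, so after undoing the time rescaling one obtains a time of order $x^{2/3}$ uniformly in $y\ge x$. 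Integrating against the law of $\tilde X_1$ conditioned on $\tilde X_1>x$ yields \eqref{up31}.

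For \eqref{up32}, note that $a_x+1$ is a stopping time for the filtration generated by the i.i.d.\ sequence $(\tilde X_j)_{j\ge1}$, that $\tilde X_j\le x$ for every $j\le a_x$, and that $\tilde X_j^{2/3}\mathbf{1}_{\{\tilde X_j\le x\}}$ is bounded by $x^{2/3}$; Wald's identity therefore gives
\begin{equation*}
\mathbf{E}_\beta\!\Bigl[\sum_{j=1}^{a_x+1}\tilde X_j^{2/3}\mathbf{1}_{\{\tilde X_j\le x\}}\Bigr]=\mathbf{E}_\beta[a_x+1]\cdot\mathbf{E}_\beta\!\bigl[\tilde X_1^{2/3}\mathbf{1}_{\{\tilde X_1\le x\}}\bigr].
\end{equation*}
Standard renewal theory in the $1/3$-stable domain yields $\mathbf{E}_\beta[a_x+1]\le Cx^{1/3}$, while the tail estimate gives $\mathbf{E}_\beta[\tilde X_1^{2/3}\mathbf{1}_{\{\tilde X_1\le x\}}]\le C\sum_{n=1}^x n^{-2/3}\le Cx^{1/3}$; together with the pointwise bound $\sum_{j=1}^{a_x}\tilde X_j^{2/3}\le\sum_{j=1}^{a_x+1}\tilde X_j^{2/3}\mathbf{1}_{\{\tilde X_j\le x\}}$, this proves \eqref{up32}. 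The main obstacle will be \eqref{up31}: turning the heuristic ``$R$ reaches $x$ in time of order $x^{2/3}$ within a pseudo-excursion of area at least $x$'' into a rigorous bound in expectation requires uniform (in the conditioning parameter $y\ge x$) integrability for first-passage times of the area process of conditioned excursions, which goes beyond the distributional convergence supplied by \cite[Theorem~A]{CarPet17b}.
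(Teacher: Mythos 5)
Your treatment of \eqref{up32} is correct and is essentially the paper's own argument (optional stopping/Wald at the stopping time $a_x+1$, plus $\mathbf{E}_\beta[\tilde X_1^{2/3}\ind_{\{\tilde X_1\le x\}}]\le Cx^{1/3}$ and $\mathbf{E}_\beta[a_x]\le Cx^{1/3}$ from the local estimate \eqref{recall46}). The problems are with the other two inequalities, and they share the same root: you repeatedly try to extract \emph{expectation} bounds from purely \emph{distributional} statements. For \eqref{up3}, your Bayes argument for $\mathbf{E}_\beta[\tilde\eta_1\mid\tilde X_1=x]\le C$ is fine, but the key ingredient (b), $\mathbf{E}_\beta[\eta_1-\tilde\eta_1\mid X^{\mathrm{exc}}_1=m]\le Cm^{2/3}$, does not follow from the tightness of $\tau/\ell^{2/3}$ in \cite[Proposition~3.4]{CarPet17b}: tightness (or convergence in law) of the rescaled extension gives no control on its first moment without a uniform integrability input. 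What you are asserting in (b) is exactly the content of \cite[Lemma 4.11]{CP15}, and this is how the paper disposes of \eqref{up3}: it transfers that conditional first-moment bound from $(\mathfrak N_1,X_1)$ to $(\eta_1,\tilde X_1)$ via the decompositions $X_1=\tilde{\mathfrak N}_1+Z_1$, $\tilde X_1=\tilde\eta_1+\tilde Z_1$, the equality in law of $(\mathfrak N_1-\tilde{\mathfrak N}_1,Z_1)$ and $(\eta_1-\tilde\eta_1,\tilde Z_1)$, the geometric tails of the prefixes, and \eqref{recall46}. So \eqref{up3} is repairable, but as written the justification is invalid.

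The genuine gap is \eqref{up31}, and you concede it yourself: the statement you need, a bound of order $x^{2/3}$ for $\mathbf{E}_\beta[\hat\tau_x\mid\tilde X_1=y]$ uniform in $y\ge x$, cannot be obtained from the scaling limit of \cite[Theorem~A]{CarPet17b}, both because convergence in law of the rescaled first-passage time gives no moment bound and because for $y\gg x$ you would need uniform control of the conditioned excursion near its starting point, at area fractions $x/y\to 0$. This is precisely where the paper does real work: it never conditions on $\{\tilde X_1=y\}$, but instead bounds $\mathbf{E}_\beta[\hat\tau_x\,\ind_{\{\tilde X_1>x\}}]\le Cx^{1/3}$ (which suffices since $\mathbf{P}_\beta(\tilde X_1>x)\asymp x^{-1/3}$), reduces this via the stopping time $\overline\tau$ and the prefix/excursion decomposition to $\mathbf{E}_\beta[\xi_x\,\ind_{\{X_1>x\}}\ind_{\{U_1>0\}}]\le cx^{1/3}$ (see \eqref{tbpl}--\eqref{bornexcdep}), and then handles the delicate term $\mathcal T_{2,x}$ by a path surgery: a last-passage decomposition at level $x^{1/3}$, reflection of the overshoot, and concatenation with a continuation that, by Donsker, has uniformly positive probability of closing the excursion with total area in $[x/2,3x]$. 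This converts the constraint $\{X_1>x\}$ into $\{X_1\in[x/2,3x]\}$, where the conditional-length moment bound of \cite[Lemma 4.11]{CP15} applies and yields the $x^{1/3}$ order. Without an argument of this type (or some other uniform-integrability/moment estimate for first-passage times of the area process), your proof of \eqref{up31}, and hence of the Claim, is incomplete.
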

\begin{proof}
We need to introduce some more notations to prove those three inequalities.
We recall \eqref{deuxx} and we note that, 
under ${\bf P}_{\beta}$ the first excursion $(V_0,\dots,V_{\mathfrak{N}_1-1})$ can also be  divided into  two independent processes, i.e., 
$(V_0,\dots,V_{\tilde{\mathfrak{N}}_1})$ and $(V_{\tilde{\mathfrak{N}}_1},\dots, V_{\mathfrak{N}_1-1})$ 
with
\be{deftii2}
\tilde{\mathfrak{N}}_1:=\max\{i\geq 1\colon\, V_1=V_2=\dots=V_i=0\}.
\ee
We can therefore rewrite $X_1=\tilde{\mathfrak{N}}_1+Z_1$ and $\tilde X_1=\tilde \eta_1+\tilde Z_1$ with
\begin{align}
Z_1&:=\mathfrak{N}_1-\tilde{\mathfrak{N}}_1+\sum_{i=\tilde{\mathfrak{N}}_1+1}^{\mathfrak{N}_1-1} |V_i| \quad \text{and} \quad \tilde Z_1:=\eta_1-\tilde \eta_1+\sum_{i=\tilde\eta_1+1}^{\eta_1-1} 
V_i-V_{\tilde \eta_1}
\end{align}
We observe  that $\tilde{\mathfrak{N}}_1$ and  $\tilde \eta_1$  both follow a geometric  law on $\N_0$ with parameter ${\bf P}_\beta(U_1=0)$ and 
${\bf P}_\beta(U_1 \leq 0)$ respectively.
Moreover, $(\mathfrak{N}_1-\tilde{\mathfrak{N}}_1,Z_1)$ and $(\eta_1-\tilde{\eta}_1,\tilde Z_1)$ have exactly the same law since $(|V_{\tilde{\mathfrak{N}}_1}|,\dots,|V_{\mathfrak{N}_1-1}|)$
and  $(V_{\tilde{\eta}_1}-V_{\tilde{\eta}_1},\dots,V_{\eta_1-1}-V_{\tilde{\eta}_1})$ themselves have the same law which can be seen also as the law of $(V_0,\dots,V_{\mathfrak{N}_1-1})$ under 
${\bf P}_\beta(\cdot\, |\, V_1>0)$.

We will also need the fact  that  there exists a $c>0$ such that  
\be{recall46}
{\bf P}_\beta(Y=x)=\frac{c}{x^{1/3}}(1+o(1)), \quad \text{for}\ \  Y\in  \{ X_1,\tilde X_1, Z_1,\tilde Z_1\}\ \ \text{and}\ \  x\in \N.  
\ee 
To prove \eqref{recall46}, we first observe that the local central limit theorem obtained in \cite[Lemma 4.6]{CP15}  (which remains true under ${\bf P}_{\beta}$)
is exactly  \eqref{recall46} for $X_1$.  From this,  we deduce that  \eqref{recall46} is true with $Z_1$ as well by using 
$${\bf P}_\beta(Y=x)=\frac{1}{2\pi} \int_{0}^{2\pi} \phi_{Y}(t) \, e^{-ixt} dt, \quad x\in \N, \quad Y\in \{X_1,Z_1\}$$
 ($\phi_{Y}$ being the characteristic function of $Y$) in combination with the fact that $X_1=\tilde{\mathfrak{N}}_1+Z_1$
and that $\tilde{\mathfrak{N}}_1$ has a geometric law and is independent of $Z_1$. As a consequence, \eqref{recall46} holds true for $\tilde Z_1$
also since $Z_1$ and $\tilde Z_1$ have the same law.   Finally, the fact that $\tilde X_1=\tilde \eta_1+\tilde Z_1$, that $\tilde \eta_1$ has a geometric law and is independent of $\tilde Z_1$
allows us to conclude that \eqref{recall46} is satisfied for $\tilde X_1$ as well.
\smallskip

For the sake of conciseness, we will not display the details of the proof of  \eqref{up3}. The reason is that the same inequality is proven in \cite[Lemma 4.11]{CP15}  with $\mathfrak N_1,X_1$ instead of $\eta_1,\tilde X_1$. Then, the equalities 
$X_1=\tilde{\mathfrak{N}}_1+Z_1$ and $\tilde X_1=\tilde \eta_1+\tilde Z_1$, the equality in law of  $(\mathfrak{N}_1-\tilde{\mathfrak{N}}_1,Z_1)$ and $(\eta_1-\tilde{\eta}_1,\tilde Z_1)$, the fact that $\tilde\eta_1$ and $\tilde{\mathfrak{N}}_1$
have geometric laws (and therefore light tails) and \eqref{recall46} are sufficient deduce \eqref{up3} from \cite[Lemma 4.11]{CP15}.
\smallskip

We continue with \eqref{up31}. We will assume for simplicity and in this proof only that $x^{1/3}\in 2\N$.  The case where 
$x^{1/3}\notin 2\N$ is taken care of similarly. We use \eqref{recall46} to claim that there exists a $c>0$ such that 
${\bf P}_\beta(\tilde X_1>x)=\frac{c}{x^{1/3}}(1+o(1))$
therefore, the proof of \eqref{up31} will be complete once we show that there exists a $C>0$ such that
\be{tbpl}
{\bf E}_\beta(\hat \tau_x \,  \ind_{\{\tilde X_1>x\}})\leq C x^{1/3},\quad x\in \N.
\ee
We introduce for every $y\in \N$ the stopping time 
\be{tautche}
\overline \tau_y:=\inf\Big\{j\geq 1\colon\, j+\sum_{i=1}^j V_{\tilde \eta_1+i}-V_{\tilde \eta_1}\geq y \Big\}
\ee
so that under the event  $\{\tilde X_1>x\}$ we have  $\hat \tau_x\leq \tilde \eta_1 \,  \ind_{\{\tilde \eta_1\geq x\}}
+(\tilde \eta_1+\overline \tau_{x-\tilde\eta_1}) \,  \ind_{\{\tilde \eta_1< x\}}$ and it allows us to rewrite \eqref{tbpl} as
\be{tbpl1}
{\bf E}_\beta\big(\hat \tau_x \,  \ind_{\{\tilde X_1>x\}}\big)\leq   {\bf E}_\beta\big(\tilde \eta_1  \big)
+{\bf E}_\beta\big(\overline \tau_{x-\tilde\eta_1} \,  \ind_{\{\tilde \eta_1< x\}}\,   \ind_{\{\tilde X_1>x\}}\big):=M_{1,x}+M_{2,x}.
\ee
The first term $M_{1,x}$ in the r.h.s. of \eqref{tbpl1} is simply bounded above by  $ {\bf E}_\beta\big(\tilde \eta_1)$ which is finite since 
$\tilde \eta_1$ has a geometric law.  Therefore, it remains to control $M_{2,x}$ and since $\tilde \eta_1$ is independent of $(V_{\tilde{\eta}_1}-V_{\tilde{\eta}_1},\dots,V_{\eta_1-1}-V_{\tilde{\eta}_1})$ 
which (as explained above) has the same law as $(V_i)_{i=0}^{\mathfrak{N}_1-1}$ under ${\bf P}_\beta(\cdot\,  |\, V_1>0)$ we 
simply rewrite (recall \eqref{taul})
\begin{align}\label{wrtpn}
\nonumber M_{2,x}&=\sum_{k=0}^{x-1} {\bf P}_\beta(\tilde \eta_1=k)\  {\bf E}_\beta\big(\overline \tau_{x-k}\,  \ind_{\{\tilde Z_1>x-k\}}\big) \\
&=\sum_{k=0}^{x-1} {\bf P}_\beta(\tilde \eta_1=k)\  {\bf E}_\beta\big(\xi_{x-k}\,  \ind_{\{X_1>x-k\}}\, |\, V_1>0\big).
\end{align}
From \eqref{wrtpn}, we deduce that the proof of \eqref{up31} will be complete once we show that there exists a $c>0$ such that 
\be{bornexcdep}
{\bf E}_\beta(\xi_x  \ind_{\{X_1>x\}}\, \ind_{\{U_1>0\}})\leq c x^{1/3},\quad x\in \N.\\
\ee

\noindent To prove \eqref{bornexcdep}, we write
${\bf E}_\beta(\xi_x  \ind_{\{X_1>x\}} \, \ind_{\{U_1>0\}})\leq \mathcal T_{1,x}+\mathcal T_{2,x}$
with 
\begin{align}
\mathcal T_{1,x}&={\bf E}_\beta\big[\xi_x \,  \ind_{\{V_{\xi_x}\leq 2 x^{1/3}\}}\,  \ind_{\{U_1>0\}} \,  \ind_{\{X_1>x\}}\big],\\
\nonumber \mathcal T_{2,x}&={\bf E}_\beta\big[\xi_x \,  \ind_{\{V_{\xi_x}> \frac{3}{2} x^{1/3}\}}\, \ind_{\{U_1>0\}} \, \ind_{\{X_1>x\}}\big].
\end{align}

The terms $\mathcal T_{1,x}$ and $\mathcal T_{2,x}$ are taken care of in a similar manner, but proving that  there exists a $c>0$ such that   $\mathcal T_{2,x}\leq c x^{1/3}$ for every $x\in \N$ is harder than proving the same inequality for $\mathcal T_{1,x}$. For this reason and for conciseness, we will only deal with  $\mathcal T_{2,x}$  here. 

  We let $L_{x^{1/3}}$ and $\tilde L_{\frac{3}{2}x^{1/3}}$ be the last time at which $V$
crosses $x^{1/3}$ before time $\xi_x$ and the first time (after $L_{x^{1/3}}$) at which $V$ crosses $\frac{3}{2}x^{1/3}$, i.e., 
\begin{align}\label{defTx}
L_{x^{1/3}}&:=\max\{j\in \{0,\dots,\xi_x\}\colon\, V_j\leq x^{1/3}\}\\
\nonumber \tilde L_{\frac{3}{2} x^{1/3}}&:=\min\{j\geq T_{x^{1/3}}+1\colon\, V_j\leq \tfrac{3}{2} x^{1/3}, V_{j+1}> \tfrac{3}{2} x^{1/3} \}.
\end{align}
We observe that $\xi_x-L_{x^{1/3}}\leq x^{2/3} $ since after $L_{x^{1/3}}$ the trajectory remains above $x^{1/3}$ up to time $\xi_x$. Therefore,
there exists a $C>0$ such that 
${\bf E}_\beta\big[(\xi_x-L_{x^{1/3}}) \,  \ind_{\{U_1>0\}} \, \ind_{\{X_1>x\}}\big]\leq C x^{1/3}$ for every $x\in \N$ and we can safely substitute $L_{x^{1/3}}$ to 
$\xi_x$ in the definition of $\mathcal{T}_{2,x}$. We define also 
\begin{align}\label{defOO}
\nonumber \cO_x:=\big\{(j_1,j_2,y_1,y_2,z_1,z_2)\in (\N_0)^6\colon\, & j_1< j_2\leq x,\  \ z_1\leq z_2\leq x\ \  \text{and}\ \   y_1,y_2\in \big(x^{1/3},\tfrac{3}{2} x^{1/3}\big]\big\} 
\end{align}
and we split the expectation defining $\mathcal{T}_{2,x}$ depending on the values $(j_1,j_2)$ taken by $(T_{x^{1/3}},\tilde  T_{\frac{3}{2} x^{1/3}})$ and $(y_1,y_2)$ taken by 
$(V_{j_1+1},V_{j_2})$ and also $(z_1,z_2)$ taken by $(K_{j_1},K_{j_2})$. Thus, 
\be{eqmast}
\mathcal T_{2,x}=\sum_{(j_1,j_2,y_1,y_2,z_1,z_2)\in \cO_x}\   \sum_{y_3>\frac{3}{2} x^{1/3}} j_1 \, 
{\bf P_\beta}(W_{j_1,y_1,z_1} \cap I_{j_1,j_2,y_2,z_2} \cap J_{j_2,y_3}),
\ee
with 
\begin{align}
W_{j_1,y_1,z_1}&=\big\{V_i>0, \forall i \in \{1,\dots,j_1\}, \   V_{j_1}\leq x^{1/3},\, 
K_{j_1}=z_1,\  V_{j_1+1}=y_1\big\},\\
\nonumber I_{j_1,j_2,y_2,z_2}&=\big\{ x^{1/3}<V_i\leq  \tfrac{3}{2} x^{1/3},\,  \forall i\in \{j_1+1,\dots,j_2\},\,  V_{j_2}=y_2,\,  K_{j_2}=z_2\big\},\\
\nonumber J_{j_2,y_3}&=\big\{ V_{j_2+1}=y_3,  V_i> x^{1/3},\,  \forall i\in \{j_2+1,\dots,\xi_x\},\,  V_{\xi_x}>\tfrac{3}{2} x^{1/3}\big\}.
\end{align}
Provided we change the equality into an inequality,  we can safely restrict the event $J_{j_2}$ in the r.h.s. of \eqref{eqmast} to $\{V_{j_2+1}=y_3\}$. Then, we apply Markov property at time $j_1+1$
and $j_2$
and we obtain 
\begin{align}\label{eqmast2}
\mathcal T_{2,x}\leq \sum_{(j_1,j_2,y_1,y_2,z_1,z_2)\in \cO_x}\   \sum_{y_3>\frac{3}{2} x^{1/3}} j_1 \, 
&{\bf P_\beta}(W_{j_1,y_1,z_1})\  {\bf P}_{\beta,y_1} \big(\tilde I_{j_1,j_2,y_2,z_1,z_2}\big)\ 
{\bf P}_{\beta,y_2} (V_1=y_3),
\end{align}
with 
\be{defiti}
\tilde I_{j_1,j_2,y_2,z_1z_2}=\big\{x^{1/3}<V_i\leq  \tfrac{3}{2} x^{1/3},\,  \forall i\leq j_2-j_1-1,\,  V_{j_2-j_1-1}=y_2,\,  K_{j_2-j_1-1}=z_2-z_1-1-y_1 \big\}.
\ee
At this stage, we map every trajectory $(V_i)_{i=0}^{j_1+1}$ taken account in $W_{j_1,y_1,z_1}$ onto an associated path which  equals 
$V$ up to time $j_1$, then touches 
$x^{1/3}$ at time  $j_1+1$ and is  equal to $2 x^{1/3}-y_1$ at time $j_1+2$ (i.e. we reflect $V_{j_1+1}$ with respect to $x^{1/3}$ to obtain the position of the image of 
$V$ at time $j_1+2$). Thus, we obtain a new set 
\be{altWti}
\tilde W_{j_1,y_1,z_1}
=\big\{V_i>0, \forall i \in \{1,\dots,j_1\}, \   V_{j_1}\leq x^{1/3},\, 
K_{j_1}=z_1,\ V_{j_1+1}=x^{1/3},  V_{j_1+2}=2 x^{1/3}-y_1\big\},
\ee
so that ${\bf P_\beta}(W_{j_1,y_1,z_1})=c_\beta {\bf P_\beta}(\tilde W_{j_1,y_1,z_1})$. We also reflect every
piece of trajectory in $\tilde I_{j_1,j_2,y_2,z_1,z_2}$ with respect to $x^{1/3}$ and we denote by $\widehat I_{j_1,j_2,y_2,z_1,z_2}$
the set containing the resulting paths, thus 
\begin{align}\label{unemajotra}
 {\bf P}_{\beta,y_1} \big(\tilde I_{j_1,j_2,y_2,z_1,z_2}\big)=  {\bf P}_{\beta,\, 2 x^{1/3}-y_1} \big(\widehat I_{j_1,j_2,y_2,z_1,z_2} \big)
\end{align}
and
$$\widehat I_{j_1,j_2,y_2,z_1,z_2}\subset \big\{ \tfrac{1}{2} x^{1/3}\leq   V_i<x^{1/3}, \  \forall i \leq j_2-j_1-1,\ 
 V_{j_2-j_1-1}=2x^{1/3}-y_2, \, 
  K_{j_2-j_1-1}\leq z_2-z_1\big\}.$$
With the help of (\ref{altWti}--\ref{unemajotra}) and by  summing over $y_3$ in \eqref{eqmast2} we obtain
\begin{align}\label{eqmast3}
\mathcal T_{2,x}\leq c_\beta \sum_{(j_1,j_2,y_1,y_2,z_1,z_2)\in \cO_x}\  j_1 \, 
&{\bf P_\beta}(\tilde W_{j_1,y_1,z_1})\  {\bf P}_{\beta,2x^{1/3}-y_1} \big(\widehat I_{j_1,j_2,y_2,z_1,z_2}\big)\ 
{\bf P}_{\beta} (U_1> \tfrac{3}{2} x^{1/3}-y_2).
\end{align}
Since $U_1$ has a geometric law, there exists a $c>0$ such that
${\bf  P}_{\beta} (U_1> y)=c {\bf  P}_{\beta} (U_1=y)$  for every $y\geq 0$. Therefore, there exists a constant $c>0$ such that we can substitute 
${\bf P}_{\beta} (U_1= y_2-\tfrac{3}{2} x^{1/3})$
to ${\bf P}_{\beta} (U_1> \tfrac{3}{2} x^{1/3}-y_2)$ in the r.h.s. in \eqref{eqmast3} and write
\begin{align}\label{eqmast4}
\mathcal T_{2,x}\leq c \sum_{(j_1,j_2,y_1,y_2,z_1,z_2)\in \cO_x}\  j_1 \, 
&{\bf P_\beta}(\tilde W_{j_1,y_1,z_1})\  {\bf P}_{\beta,2x^{1/3}-y_1} \big(\widehat I_{j_1,j_2,y_2,z_1,z_2} \cap \{V_{j_2-j_1}=\tfrac{1}{2} x^{1/3}\} ).
\end{align}
At this stage, we let $C_{j_1,j_2,y_1,y_2,z_1,z_2}$ be the set of paths obtained by concatenating trajectories in $\tilde W_{j_1,y_1,z_1}$ and in $\widehat I_{j_1,j_2,y_2,z_1,z_2} \cap \{V_{j_2-j_1}=\tfrac{1}{2} x^{1/3}\}$, i.e., 
\begin{align}\label{defCij}
\nonumber C_{j_1,j_2,y_1,y_2,z_1,z_2}\subset\{V_i>0, &\forall i \in \{1,\dots,j_1\}, V_{j_1}\leq x^{1/3}, K_{j_1}=z_1, V_{1+j_1}=x^{1/3}, V_{2+j_1}=2x^{1/3}-y_1    \\
\nonumber & \tfrac{1}{2} x^{1/3}\leq V_i <x^{1/3}, \forall i\in \{2+j_1,\dots,j_2\}, V_{j_2+1}=2 x^{1/3}-y_2, \\
&K_{1+j_2}\leq z_2+ x^{1/3}, V_{j_2+2}=\tfrac{1}{2} x^{1/3}\},
\end{align} 
and it is fundamental to note the $C_{j_1,j_2,y_1,y_2,z_1,z_2}$ with $(j_1,j_2,y_1,y_2,z_1,z_2)\in \cO_x$ are disjoint. The final step of this proof consists 
in attaching at the end of every path in a given $C_{j_1,j_2,y_1,y_2,z_1,z_2}$ another path which will reach the lower half-plane in such a way that 
the area swept by the whole excursion belongs to $[\frac{x}{2}, 2x]$.
We continue this computation by noticing again that by Donsker Theorem 
\be{applido}
\lim_{x\to \infty} {\bf P}_{\beta,\frac12 x^{1/3}}(V_i<x^{1/3}, \forall i\leq \tau, K_\tau\in [\tfrac{x}{2},x])={\bf P}_{B_0=\frac{1}{2}}(B_s<1,\, \forall s\in [0,\tau_B], \,A_{\tau_B}\in [\tfrac{1}{2},1])>0,
\ee
and therefore we can assert that there exists a $c>0$ such that for every $x\in \N$, it holds that
\begin{align}\label{eqmast5}
\mathcal T_{2,x}&\leq c \sum_{(j_1,j_2,y_1,y_2,z_1,z_2)\in \cO_x}\  j_1 \, 
{\bf P_\beta}(C_{j_1,j_2,y_1,y_2,z_1,z_2})\, \,  {\bf P}_{\beta,\frac12 x^{1/3}}\big(V_i<x^{1/3}, \forall i\leq \tau,\,  K_\tau\in \big[\tfrac{x}{2},x\big]\big)\\
\nonumber &\leq c  \, {\bf E}_{\beta}\Big [ T_{\text{max},x^{1/3}} \, \ind_{\{U_1>0\}} \,  \ind_{\big\{X_1\in \big[\tfrac{x}{2}, 3x\big]\big\}}\Big ]\leq c\,   {\bf E}_{\beta}\Big[\mathfrak{N}_1 \,  \ind_{\big\{X_1\in \big[\tfrac{x}{2}, 3x\big]\big\}}\Big]
\end{align}
with $T_{\text{max},x^{1/3}}:=\max\{i\leq \mathfrak{N}_1 \colon V_i\geq x^{1/3}\}$. This provides us the expected upper bound on $\mathcal{T}_{2,x}$ and the proof of \eqref{up31} is complete.

\end{proof}

We conclude this section with the proof of  \eqref{up32}.
To begin with we recall the definition of $a_x$ in \eqref{deftii} and  we note that by definition of $a_x$  
\be{bornesimp}
{\bf E}_\beta\Bigg[\sum_{j=1}^{a_x}\,        \tilde X_j^{\, 2/3}\bigg]={\bf E}_\beta\Bigg[\sum_{j=1}^{a_x}\,        \tilde X_j^{\, 2/3}  \, 
\ind_{\{\tilde X_j\leq x\}}\bigg]\leq {\bf E}_\beta\Bigg[\sum_{j=1}^{1+a_x}\,        \tilde X_j^{\, 2/3}  \, 
\ind_{\{\tilde X_j\leq x\}}\bigg]. 
\ee
Since $1+a_x$ is a bounded stopping time with respect to the filtration $(\sigma(\tilde X_1,\dots,\tilde X_n))_{n\geq 1}$ and since 
$\big(\sum_{i=1}^n \tilde X_i \, \ind_{\{\tilde X_j\leq x\}}-n {\bf E}_{\beta,\mu_\beta}\big(\tilde X_1 \,  \ind_{\{\tilde X_1\leq x\}}\big)\big)_{n\geq 1}$ is a martingale, 
we can rewrite  \eqref{bornesimp} as
\be{membdroi}
{\bf E}_\beta\Bigg[\sum_{j=1}^{a_x}\,        \tilde X_j^{\, 2/3} \,  \ind_{\{\tilde X_j\leq x\}}\bigg]\leq {\bf E}_{\beta}(1+a_x)\  {\bf E}_{\beta} \Big(\tilde X_1^{2/3} \,  \ind_{\{\tilde X_1\leq x\}}\Big). 
\ee
A straightforward computation with the help of \eqref{recall46} guaranties that there exists a $c>0$ such that  
$ {\bf E}_{\beta} \Big(\tilde X_1^{2/3}  \ind_{\{\tilde X_1\leq x\}}\Big)\leq c x^{1/3}$ for every $x\geq 1$. Then, it remains to 
use \eqref{recall46} for $\tilde X_1$ to conclude that  exists a $c>0$ such that  ${\bf E}_{\beta}(a_x)\leq c x^{1/3}$ for every $x\in \N$.

\subsection{Proof of Proposition ~\ref{propfin}} \label{prpropfin}

We shall prove first the second limit.
Thanks to the independence of $D$ and $B$ the process
$$ N(s):=D(s) -\tilde{D}^k(s) = \int_0^{a_s} \un{u\notin \Gamma_k}\,
dD_u$$
is an $L^2$ martingale, and by Doob maximal inequality
\begin{align*}
  \prob{\sup_{s\in\etc{0,1}}\valabs{N(s)} \ge \epsilon}& \le
  \frac{C}{\epsilon^2} \esp{N(1)^2} \\
&= \frac{C}{\epsilon^2} \int_0^\infty \prob{u \notin \Gamma_k, u <
  a_1}\, du
\end{align*}

Since the excursion process of Brownian motion is sigma-finite, we
have forall $u$ : $\prob{u \notin \Gamma_k} \to 0$. Therefore we can
  conclude by dominated convergence if we can prove that $\esp{a_1}<
  +\infty$. This is indeed true since $Y_t = \valabs{B}_{a(t)}$
  conditioned by $Y(1)=0$ is distributed as 
  $\etp{\frac32 \rho_t}^{2/3}$ with $(\rho_t)_{0\le t\le 1}$ a Bessel
    bridge of dimension $\delta=4/3$. Therefore,
    \begin{align*}
      \esp{a_1} &= \esp{\int_0^1 \frac{ds}{Y_s}}= C \int_0^1 \esp{\rho_t^{-2/3}}\, dt
    \end{align*}
Since $\rho_t$ has density
$$ C_\delta  (t(1-t))^{-\delta/2} \exp(- \frac{x^2}{2}
(\unsur{t} + \unsur{1-t}))\,,$$
we see that, by symmetry, 
\begin{align*}
  \esp{a_1} &\le C  \int_0^{\undemi}  t^{-\delta/2}\etp{ \int_0^\infty
               x^{-2/3} e^{-\frac{x^2}{2t}}}\, dt  <+\infty \,.
\end{align*}

We now prove the first limit of Proposition ~\ref{propfin}. Given $\eta>0$, since $\esp{a_1} <
  +\infty$ there exists $A=A(\eta)$ such that 
$$ \prob{a_1 \ge A} \le \undemi \eta\,.$$
By Kolmogorov's Continuity Criterion given $0< \gamma < \undemi$ the
random variable
$$ K := \sup_{0\le s<t<A} \frac{\valabs{{B}_t
    -{B}_s}}{\valabs{t-s}^\gamma}$$
 has a small moment : there exists $\delta>0$ such that $\esp{K} <
 +\infty$.

If $s\le 1$ then, on $\ens{a_1 < A}$,
\begin{align*}
 \valabs{\tilde{B}(s) - \tilde{B}^k(s)} &=
\valabs{{B}(a_s)} \un{a_s \notin\Gamma_k}\\
&= \valabs{{B}(a_s-{B}(g(a_s)} \un{a_s
  \notin\Gamma_k}\\
&\le \valabs{a_s - g(a_s)}^\gamma K \un{a_s \notin\Gamma_k}\\
&\le \unsur{k^\gamma} K
\end{align*}
Therefore

\begin{align*}
  \prob{ \sup_{s\in [0,1]} \big| \widetilde B(s)-\widetilde
    B^k(s)\big|\geq \gep }&
\le \prob{a_1 \ge A} + \prob{\sup_{s\in [0,1]} \big| \widetilde B(s)-\widetilde
    B^k(s)\big|\geq \gep; a_1 < A} 
\\
&\le \undemi \eta + \prob{K \ge k^\gamma
    \epsilon} \le \undemi \eta + \unsur{(k^\gamma \epsilon)^\delta}
  \esp{K^\delta} < \eta\,,
\end{align*}
for $k$ large enough.

  \bibliographystyle{imsart-nameyear}
  \bibliography{cnp}

\begin{thebibliography}{13}

\bibitem[\protect\citeauthoryear{Billingsley}{2008}]{Bill08}
\begin{bbook}[author]
\bauthor{\bsnm{Billingsley},~\bfnm{Patrick}\binits{P.}}
(\byear{2008}).
\btitle{Convergence of Probability Measures}.
\bpublisher{John Wiley and Sons, Inc.}, \baddress{New York}.
\bdoi{10.1002/9780470316962.fmatter}
\bmrnumber{MR1700749 (2000e:60008)}
\end{bbook}
\endbibitem

\bibitem[\protect\citeauthoryear{Brak, Guttmann and Whittington}{1992}]{BGW92}
\begin{barticle}[author]
\bauthor{\bsnm{Brak},~\bfnm{R.}\binits{R.}},
  \bauthor{\bsnm{Guttmann},~\bfnm{A.~J.}\binits{A.~J.}} \AND
  \bauthor{\bsnm{Whittington},~\bfnm{S.~G.}\binits{S.~G.}}
(\byear{1992}).
\btitle{A collapse transition in a directed walk model}.
\bjournal{J. Phys. A}
\bvolume{25}
\bpages{2437--2446}.
\bmrnumber{1164357 (93a:82025)}
\end{barticle}
\endbibitem

\bibitem[\protect\citeauthoryear{Caravenna and Deuschel}{2009}]{CD09}
\begin{barticle}[author]
\bauthor{\bsnm{Caravenna},~\bfnm{Francesco}\binits{F.}} \AND
  \bauthor{\bsnm{Deuschel},~\bfnm{Jean-Dominique}\binits{J.-D.}}
(\byear{2009}).
\btitle{Scaling limits of {$(1+1)$}-dimensional pinning models with {L}aplacian
  interaction}.
\bjournal{Ann. Probab.}
\bvolume{37}
\bpages{903--945}.
\bdoi{10.1214/08-AOP424}
\bmrnumber{2537545 (2010j:60237)}
\end{barticle}
\endbibitem

\bibitem[\protect\citeauthoryear{Caravenna, Sun and Zygouras}{}]{CSZ}
\begin{bunpublished}[author]
\bauthor{\bsnm{Caravenna},~\bfnm{F.}\binits{F.}},
  \bauthor{\bsnm{Sun},~\bfnm{R.}\binits{R.}} \AND
  \bauthor{\bsnm{Zygouras},~\bfnm{N.}\binits{N.}}
\btitle{The continuum disordered pinning model}.
\bnote{\url{http://arxiv.org/abs/1406.5088} (math.PR), submitted}.
\end{bunpublished}
\endbibitem

\bibitem[\protect\citeauthoryear{Carmona, Nguyen and
  P\'etr\'elis}{2016}]{CNGP13}
\begin{barticle}[author]
\bauthor{\bsnm{Carmona},~\bfnm{Philippe}\binits{P.}},
  \bauthor{\bsnm{Nguyen},~\bfnm{Gia~Bao}\binits{G.~B.}} \AND
  \bauthor{\bsnm{P\'etr\'elis},~\bfnm{Nicolas}\binits{N.}}
(\byear{2016}).
\btitle{Interacting partially directed self avoiding walk. From phase
  transition to the geometry of the collapsed phase}.
\bjournal{Ann. Probab.}
\bvolume{44}
\bpages{3234--3290}.
\bdoi{10.1214/15-AOP1046}
\end{barticle}
\endbibitem

\bibitem[\protect\citeauthoryear{Carmona and {Pétrélis}}{2016}]{CP15}
\begin{barticle}[author]
\bauthor{\bsnm{Carmona},~\bfnm{Philippe}\binits{P.}} \AND
  \bauthor{\bsnm{{Pétrélis}},~\bfnm{Nicolas}\binits{N.}}
(\byear{2016}).
\btitle{Interacting partially directed self avoiding walk : scaling limits}.
\bvolume{21}
\bpages{no. 49, 1--52 (electronic)}.
\bnote{Electronic Journal of Probability}.
\bdoi{10.1214/16-EJP4618}
\end{barticle}
\endbibitem

\bibitem[\protect\citeauthoryear{Carmona and {Pétrélis}}{2017}]{CarPet17b}
\begin{bunpublished}[author]
\bauthor{\bsnm{Carmona},~\bfnm{Philippe}\binits{P.}} \AND
  \bauthor{\bsnm{{Pétrélis}},~\bfnm{Nicolas}\binits{N.}}
(\byear{2017}).
\btitle{Limit theorems for random walk excursion conditioned to have a typical
  area}.
\bnote{\url{https://arxiv.org/abs/1709.06448}}.
\end{bunpublished}
\endbibitem

\bibitem[\protect\citeauthoryear{Denisov, Kolb and Wachtel}{2015}]{DKW13}
\begin{barticle}[author]
\bauthor{\bsnm{Denisov},~\bfnm{Denis}\binits{D.}},
  \bauthor{\bsnm{Kolb},~\bfnm{Martin}\binits{M.}} \AND
  \bauthor{\bsnm{Wachtel},~\bfnm{Vitali}\binits{V.}}
(\byear{2015}).
\btitle{Local asymptotics for the area of random walk excursions}.
\bjournal{J. Lond. Math. Soc. (2)}
\bvolume{91}
\bpages{495--513}.
\bdoi{10.1112/jlms/jdu078}
\bmrnumber{3355112}
\end{barticle}
\endbibitem

\bibitem[\protect\citeauthoryear{Deuschel, Giacomin and Zambotti}{2005}]{DGZ05}
\begin{barticle}[author]
\bauthor{\bsnm{Deuschel},~\bfnm{J.~D.}\binits{J.~D.}},
  \bauthor{\bsnm{Giacomin},~\bfnm{G.}\binits{G.}} \AND
  \bauthor{\bsnm{Zambotti},~\bfnm{L.}\binits{L.}}
(\byear{2005}).
\btitle{Scaling limits of equilibrium wetting models in (1 + 1)-dimension}.
\bjournal{Probab. Theory Related Fields}
\bvolume{132}.
\bdoi{10.1007/s00440-004-0401-8}
\bmrnumber{2198199 (2007f:60080)}
\end{barticle}
\endbibitem

\bibitem[\protect\citeauthoryear{Durrett}{2010}]{RD05}
\begin{bbook}[author]
\bauthor{\bsnm{Durrett},~\bfnm{Rick}\binits{R.}}
(\byear{2010}).
\btitle{Probability: theory and examples},
\bedition{fourth} ed.
\bseries{Cambridge Series in Statistical and Probabilistic Mathematics}.
\bpublisher{Cambridge University Press}, \baddress{Cambridge}.
\bmrnumber{2722836 (2011e:60001)}
\end{bbook}
\endbibitem

\bibitem[\protect\citeauthoryear{Nguyen and P{\'e}tr{\'e}lis}{2013}]{NGP13}
\begin{barticle}[author]
\bauthor{\bsnm{Nguyen},~\bfnm{Gia~Bao}\binits{G.~B.}} \AND
  \bauthor{\bsnm{P{\'e}tr{\'e}lis},~\bfnm{Nicolas}\binits{N.}}
(\byear{2013}).
\btitle{A variational formula for the free energy of the partially directed
  polymer collapse}.
\bjournal{J. Stat. Phys.}
\bvolume{151}
\bpages{1099--1120}.
\bdoi{10.1007/s10955-013-0748-2}
\bmrnumber{3063498}
\end{barticle}
\endbibitem

\bibitem[\protect\citeauthoryear{Sohier}{2013}]{S13}
\begin{barticle}[author]
\bauthor{\bsnm{Sohier},~\bfnm{Julien}\binits{J.}}
(\byear{2013}).
\btitle{The scaling limits of a heavy tailed Markov renewal process}.
\bjournal{Ann. Inst. H. Poincar\'e Probab. Statist.}
\bvolume{49}
\bpages{483--505}.
\bdoi{10.1214/11-AIHP456}
\end{barticle}
\endbibitem

\bibitem[\protect\citeauthoryear{Zwanzig and Lauritzen}{1968}]{ZL68}
\begin{barticle}[author]
\bauthor{\bsnm{Zwanzig},~\bfnm{R.}\binits{R.}} \AND
  \bauthor{\bsnm{Lauritzen},~\bfnm{J.~I.}\binits{J.~I.}}
(\byear{1968}).
\btitle{Exact calculation of the partition function for a model of two
  dimensional polymer crystallization by chain folding}.
\bjournal{J. Chem. Phys.}
\bvolume{48}
\bpages{3351}.
\end{barticle}
\endbibitem

\end{thebibliography}

\end{document}